\documentclass[11pt]{article}

\usepackage[a4paper,margin=1in]{geometry}

\usepackage{amsmath, amssymb, amsthm, bm}
\usepackage{graphicx}
\usepackage{tikz}
\usetikzlibrary{arrows, positioning, automata, shapes.geometric}

\usepackage[ruled,vlined]{algorithm2e}
\usepackage{subcaption}
\usepackage[numbers]{natbib}

\newtheorem{theorem}{Theorem}

\newtheorem{lemma}{Lemma}
\newtheorem{corollary}{Corollary}

\theoremstyle{definition}
\newtheorem{definition}{Definition}
\newtheorem{example}{Example}
\newtheorem{remark}{Remark}
\newtheorem{question}{Question}
\newtheorem{observation}{Observation}

\title{On graph products and multi-word-representability}

\author{
\begin{tabular}{ccc}
Benny George Kenkireth & Gopalan Sajith & Sreyas Sasidharan \\
\texttt{ben@iitg.ac.in} & \texttt{sajith@iitg.ac.in} & \texttt{sreyas.s@iitg.ac.in}
\end{tabular} \\
\\
Department of Computer Science and Engineering \\
Indian Institute of Technology Guwahati, India
}

\date{}

\begin{document}

\maketitle

\begin{abstract}
The multi-word-representation number of a graph $G$, denoted by $\mu(G)$, is the minimum number of word-representable graphs whose union is $G$. We investigate $\mu(H)$ for the graph $H$ obtained from graphs $G_1$ and $G_2$ via one of six fundamental graph products: the lexicographic, Cartesian, rooted, corona, tensor, and strong products. We prove that both the Cartesian and rooted products satisfy $\mu(H)=\max\{\mu(G_1),\mu(G_2)\}$. For the corona product, we establish the bound $\mu(H)\le \max\{\mu(G_1),\mu(G_2)\}+1$ and show that $\mu(H)=\max\{\mu(G_1),\mu(G_2)\}$ when either $\mu(G_1)>\mu(G_2)$ or $G_2$ admits a covering by $\mu(G_2)$ word-representable graphs, at least one of which is a comparability graph. For the lexicographic product, the general upper bound is shown to be $\mu(G_1)+\mu(G_2)$, and it collapses to $\max\{\mu(G_1),\mu(G_2)\}$ when the cover number of $G_2$ by comparability graphs, $\mathrm{cov}_{\mathrm{comp}}(G_2)$, satisfies $\mathrm{cov}_{\mathrm{comp}}(G_2)\le \max\{\mu(G_1),\mu(G_2)\}$. For the lexicographic product of two minimal non-word-representable graphs, we show that $\mu(H) \le 3$. For the tensor and strong products, we establish logarithmic bounds in terms of the chromatic numbers of $G_1$ and $G_2$; in particular, for the strong product, $\max\{\mu(G_1),\mu(G_2)\}\le \mu(H)\le \max\{\mu(G_1),\mu(G_2)\}+\log_3(\min\{\chi(G_1),\chi(G_2)\})$.

We prove that the lexicographic power $G^{[k]}$ is word-representable if and only if $G$ is a comparability graph. We provide the general upper bound $\mu(G^{[k]})\le \mathrm{cov}_{\mathrm{comp}}(G)$ and, for non-comparability word-representable graphs $G$, the bound $\mu(G^{[k]})\le k$. As an application of lexicographic powers, we study the extremal function $\tau(n)$, the largest integer such that every $n$-vertex graph contains a word-representable induced subgraph of order at least $\tau(n)$. By applying lexicographic powers, we obtain the sublinear upper bound $\tau(n)\le n^{\log_8 6+\epsilon}$ for any $\epsilon>0$. Finally, we resolve the Word-representable Bipartition (WB) problem in the negative for all sufficiently large $n$: we prove that for every $n \geq 2593$, there exists a graph of order $n$ that cannot be vertex-partitioned into two word-representable induced subgraphs.

\end{abstract}
\noindent\textbf{Keywords:} Word-representable graphs; multi-word-representability; multi-word-representation number; graph products; lexicographic product; Cartesian product; rooted product; corona product; lexicographic power; tensor product; strong product; word-representable bipartition; comparability graphs.


\section{Introduction}

The notion of word-representable graphs was introduced by Kitaev and Seif~\cite{perkins} and has since attracted considerable attention (see, e.g.,~\cite{kitaev2017comprehensive,newres,onrepgraphs}). A graph $G=(V, E)$ is \emph{word-representable} if there exists a word $w$ over the alphabet $V$ such that for any distinct $x, y \in V$, the edge $xy \in E$ if and only if the letters $x$ and $y$ alternate in $w$. In this case, the word $w$ is said to \emph{represent} the graph $G$.

Word-representable graphs are of particular interest in graph theory, as they encompass several well-known graph families such as $3$-colorable graphs, subcubic graphs, and comparability graphs~\cite{book}. Moreover, they admit an elegant characterization in terms of \emph{semi-transitive orientations}: a graph is word-representable if and only if it admits a semi-transitive orientation~\cite{book}. An orientation of a graph is \emph{semi-transitive} if it is acyclic and contains no shortcuts; that is, there is no directed path $v_1 \to \cdots \to v_k$ together with a directed edge $v_1 \to v_k$ such that some edge $v_i \to v_r$ is missing for $1 \le i < r \le k$ (see Section~\ref{sec:preliminaries} for a formal definition). Comparability graphs, which admit a transitive orientation, form a natural and important subclass of word-representable graphs.

The concept of \emph{multi-word-representability} was introduced by Kenkireth and Malhotra~\cite{bgkaha} as a natural extension of word-representability. A graph $G$ is \emph{$k$-multi-word-representable} if it can be expressed as the union of at most $k$ word-representable subgraphs, where the union is taken over both their vertex and edge sets. Equivalently, $G$ is $k$-multi-word-representable if there exists a collection of at most $k$ words such that each word represents a subgraph of $G$, and the union of these subgraphs equals $G$. The \emph{multi-word-representation number} of a graph $G$, denoted by $\mu(G)$, is the minimum integer $k$ such that $G$ is $k$-multi-word-representable. By definition, a graph is word-representable if and only if $\mu(G)=1$.

The concept of word-representability is inherently binary, classifying graphs into two distinct categories: word-representable and non-word-representable. In the context of multi-word-representability, the parameter $\mu(G)$ refines this classification by distinguishing non-word-representable graphs according to the minimum number of word-representable subgraphs whose union forms $G$. The motivation for studying this parameter was discussed in~\cite{multi-word-rep}, where $\mu(G)$ was interpreted as measuring the structural deviation of a graph from word-representability, with larger values of $\mu(G)$ indicating a greater degree of such deviation. For instance, a graph $G$ with $\mu(G)=3$ exhibits a greater deviation from word-representability than a graph $H$ with $\mu(H)=2$. Thus, $\mu(G)$ provides a finer complexity classification within the class of non-word-representable graphs. More recently, it was shown in~\cite{HEFTY2026106215} that there exist $n$-vertex graphs with $\mu(G)=\Omega(\log\log n)$, establishing that the multi-word-representation number is unbounded as a function of the order of the graph.

In this work, we investigate the multi-word-representation number $\mu(H)$, where $H$ is the product graph formed from the graphs $G_1$ and $G_2$ via one of six fundamental graph products: the lexicographic, Cartesian, rooted, corona, tensor, or strong products. Our study of the lexicographic, Cartesian, rooted, and corona products is motivated by existing characterizations of word-representability for these graph classes. Evaluating $\mu(H)$ for these four products thus provides a natural extension of their known characterizations. In contrast, the corresponding word-representability characterization problems for the tensor and strong products remain open. Notably, Kitaev and Lozin~\cite{book} explicitly raised the question of characterizing the word-representability of tensor products, noting that similar inquiries could be extended to other graph products. Following this, Choi, Kim, and Kim~\cite{choi2019operations} demonstrated that the tensor and strong products do not necessarily preserve word-representability by providing concrete examples of word-representable graphs whose tensor and strong products yield non-word-representable graphs. With the problem of characterizing the word-representability of tensor and strong products remaining open, we investigate their multi-word-representation numbers and establish general bounds.

Building upon our study of the lexicographic product of graphs, we extend our investigation to the multi-word-representation number for the lexicographic powers of a graph. For a graph $G$ and an integer $k \ge 1$, the $k$-th lexicographic power $G^{[k]}$ represents the repeated application of the lexicographic product to the base graph $G$. In this setting, we establish upper bounds for the multi-word-representation number $\mu(G^{[k]})$ and characterize the word-representability of $G^{[k]}$. As a primary application, we use lexicographic powers to investigate the extremal function $\tau(n)$, defined as the largest integer such that every graph on $n$ vertices contains a word-representable induced subgraph of order at least $\tau(n)$. The function $\tau(n)$ was introduced and evaluated for small orders $n \le 9$ in~\cite{multi-word-rep}. In this paper, we establish an upper bound for $\tau(n)$ showing that it grows sublinearly, while providing a polynomial lower bound for the class of perfect graphs. Crucially, we leverage these asymptotic properties to show that the Word-representable Bipartition (WB) problem, also introduced in~\cite{multi-word-rep}, yields a negative answer for all sufficiently large $n$. The WB problem asks whether the vertex set of any graph can be partitioned into two subsets that induce word-representable subgraphs. While it is known that every graph on $n \le 13$ vertices admits such a bipartition~\cite{multi-word-rep}, we prove that for every integer $n \geq 2593$, there exists a graph of order $n$ lacking a word-representable bipartition, thereby bounding the range for which a positive answer can exist.

\paragraph{Our Contributions.}
We summarize our main contributions as follows.

\begin{enumerate}
 \item \textbf{Cartesian and Rooted Products:} We show that if $H$ is either the Cartesian product $G_{1} \square G_{2}$ or the rooted product $G_{1} \diamond G_{2}$ of graphs $G_1$ and $G_2$, then 
    \[
        \mu(H) = \max\{\mu(G_{1}), \mu(G_{2})\}.
    \]

 \item \textbf{Lexicographic Product:} Let $H = G_1 \circ G_2$ be the lexicographic product of graphs $G_1$ and $G_2$. We establish the general lower and upper bounds:
    \[
        \max\{\mu(G_1), \mu(G_2)\} \le \mu(H) \le \mu(G_1) + \mu(G_2).
    \]
    Moreover, we prove that if $\mathrm{cov}_{\mathrm{comp}}(G_2) \le \max\{\mu(G_1), \mu(G_2)\}$, the lower bound is tight, yielding $\mu(H) = \max\{\mu(G_1), \mu(G_2)\}$. In the special case where both $G_1$ and $G_2$ are minimal non-word-representable graphs, we show that $\mu(H) \le 3$.
    
 \item \textbf{Word-representability Characterization of Lexicographic Powers:} For any graph $G$ and integer $k \ge 2$, we show that the $k$-th lexicographic power $G^{[k]}$ is word-representable if and only if $G$ is a comparability graph.

 \item \textbf{Bounds on Multi-word-representation Number for Lexicographic Powers:} For any graph $G$ and an integer $k \ge 2$, we show that
    \[
        \mu(G^{[k]}) \le \mathrm{cov}_{\mathrm{comp}}(G).
    \]
    Furthermore, if $G$ is word-representable but not a comparability graph, then $\mu(G^{[k]}) \le k$.

 \item \textbf{Bounds on the Function $\tau(n)$:} For any integer $k \ge 1$, we show that $\tau(8^{k}) \le 6^{k}$, which implies that
    \[
        \tau(n) \le n^{\log_8 6 + \epsilon},
    \]
    for any $\epsilon > 0$ and sufficiently large $n$. Conversely, when restricted to the class of perfect graphs on $n$ vertices, we establish the polynomial lower bound:
    \[
        \tau_{\mathrm{perf}}(n) \ge n^{1/2}.
    \]

 \item \textbf{Negative Answer to the WB Problem for Large $n$:} We prove that the WB problem has a negative answer for all $n \geq 2593$. Specifically, we show that for every integer $n \geq 2593$, there exists a graph of order $n$ that does not admit a word-representable bipartition.
    
\item \textbf{Corona Product:} Let $H = G_1 \odot G_2$ be the corona product of graphs $G_1$ and $G_2$. We establish the general bounds:
    \[
        \max\{\mu(G_1), \mu(G_2)\} \le \mu(H) \le \max\{\mu(G_1), \mu(G_2)\} + 1.
    \]
    Moreover, we prove that $\mu(H) = \max\{\mu(G_1), \mu(G_2)\}$ if either $\mu(G_1) > \mu(G_2)$ or $G_2$ admits a covering by $\mu(G_2)$ word-representable graphs, at least one of which is a comparability graph.
    
 \item \textbf{Tensor Product:} Let $H = G_1 \times G_2$ be the tensor product of graphs $G_1$ and $G_2$. We show that
    \[
        \mu(H) \le \log_3 \bigl(\min\{\chi(G_1), \chi(G_2)\}\bigr).
    \]
    As an immediate consequence, we observe that if at least one of the graphs ($G_{1}$ or $G_{2}$) is $3$-colorable, their tensor product is guaranteed to be word-representable.

 \item \textbf{Strong Product:} Let $H = G_1 \boxtimes G_2$ be the strong product of graphs $G_1$ and $G_2$. We establish the bounds
    \[
        \max\{\mu(G_1), \mu(G_2)\} \le \mu(H) \le \max\{\mu(G_1), \mu(G_2)\} + \log_3 \bigl(\min\{\chi(G_1), \chi(G_2)\}\bigr).
    \]
\end{enumerate}

Here, the parameter $\mathrm{cov}_{\mathrm{comp}}(G)$ denotes the cover number of the graph $G$ by comparability graphs, defined as the minimum number of comparability graphs needed to cover the edge set of $G$ (see Definition~\ref{cov-num-defn}).

The remainder of this paper is organized as follows. Section~\ref{sec:preliminaries} introduces the necessary preliminaries and foundational definitions. Sections~\ref{sec:cartesian} and~\ref{sec:rooted} determine the exact multi-word-representation numbers for the Cartesian and rooted products of two arbitrary graphs, respectively. Section~\ref{sec:lexicographic} establishes general bounds on the multi-word-representation number of the lexicographic product and identifies conditions under which exact values can be computed. This is extended in Section~\ref{sec:lexpower_and_mul}, which derives bounds for the lexicographic powers of a graph. Next, Section~\ref{sec:tau:n} explores the extremal function $\tau(n)$, establishing a sublinear upper bound via lexicographic powers alongside general and restricted lower bounds. Finally, we apply these asymptotic results to resolve the WB problem in the negative for all sufficiently large $n$. Section~\ref{sec:corona} presents general bounds for the corona product of two graphs and details the instances where exact values are achieved. Sections~\ref{sec:tensor} and~\ref{sec:strong} then establish bounds for the tensor and strong products, respectively. Finally, Section~\ref{sec:conclusion} concludes the paper and outlines several open questions for future research.

\section{Preliminaries}
\label{sec:preliminaries}

All graphs considered in this paper are simple and undirected. For a graph $G$, we denote its vertex set by $V(G)$ and its edge set by $E(G)$. For $A \subseteq V(G)$, the induced subgraph of $G$ on $A$ is denoted by $G[A]$. The chromatic number, the independence number, and the clique number of a graph $G$ are denoted by $\chi(G)$, $\alpha(G)$, and $\omega(G)$, respectively. Unless otherwise specified, all logarithms considered are to the base $2$.

\begin{definition}
Let $w$ be a word over the alphabet $\Sigma$. For any pair of distinct letters $a, b \in \Sigma$, let $w|_{ab}$ denote the word $w$ restricted to the copies of $a$ and $b$ in the order in which they appear. The letters $a$ and $b$ \emph{alternate} in $w$ if the substrings $aa$ and $bb$ are absent in $w|_{ab}$. 
\end{definition}

\begin{example}
Consider the word $w = 1213$. We examine pairs of letters to determine whether they alternate in $w$. The letters $1$ and $2$ alternate, as $w|_{12} = 121$ does not contain the substrings $11$ or $22$. The letters $2$ and $3$ alternate, as $w|_{23} = 23$ does not contain the substrings $22$ or $33$. Conversely, the letters $1$ and $3$ do not alternate, as $w|_{13} = 113$ contains the substring $11$.
\end{example}

\begin{definition}
\label{word-rep-dfn}
A graph $G$ is \emph{word-representable} if there exists a word $w$ over the alphabet $V(G)$ such that for any distinct pair of vertices $x, y \in V(G)$, $(x, y) \in E(G)$ if and only if the letters $x$ and $y$ alternate in $w$. The word $w$ is said to \emph{represent} the graph $G$ and is called a \emph{word-representant} of $G$.
\end{definition}

\begin{example}
\label{example-word-rep-graph}
We illustrate Definition \ref{word-rep-dfn} using two fundamental graph classes:
\begin{enumerate}
    \item Let $G$ be an independent set on $n$ vertices $\{v_1, v_2, \dots, v_n\}$. The word $w = v_1 v_1 v_2 v_2 \dots v_n v_n$ represents $G$. Indeed, for any pair of distinct vertices $v_i$ and $v_j$ (with $i < j$), the restricted word is $w|_{v_i v_j} = v_i v_i v_j v_j$. Since the substrings $v_i v_i$ and $v_j v_j$ are present, the letters do not alternate in $w$, meaning no two vertices are adjacent.
    
    \item Let $G$ be a complete graph (clique) on $n$ vertices $\{v_1, v_2, \dots, v_n\}$. The word $w = v_1 v_2 \dots v_n$, containing exactly one occurrence of each vertex, represents $G$. For any distinct pair $v_i$ and $v_j$ (with $i < j$), the restriction yields $w|_{v_i v_j} = v_i v_j$. Because the substrings $v_i v_i$ and $v_j v_j$ are absent in $w|_{v_i v_j}$, the letters alternate in $w$. Thus, every pair of distinct vertices is adjacent.
\end{enumerate}
The smallest non-word-representable graph is the wheel graph $W_5$ on six vertices. 
\end{example}

Comparability graphs form a subclass of word-representable graphs. 

\begin{definition}
A graph $G = (V, E)$ is a \emph{comparability graph} if it admits a transitive orientation of its edges; that is, an orientation of $E$ such that for any vertices $a, b, c \in V$, the directed edges $a \rightarrow b$ and $b \rightarrow c$ imply the existence of the directed edge $a \rightarrow c$. A graph that does not admit such an orientation is a \emph{non-comparability graph}.
\end{definition}

The cliques and independent sets discussed in Example \ref{example-word-rep-graph} are also basic examples of comparability graphs, whereas the cycle graph $C_5$ is a well-known non-comparability graph \cite{Gallai1967TransitivOG}. 

One notable structural relationship between word-representable graphs and comparability graphs is highlighted by the following characterization for graphs containing a universal vertex.

\begin{theorem}[\cite{onrepgraphs}]
\label{comp-all-adj-wr-character}
Let $G$ be a graph on $n$ vertices containing a vertex $x \in V(G)$ of degree $n-1$. Let $H = G[V(G) \setminus \{x\}]$ be the induced subgraph obtained by removing $x$. Then, $G$ is word-representable if and only if $H$ is a comparability graph.
\end{theorem}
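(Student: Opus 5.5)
The plan is to use the semi-transitive orientation characterization of word-representability, so that both directions become statements about orientations.

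\emph{($\Leftarrow$) $H$ comparability $\Rightarrow$ $G$ word-representable.} Fix a transitive orientation of $H$. Orient every edge $xv$, $v\in V(H)$, as $x\to v$, so $x$ is a source. The result is acyclic: a directed cycle cannot pass through a source, so it would lie in $H$, which is impossible since transitive orientations are acyclic. Next we rule out shortcuts $v_1\to\cdots\to v_k$ with $v_1\to v_k$. If the path avoids $x$, it lies in $H$; transitivity then gives every arc $v_i\to v_r$ for $i<r$, so it is not a shortcut. If the path uses $x$, then $x=v_1$ because $x$ is a source. The pairs $x v_r$ are all edges since $x$ is universal, and the remaining pairs lie on a directed path in $H$, so transitivity again supplies all arcs. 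Hence no shortcut exists, and $G$ is word-representable.

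\emph{($\Rightarrow$) $G$ word-representable $\Rightarrow$ $H$ comparability.} Take a semi-transitive orientation of $G$. The neighbours of $x$ (all of $V(H)$) split into the in-neighbour set $I$ and the out-neighbour set $O$. I would build an orientation of $H$ and show it is transitive. Keep the given orientation on edges inside $I$ and inside $O$. Between $I$ and $O$, the plan is to reverse the orientation, i.e.\ use the standard trick of cutting the orientation at $x$.

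The key facts come from applying the shortcut condition to paths through $x$:
\begin{itemize}
\item If $a\to b\to c$ in $O$, then $x\to a\to b\to c$ is a path with $x\to c$. Semi-transitivity forces $a\to c$, so the orientation on $O$ is transitive; symmetrically it is transitive on $I$.
\item If $u\in I$, $v\in O$ and $v\to u$ is an arc, then $x\to v\to u\to x$ is a directed cycle, which is forbidden. So every arc between $I$ and $O$ goes $u\to v$.
\item For $u\in I$ and $v\in O$ adjacent, consider paths $w\to u\to x\to v$ and similar.
\end{itemize}

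The goal is an ordering in which $O$ comes first and $I$ comes after, with the cross edges reversed to point $v\to u$. This matches the word-representation proof of \cite{onrepgraphs}: cyclically shift a representant so that it starts with the single copy of $x$, then read off a permutation order. That suggests a cleaner route. Since $x$ alternates with everything, I would show that $x$ may be assumed to occur once (this is the main technical step). Then $w=xw'$ after a cyclic shift, and every other letter occurs exactly once between consecutive copies of $x$. In that case $w'$ splits into blocks, each a permutation of $V(H)$, and the edges of $H$ are exactly the pairs ordered the same way in every permutation. So $H$ is the intersection of linear orders, i.e.\ a comparability graph.

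The main obstacle is this reduction. I expect to handle it using the fact that cyclic shifts preserve representation, and that in a representant every letter alternating with $x$ occurs the same number of times as $x$, up to $\pm1$. That lets us normalize to a uniform representant, after which the block decomposition follows.
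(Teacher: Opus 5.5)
Your $(\Leftarrow)$ direction is correct; making $x$ a source on top of a transitive orientation of $H$ even gives a transitive orientation of $G$. The gap is in $(\Rightarrow)$.

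\textbf{The word route.} This is the original Kitaev--Pyatkin argument, which the paper only cites and does not prove. You take a uniform representant, shift it cyclically to $xp_1xp_2\cdots xp_k$ with each $p_i$ a permutation of $V(H)$, and read $H$ off as the comparability graph of the intersection of the orders $p_i$. The step you flag as the crux, reaching a uniform representant, is not delivered by the tools you name.
\begin{itemize}
\item Cyclic shifts preserve the represented graph only for uniform words, so using them to obtain uniformity is circular. For example, $y\,x\,z\,y\,x$ represents $K_3$, while its shift $x\,z\,y\,x\,y$ represents $K_3$ minus the edge $yz$.
\item The fact that each letter occurs $m-1$, $m$ or $m+1$ times ($m$ the number of copies of $x$) does not give uniformity either.
\end{itemize}
What is missing is the following lemma. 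Let $u$ list, in order of first occurrence, the letters occurring fewer than the maximum number of times in $w$. Then $uw$ represents the same graph: if such a letter $y$ alternates with a letter $z$ of maximal multiplicity, then $w|_{yz}$ begins with $z$, so prepending $y$ keeps alternation. Iterate until the word is uniform. Alternatively, cite that every word-representable graph is $k$-representable for some $k$. Also, ``$x$ may be assumed to occur once'' is the wrong normalization: in a uniform word it would force $G$ to be complete.

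\textbf{The orientation route.} The route you abandoned closes with two more shortcut checks, using only Theorem~\ref{wrg=semi}. Let $D'$ keep the arcs inside $I$ and inside $O$, and direct each cross edge from $O$ to $I$. In $D'$ a vertex of $I$ has out-neighbours only in $I$. So, besides the within-class cases you already handled, a path $a\to b\to c$ in $D'$ has one of two forms.
\begin{itemize}
\item Case $a,b\in O$, $c\in I$. The original arcs are $a\to b$ and $c\to b$. The path $c\to x\to a\to b$ together with the arc $c\to b$ forces the original arc $c\to a$.
\item Case $a\in O$, $b,c\in I$. The original arcs are $b\to a$ and $b\to c$. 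The path $b\to c\to x\to a$ together with the arc $b\to a$ forces $c\to a$.
\end{itemize}
Either way $a\to c$ in $D'$, so $D'$ is transitive and $H$ is a comparability graph, with no uniformization needed.
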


It is a well-known fact that both the class of word-representable graphs and the class of comparability graphs are hereditary. Consequently, if a graph contains a non-word-representable (or a non-comparability) induced subgraph, then the graph itself is non-word-representable (or a non-comparability graph, respectively).

\begin{definition}
A graph $G$ is a \emph{minimal non-word-representable graph} if $G$ is not word-representable, but all its proper induced subgraphs are word-representable.
\end{definition}

The wheel graph $W_5$ on six vertices is also an example of a minimal non-word-representable graph.
Word-representable graphs admit a characterization in terms of semi-transitive orientations.

\begin{definition}
\label{semi-trans-defn}
A graph $G$ is \emph{semi-transitive} if it admits an acyclic orientation such that for every directed path $v_1 \rightarrow \dots \rightarrow v_k$, either $v_1 \not\rightarrow v_k$, or if $v_1 \rightarrow v_k$, then $v_i \rightarrow v_j$ exists for all $1 \le i < j \le k$. If $v_1 \rightarrow v_k$ is present while some $v_i \rightarrow v_j$ with $i<j$ is missing, this configuration forms a \emph{shortcut}~\cite{book}.
\end{definition}

\begin{theorem}[\cite{book}]
\label{wrg=semi}
A graph $G$ is word-representable if and only if it is semi-transitive.
\end{theorem}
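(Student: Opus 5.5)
The plan is to prove the two directions separately, using the orientation induced by first occurrences in one direction and an explicit construction of a word from a semi-transitive orientation in the other.

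\emph{Word-representable $\Rightarrow$ semi-transitive.} I first reduce to a \emph{uniform} word-representant, in which every letter occurs the same number $k$ of times. To do this, I check that if $w$ represents $G$, then so does $p(w)\,w$, where $p(w)$ lists the letters in order of their last occurrence in $w$ and is placed in front of $w$. Prepending a single copy of each letter in this order preserves all alternations and creates no new ones. Iterating this operation for letters with fewer copies yields a uniform representant.

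Given a uniform $w$, I orient each edge $x\to y$ when the first occurrence of $x$ precedes the first occurrence of $y$. This orientation is a restriction of a linear order, so it is acyclic. For a directed path $v_1\to\cdots\to v_k$ with $v_1\to v_k$, write $v^{(i)}$ for the $i$-th occurrence. Alternation on each arc gives
\[
v_j^{(i)}<v_{j+1}^{(i)}<v_j^{(i+1)} ,
\]
and chaining these inequalities gives $v_1^{(i)}<v_2^{(i)}<\dots<v_k^{(i)}$. The arc $v_1\to v_k$ adds $v_k^{(i)}<v_1^{(i+1)}$. Together these show that every pair $v_a,v_b$ with $a<b$ interleaves as $v_a^{(i)}<v_b^{(i)}<v_a^{(i+1)}$, so $v_a$ and $v_b$ alternate and are adjacent, oriented $v_a\to v_b$. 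Hence no shortcut exists.

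\emph{Semi-transitive $\Rightarrow$ word-representable.} Fix a semi-transitive orientation and a topological order $P$. The word $PP$ already makes every edge alternate, but it also makes every non-edge alternate. So I will build a word $w=P_0P_1\cdots P_m$ of topological orders, or slight modifications of them, that destroys alternation for one non-adjacent pair $\{x,y\}$ at a time while preserving alternation for every arc. If $x$ and $y$ are incomparable in the reachability order, a single topological order with $y$ before $x$ suffices. Concatenating it with $P$, where $x$ precedes $y$, produces the factor $x\,y\,y\,x$ in the restriction, so $x$ and $y$ no longer alternate.

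The hard case is $x\rightsquigarrow y$ with $xy\notin E$, where every topological order places $x$ before $y$. For this case I would use a ``shift'' block. Let $D$ be the set of vertices lying on directed paths from $x$ to $y$, together with the vertices forced by arcs into these. In the next copy I move the block containing $x$ and its relevant successors so that $x$ is repeated before $y$ appears again, which produces $x\,x$ in the restriction to $\{x,y\}$. The main obstacle is to verify that this move does not break alternation of any arc $u\to v$. An arc could only fail if $u$ is moved past $v$, or $v$ is moved while $u$ is not. Semi-transitivity is exactly what rules this out: a path $x\rightsquigarrow y$ combined with an arc that straddles the moved block would yield a path with a long arc but a missing inner arc, namely the non-edge $xy$, which is a shortcut. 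I expect that choosing the moved set correctly, probably as the set of vertices reachable from $x$ that do not reach $y$ by a path avoiding the relevant arcs, and checking that this choice is closed under the arcs that matter, will be the delicate part of the argument.

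Once each non-edge has its own disrupting block and all blocks keep every arc alternating, their concatenation represents $G$.
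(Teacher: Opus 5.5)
The paper does not prove this statement; it is imported from \cite{book}, so I judge your argument on its own.

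Your first-occurrence orientation and the chaining $v_1^{(i)}<\dots<v_k^{(i)}<v_1^{(i+1)}$ are correct, but the uniformization step they rest on is misstated. Prepending the letters in order of their \emph{last} occurrences can destroy alternation: for $w=xyx$ (representing an edge $xy$) you get $yx\,xyx$, which contains $xx$. An alternating pair with $w|_{xy}=xy\cdots$ needs $x$ before $y$ in the prefix, so the right order is that of \emph{first} occurrences (alternatively, append the last-occurrence order at the end). Moreover, prepending a full permutation leaves every difference of multiplicities unchanged, so iterating it never produces a uniform word. You must prepend, in first-occurrence order, only the letters occurring fewer than the maximal number of times. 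Then observe that when exactly one letter of an alternating pair is prepended, it is the rarer one, which cannot be the first letter of $w|_{xy}$.

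The genuine gap is the converse, which is the substance of the theorem. For a non-edge $xy$ with $x\rightsquigarrow y$ you never specify the modified block, and the sets you guess (vertices on $x$--$y$ paths, successors of $x$) point the wrong way. Every topological order puts $x$ before $y$, so any concatenation of topological orders restricts to $(xy)^m$. The block therefore cannot be a permutation: the second copy of $x$ must be pulled in front of the first copy of $y$. Then every in-neighbor of $x$ must have its second copy pulled in front of that of $x$; this is your own remark that ``$v$ moved while $u$ is not'' is fatal. So the set $S$ of pulled letters must be closed under predecessors, i.e.\ contain every vertex with a directed path to $x$. What must precede the pulled copies are the first copies of $N^+(S)$ and, to keep a topological order, all their ancestors; let $D$ be the predecessor-closure of $S\cup N^+(S)$.

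Semi-transitivity is used precisely to show $y\notin D$. First, $y$ is not an ancestor of $S$, by acyclicity. Second, suppose $u\in S$, $u\to v$, and $v=y$ or $y\rightsquigarrow v$. Then the path $u\rightsquigarrow x\rightsquigarrow y\rightsquigarrow v$ together with the arc $u\to v$ is a shortcut, because $x\not\rightarrow y$ (when $u=x$ and $v=y$, the arc $u\to v$ would itself be $xy$). Taking a topological order $P$ with $D$ as a prefix, the $2$-uniform word
\[
P|_D\;P|_S\;P|_{V\setminus D}\;P|_{V\setminus S}
\]
restricts to $uvuv$ on every arc $u\to v$ and to $xxyy$ on $\{x,y\}$. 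Add blocks $PP'$ for the incomparable non-edges; every block then restricts to a power of $uv$ on each arc, so the concatenation represents $G$. Without an explicit block of this kind and its verification, the second half of your proposal is a plan rather than a proof.
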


We next define the union of a collection of graphs.

\begin{definition}
For graphs $G_1,\dots,G_k$, their \emph{union}, denoted by $\bigcup_{i=1}^k G_i$, is the graph $G$ where
\[
V(G)=\bigcup_{i=1}^k V(G_i)
\quad \text{and} \quad
E(G)=\bigcup_{i=1}^k E(G_i).
\]
\end{definition}

When these graphs have no vertices in common, their union preserves word-representability.

\begin{theorem}[\cite{bgkaha}]
\label{unionofwrgiswrg}
Let $G_1, \dots, G_n$ be word-representable graphs with disjoint vertex sets. Then their union $\bigcup_{i=1}^n G_i$ is word-representable.
\end{theorem}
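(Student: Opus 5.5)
Since the vertex sets of $G_1,\dots,G_n$ are pairwise disjoint, there are no edges between vertices of different $G_i$ in the union. So the goal is a single word in which letters coming from different components never alternate, while the alternation pattern inside each component is exactly that of its own representant. I would do this directly with words; a proof via semi-transitive orientations is also possible, but the word argument is shorter.

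\textbf{Step 1: uniform representants.} For each $i$, fix a word $w_i$ over $V(G_i)$ representing $G_i$. I would invoke the standard fact that a word-representable graph admits a \emph{uniform} representant, one in which every letter occurs exactly $k$ times. Padding by repetition then lets me assume all $w_i$ are $k$-uniform for a common $k \ge 2$. The argument is this: concatenating a uniform representant $u$ with itself gives $uu$, which still represents the same graph, and doing so repeatedly brings every $w_i$ to a common multiplicity of at least $2$. If I want to avoid the uniform-word fact, I can skip the common $k$ and only ensure each $w_i$ contains every letter at least twice; Step 2 below works under that weaker assumption.

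\textbf{Step 2: concatenate.} Let $w = w_1 w_2 \cdots w_n$.
\begin{itemize}
\item For $x, y$ in the same $V(G_i)$, the restriction $w|_{xy}$ equals $w_i|_{xy}$, because the other blocks contain neither letter. Hence $x$ and $y$ alternate in $w$ exactly when they are adjacent in $G_i$.
\item For $x \in V(G_i)$ and $y \in V(G_j)$ with $i < j$, we get $w|_{xy} = x^{a} y^{b}$ with $a, b \ge 2$. This contains $xx$, so $x$ and $y$ do not alternate, matching the absence of an edge in the union.
\end{itemize}
Therefore $w$ represents $\bigcup_{i=1}^n G_i$.

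\textbf{Main obstacle.} The only delicate point is the case of a letter occurring once in some $w_i$. If $x$ and $y$ each occur once, then $w|_{xy} = xy$, which would falsely create an edge. The padding in Step 1 handles this. Replacing $w_i$ by $w_i w_i$ preserves the alternation pattern: for two letters that alternate in $w_i$, this holds whenever $w_i|_{xy}$ starts with one letter and ends with the other, which is the case in a uniform word. This is why I rely on uniformity rather than arbitrary representants.
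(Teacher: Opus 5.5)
Your proof is correct. It is the standard concatenation-of-uniform-representants argument; the paper does not prove this statement itself but cites it from \cite{bgkaha}, so there is no in-paper proof to compare against.

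Two small points need tidying.

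First, repeated doubling $u \mapsto uu$ only yields multiplicities $k_i 2^m$, which need not agree (e.g.\ $k_1 = 3$, $k_2 = 2$). Use $u^{m}$ for a suitable $m$ instead; the same alternation argument applies. Alternatively, drop the common $k$ altogether, since Step~2 only needs every letter to occur at least twice.

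Second, your fallback of avoiding the uniform-word fact cannot rely on $w_iw_i$ for a non-uniform representant. For $w = xyx$ one gets $ww|_{xy} = xyxxyx$, which destroys an edge. If you want to bypass uniformity, prepend the word $\pi(w)$ that lists the letters in order of their first occurrence in $w$. Then $\pi(w)\,w$ represents the same graph and contains every letter at least twice.
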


In terms of connected components, the property is characterized as follows.

\begin{theorem}[\cite{book}]
\label{word-rep-components-thm}
A graph $G$ is word-representable if and only if every connected component of $G$ is word-representable.
\end{theorem}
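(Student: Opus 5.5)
The statement to prove is Theorem~\ref{word-rep-components-thm}: $G$ is word-representable if and only if every connected component of $G$ is word-representable. The plan is to prove the two directions separately, using heredity for one and Theorem~\ref{unionofwrgiswrg} for the other.

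\emph{Forward direction.} Assume $G$ is word-representable, and let $C$ be a connected component of $G$. Then $C$ is the induced subgraph $G[V(C)]$. Word-representability is hereditary: if $w$ represents $G$, delete from $w$ every letter not in $V(C)$ to get a word $w'$. For any $x,y\in V(C)$ we have $w'|_{xy}=w|_{xy}$, so $x$ and $y$ alternate in $w'$ exactly when they alternate in $w$, that is, exactly when $xy\in E(G)$. Since $C$ is induced, this holds exactly when $xy\in E(C)$. Hence $w'$ represents $C$.

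\emph{Reverse direction.} Assume every component $C_1,\dots,C_m$ of $G$ is word-representable. The components have pairwise disjoint vertex sets. Their union, in the sense of the union definition above, has vertex set $V(G)$ and edge set $E(G)$, because every edge of $G$ lies inside a single component. Theorem~\ref{unionofwrgiswrg} then says the union is word-representable, and this union is exactly $G$.

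There is no real obstacle. The only point to check is that the union of the components reproduces $G$ with no missing or extra edges. This follows because no edge of $G$ joins two different components.
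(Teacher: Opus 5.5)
The paper gives no proof of this statement. It is imported from Kitaev and Lozin~\cite{book}, so there is no internal argument to compare against. Your proof is correct relative to what the paper states before it.

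The forward direction is complete and self-contained. Deleting the letters outside $V(C)$ leaves $w|_{xy}$ unchanged for $x,y\in V(C)$. Because $C$ is an induced subgraph, alternation in $w'$ then picks out exactly $E(C)$.

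The reverse direction is valid, but it carries no content of its own. Theorem~\ref{unionofwrgiswrg}, applied to the components of $G$, is literally the reverse implication. Your argument therefore only shows that the two cited facts are interchangeable given heredity. The substantive step, producing one word that represents a vertex-disjoint union, is handed off to a result equivalent to what you are proving. If that union theorem is itself derived from the components theorem in its source, your argument would be circular as a first-principles proof.

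To make the reverse direction do real work with tools already in the paper, go through Theorem~\ref{wrg=semi}. Orient each component semi-transitively and take the union of these orientations. No edge joins two components, so every directed cycle lies inside a single component. The same holds for every directed path $v_1\to\cdots\to v_k$ together with an edge $v_1\to v_k$, and inside a component the edges of $G$ and of the component coincide. Hence the combined orientation is acyclic and has no shortcut, so $G$ is word-representable.

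A purely word-based alternative is to concatenate representants $w_1\cdots w_m$ of the components in which every letter occurs at least twice, for instance uniform representants with at least two copies of each letter. For $x,y$ in different components the restriction is $x^a y^b$ or $y^b x^a$ with $a,b\ge 2$, so they do not alternate. Restrictions inside a component are unchanged. This route needs the standard existence of uniform representants, which the paper does not state.
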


The concept of multi-word-representability generalizes word-representability via graph coverings.

\begin{definition}
A collection of graphs $G_1, G_2, \dots, G_k$ is said to \emph{cover} a graph $G$ if
\[
G = \bigcup_{i=1}^k G_i.
\]
\end{definition}

\begin{definition}
\label{defn-covering-word-rep}
A graph $G$ is said to admit a \emph{covering by word-representable graphs} if there exist word-representable graphs $G_1, G_2, \dots, G_k$ that cover $G$.
\end{definition}

\begin{definition}
Let $k \ge 1$ be an integer. A graph $G$ is called \emph{$k$-multi-word-representable} if $G$ admits a covering by at most $k$ word-representable graphs.
\end{definition}

This classification naturally introduces the following parameter.

\begin{definition}
The \emph{multi-word-representation number} of a graph $G$, denoted by $\mu(G)$, is the smallest positive integer $k$ such that $G$ is $k$-multi-word-representable.
\end{definition}
This parameter is non-decreasing under the induced subgraph relation.

\begin{remark}
\label{monotonic-mu-hereditary-remark}
For every integer $k \geq 1$, the class of $k$-multi-word-representable graphs is hereditary. Indeed, let $H$ be an induced subgraph of a $k$-multi-word-representable graph $G$, where $G = \bigcup_{i=1}^k G_i$ and each $G_i$ is word-representable. Then
\[
H = \bigcup_{i=1}^k G_i[V(H) \cap V(G_i)].
\]
Since the class of word-representable graphs is hereditary, each induced subgraph $G_i[V(H) \cap V(G_i)]$ is word-representable. Hence, $H$ is a union of at most $k$ word-representable graphs, and is therefore $k$-multi-word-representable.

In particular, if $H$ is an induced subgraph of $G$, then $\mu(H) \le \mu(G)$.
\end{remark}

Based on the fact that every $3$-colorable graph is word-representable \cite{book}, the following bound holds.
\begin{theorem}[\cite{bgkaha}]
\label{mu-log-chi-bound}
Let $G$ be a graph. Then
\[
\mu(G) \le \lceil \log_{3}\chi(G) \rceil.
\]
\end{theorem}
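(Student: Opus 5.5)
The plan is a product colouring argument. Set $\chi=\chi(G)$ and $k=\lceil \log_3 \chi\rceil$, so that $3^k \ge \chi$. If $\chi \le 1$ the graph has no edges and is word-representable, so $\mu(G)=1$; I will treat this case separately, reading the bound with the convention $\mu\ge 1$.

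Otherwise I fix a proper colouring $c\colon V(G)\to\{0,1,\dots,\chi-1\}$. Since $\chi \le 3^k$, every colour $c(v)$ can be written in base $3$ with exactly $k$ digits, $c(v)=(d_1(v),\dots,d_k(v))$ with each $d_i(v)\in\{0,1,2\}$.

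For $i=1,\dots,k$, let $G_i$ be the spanning subgraph of $G$ with vertex set $V(G)$ and edge set
\[
E(G_i)=\{uv\in E(G) : d_i(u)\neq d_i(v)\}.
\]
These subgraphs cover $G$. Indeed, if $uv\in E(G)$ then $c(u)\neq c(v)$ because the colouring is proper. Two distinct numbers have distinct base-$3$ expansions, so they differ in some digit $i$, and then $uv\in E(G_i)$.

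Each $G_i$ is properly $3$-coloured by the map $v\mapsto d_i(v)$, by the way its edges were chosen. Every $3$-colourable graph is word-representable~\cite{book}, so each $G_i$ is word-representable. Hence $G$ is covered by $k$ word-representable graphs, which gives $\mu(G)\le k=\lceil\log_3\chi(G)\rceil$.

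No step here is a real obstacle. The one point needing care is that the cover must consist of spanning subgraphs, so that isolated vertices are also covered. Taking every $G_i$ on the full vertex set $V(G)$ handles this, and word-representability is unaffected because isolated vertices keep the graph $3$-colourable.
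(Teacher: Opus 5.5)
Your proposal is correct, and it is essentially the argument behind this result: the paper gives no proof of its own, citing \cite{bgkaha} and noting only that the bound rests on every $3$-colourable graph being word-representable. Splitting the edges by the base-$3$ digits of a proper $\chi(G)$-colouring into $\lceil\log_3\chi(G)\rceil$ spanning $3$-colourable subgraphs is exactly that idea. You were also right to flag the case $\chi(G)=1$: there the statement as literally written reads $\mu(G)\le 0$, so it only holds under the convention $\mu\ge 1$.
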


We recall the following parameters from \cite{multi-word-rep} regarding maximum size word-representable induced subgraphs.

\begin{definition}[\cite{multi-word-rep}]
\label{def:representable-set}
Let $G$ be a graph and let $S \subseteq V(G)$.
\begin{itemize}
    \item The set $S$ is \emph{representable} if the induced subgraph $G[S]$ is word-representable.
    \item A representable set $S$ is \emph{maximum} if it has maximum cardinality among all representable subsets of $V(G)$.
\end{itemize}
The cardinality of a maximum representable set of $G$ is denoted by $\eta(G)$.
\end{definition}

\begin{definition}[\cite{multi-word-rep}]
\label{def:tau-function}
For a positive integer $n$, the function $\tau(n)$ is defined as
\[
\tau(n) = \min\{\eta(G) \mid G \text{ is a graph on } n \text{ vertices}\}.
\]
Equivalently, $\tau(n)$ is the largest integer such that every graph on $n$ vertices contains a word-representable induced subgraph on at least $\tau(n)$ vertices.
\end{definition}

Finally, we recall the Word-representable Bipartition (WB) problem introduced in \cite{multi-word-rep}.

\begin{definition}
A graph $G$ \emph{admits a word-representable bipartition} if its vertex set $V(G)$ can be partitioned into two subsets $V_1$ and $V_2$ such that both induced subgraphs $G[V_1]$ and $G[V_2]$ are word-representable.
\end{definition}

\begin{remark}[The WB Problem \cite{multi-word-rep}]
\label{rem:wb_problem}
The Word-representable Bipartition (WB) problem asks whether every graph on $n$ vertices admits a word-representable bipartition for all $n \geq 1$.
\end{remark}

We now define the notion of the cover number of a graph by comparability graphs.

\begin{definition}
\label{cov-num-defn}
The \emph{cover number of a graph $G$ by comparability graphs}, denoted by $\mathrm{cov}_{\mathrm{comp}}(G)$, is the minimum integer $k$ such that there exist comparability graphs $G_1, G_2, \dots, G_k$ satisfying
\[
E(G) = E(G_1) \cup E(G_2) \cup \cdots \cup E(G_k).
\]

\end{definition}

For a comparability graph $G$, $\mathrm{cov}_{\mathrm{comp}}(G) = 1$, and for a non-comparability graph $G$, $\mathrm{cov}_{\mathrm{comp}}(G) \geq 2$.

\begin{remark}
\label{spanning-cover-remark}
In certain proofs, we will assume without loss of generality that the graphs $G_1, \dots, G_k$ forming a covering of $G$ (whether by word-representable graphs or comparability graphs) are spanning subgraphs of $G$; that is, $V(G_i) = V(G)$ for all $1 \le i \le k$. If a covering graph $G_i$ is not spanning, the missing vertices $V(G) \setminus V(G_i)$ can be added to $G_i$ as isolated vertices. Since adding isolated vertices preserves both word-representability and the property of being a comparability graph, this operation allows us to assume the covering consists entirely of spanning subgraphs without altering the edge union, the multi-word-representation number, or the cover number of $G$ by comparability graphs.
\end{remark}

To investigate the structural behavior of the multi-word-representation number under graph products, this paper considers six distinct graph products. For four of these operations, namely, the Cartesian, rooted, lexicographic, and corona products, the exact conditions under which the resulting product graph is word-representable are already known. Investigating their multi-word-representation numbers serves as a natural extension of the existing characterizations. 

Conversely, for the tensor and strong products, characterizing word-representability remains an open problem. Specifically for the tensor product, this investigation was posed as an open question by Kitaev in~\cite[Chapter~7]{book}. While these characterizations remain open, we advance the study of these operations by establishing an upper bound on the multi-word-representation number for the tensor product, along with both upper and lower bounds for the multi-word-representation number of the strong product.

We begin by recalling the definition of the Cartesian product of two graphs below.

\begin{definition}
Let $G_{1}$ and $G_{2}$ be graphs. The \emph{Cartesian product} $H = G_{1} \square G_{2}$ is the graph defined by
\[
V(H) = V(G_{1}) \times V(G_{2}),
\]
where two vertices $(u_{1}, v_{1})$ and $(u_{2}, v_{2})$ in $V(H)$ are adjacent if and only if either:
\begin{enumerate}
    \item $u_{1} = u_{2}$ and $v_{1}v_{2} \in E(G_{2})$, or
    \item $v_{1} = v_{2}$ and $u_{1}u_{2} \in E(G_{1})$.
\end{enumerate}
\end{definition}

\begin{theorem}[\cite{book}]
\label{cartesian-word-rep-thm}
Let $G_{1}$ and $G_{2}$ be word-representable graphs. Then the Cartesian product $G_{1} \square G_{2}$ is word-representable.
\end{theorem}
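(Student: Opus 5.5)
We prove that $G_1 \square G_2$ is semi-transitive and then invoke Theorem~\ref{wrg=semi}. By Theorem~\ref{wrg=semi}, $G_1$ and $G_2$ admit semi-transitive orientations; fix them. Orient each edge of $H = G_1 \square G_2$ by copying these orientations. An edge $(u,v_1)(u,v_2)$ with $v_1 \to v_2$ in $G_2$ is oriented $(u,v_1)\to(u,v_2)$. An edge $(u_1,v)(u_2,v)$ with $u_1 \to u_2$ in $G_1$ is oriented $(u_1,v)\to(u_2,v)$. It remains to show that this orientation is acyclic and has no shortcuts.

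\emph{Acyclicity.} Suppose there were a directed cycle in $H$. Every arc either fixes the first coordinate or moves it along an arc of $G_1$. Projecting the cycle onto its first coordinate therefore gives a closed directed walk in $G_1$, after discarding the steps where the first coordinate stays fixed. Since $G_1$ is acyclic, this walk has no steps, so the whole cycle lies in a single fiber $\{u\}\times V(G_2)$. That fiber is an oriented copy of $G_2$, which is acyclic, a contradiction.

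\emph{No shortcuts.} Suppose $P=x_1\to x_2\to\cdots\to x_k$ is a directed path in $H$ with $k\ge 3$ and $x_1\to x_k$ an arc. Because $x_1$ and $x_k$ are adjacent, they differ in exactly one coordinate; say $x_1=(u,v)$ and $x_k=(u',v)$ with $u\to u'$ in $G_1$.

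Project $P$ onto the second coordinate. This gives a directed walk in $G_2$ from $v$ back to $v$, after ignoring the steps where the second coordinate stays fixed. By acyclicity of $G_2$, this walk has no steps, so every vertex of $P$ has second coordinate $v$. Hence $P$ lies entirely in the $G_1$-fiber $V(G_1)\times\{v\}$. That fiber is an isomorphic oriented copy of $G_1$.

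Consequently $P$, together with the arc $x_1\to x_k$, corresponds to a path in $G_1$ with a closing arc. Semi-transitivity of $G_1$ then gives all arcs $x_i\to x_j$ for $i<j$ inside the fiber, and these are arcs of $H$. The case where $x_1$ and $x_k$ differ in the second coordinate is symmetric, using $G_2$.

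The only step needing care is ruling out paths that leave the fiber. This is handled by the projection argument: any step in the other coordinate would have to be undone, which would require a directed cycle in that factor. Therefore the orientation is semi-transitive, and $H$ is word-representable by Theorem~\ref{wrg=semi}.
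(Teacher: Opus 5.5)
The paper does not prove this statement. It is quoted from \cite{book} and used only as a black box in Theorem~\ref{thm:cartesian-multi-rep}, so there is no in-paper argument to compare against.

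Your proof is correct and self-contained. The product orientation is well defined because every edge of $G_1 \square G_2$ changes exactly one coordinate. The projection step is the right key idea: in an acyclic orientation a closed directed walk has length zero. That forces any directed cycle, and any directed path $x_1\to\cdots\to x_k$ closed by an arc $x_1\to x_k$, to stay inside a single fiber. There semi-transitivity of the corresponding factor applies, and the required arcs $x_i\to x_j$ are genuinely arcs of $G_1\square G_2$.

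Working with orientations through Theorem~\ref{wrg=semi} spares you from building an explicit word. The same argument also proves Theorem~\ref{rooted-char-word-rep-thm} verbatim. Restrict your orientation to $G_1\diamond G_2$. Every edge there is a fiber edge, every $G_2$-fiber $\{u\}\times V(G_2)$ is a full copy of $G_2$, and the only non-empty $G_1$-fiber is $V(G_1)\times\{v_0\}$. A path closed by an arc in $V(G_1)\times\{v_0\}$ is forced into that fiber by the second-coordinate projection. A path closed by an arc in $\{u\}\times V(G_2)$ is forced into that fiber by the first-coordinate projection.

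What the orientation route does not give you directly is an explicit representant of $G_1\square G_2$ assembled from representants of $G_1$ and $G_2$.
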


We recall the definition of the rooted product of two graphs below.

\begin{definition}
Let $G_{1}$ and $G_{2}$ be graphs, and let $v_{0} \in V(G_{2})$ be a distinguished vertex called the root of $G_{2}$. The \emph{rooted product} $H = G_{1} \diamond G_{2}$ is the graph with vertex set 
\[
V(H) = V(G_{1}) \times V(G_{2}),
\]
where two vertices $(u_1, v_1)$ and $(u_2, v_2)$ are adjacent if and only if either:
\begin{enumerate}
    \item $u_1 = u_2$ and $v_1v_2 \in E(G_2)$, or
    \item $v_1 = v_2 = v_{0}$ and $u_1u_2 \in E(G_1)$.
\end{enumerate}
\end{definition}

\begin{theorem}[\cite{book}]
\label{rooted-char-word-rep-thm}
Let $G_{1}$ and $G_{2}$ be word-representable graphs. Then the rooted product $G_{1} \diamond G_{2}$ is word-representable.
\end{theorem}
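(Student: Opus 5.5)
The plan is to argue via semi-transitive orientations (Theorem~\ref{wrg=semi}) rather than by building words directly. The structural fact behind it is that $H = G_1 \diamond G_2$ consists of pieces glued together at single vertices. One piece is the copy $P_0$ of $G_1$ on the vertex set $\{(u,v_0) : u \in V(G_1)\}$. The other pieces are, for each $u \in V(G_1)$, the copy $P_u$ of $G_2$ on $\{(u,v) : v \in V(G_2)\}$. Each $P_u$ meets the rest of $H$ only in the vertex $(u,v_0)$. Moreover, no edge of $H$ joins a non-root vertex of $P_u$ to a vertex outside $P_u$. Hence each $(u,v_0)$ is a cut vertex separating $P_u \setminus \{(u,v_0)\}$ from the rest of the graph, and each piece is an induced subgraph of $H$.

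First I fix a semi-transitive orientation of $G_1$ and one of $G_2$, both of which exist by Theorem~\ref{wrg=semi}. I transfer the first to $P_0$ and the second to every $P_u$. Since the edge sets of the pieces partition $E(H)$, this gives an orientation of $H$. The key lemma is that every cycle of $H$, viewed as an undirected subgraph, lies entirely inside a single piece. To see this, suppose a cycle contains a non-root vertex of some $P_u$ and also a vertex outside $P_u$. Then it must pass through the cut vertex $(u,v_0)$ twice, which is impossible for a cycle. So a cycle containing a non-root vertex of $P_u$ stays in $P_u$, and any other cycle uses only vertices of $P_0$.

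Acyclicity follows immediately, because a directed cycle would lie in one piece, whose orientation is acyclic. For the absence of shortcuts, suppose $v_1 \to \cdots \to v_k$ is a directed path with $v_1 \to v_k$. For $k \ge 3$, the path together with the edge $v_1v_k$ forms an undirected cycle, so by the lemma all of $v_1,\dots,v_k$ and all the edges involved lie in one piece $P$. Since $P$ is induced in $H$, the edge $v_iv_j$ is present in $H$ exactly when it is present in $P$, and an edge of $P$ carries the same orientation in $H$. Semi-transitivity of $P$ therefore gives $v_i \to v_j$ for all $i<j$. Hence no shortcut exists, and $H$ is word-representable by Theorem~\ref{wrg=semi}.

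The main obstacle is stating the cut-vertex lemma cleanly, including the case where the path's vertices include several root vertices $(u,v_0)$ all lying in $P_0$. I would handle this by the block decomposition: every 2-connected subgraph of $H$ lies within a single piece. An alternative approach would glue words along a shared vertex, using the known fact that the class of word-representable graphs is closed under identifying one vertex of each of two graphs. Applied iteratively over $u \in V(G_1)$, starting from a representant of $G_1$, this also proves the statement, but the orientation argument is shorter.
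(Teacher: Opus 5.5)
Your proof is correct, but it takes a different route from the paper. The paper gives no argument of its own and simply imports the statement from Kitaev--Lozin. There the rooted product is viewed as $G_1$ with a copy of $G_2$ glued at each vertex $(u,v_0)$. The result then follows by applying, once per vertex of $G_1$, the fact that word-representable graphs are closed under gluing two graphs at a single vertex, which is a word-level construction. This is essentially the alternative you mention in your last sentence.

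You instead argue in one stroke with semi-transitive orientations. Every vertex $(u,v)$ with $v\neq v_0$ has all its neighbours in $P_u$, so every cycle of $H$ lies inside one induced piece. Both a directed cycle and a shortcut (a directed path together with the edge $v_1v_k$) show up as an undirected cycle, so semi-transitivity is inherited from the pieces. The ``main obstacle'' you flag is already handled by your own lemma: a cycle containing no non-root vertex lies in $P_0$. So no separate block-decomposition argument is needed.

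Your orientation argument is shorter and proves more: a graph is word-representable as soon as each of its blocks is. This holds because the blocks partition the edge set, are induced, and every cycle lies in a single block, so their orientations can be chosen independently. The word-gluing route, on the other hand, constructs explicit representants. Carried out with uniform words, it also bounds the representation number of the product, which your argument does not track.
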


We recall the definition of the lexicographic product of two graphs below.

\begin{definition}
Let $G_1$ and $G_2$ be two graphs. The \emph{lexicographic product} $H = G_1 \circ G_2$ is the graph with vertex set $V(H) = V(G_1) \times V(G_2)$, where two vertices $(u,v)$ and $(x,y)$ are adjacent in $H$ if and only if either:
\begin{enumerate}
    \item $ux \in E(G_1)$, or
    \item $u = x$ and $vy \in E(G_2)$.
\end{enumerate}
\end{definition}

Alternatively, the lexicographic product can be visualized by replacing each vertex $v_i \in V(G_1)$ with a copy of $G_2$, denoted as the \emph{supervertex} 
\[
V_i = \{ (v_i, u) \mid u \in V(G_2) \} \subseteq V(H).
\] 
The subgraph of $H$ induced by $V_i$ is isomorphic to $G_2$. For each edge $v_i v_j \in E(G_1)$, every vertex in $V_i$ is adjacent to every vertex in $V_j$, whereas if $v_i v_j \notin E(G_1)$, there are no edges between $V_i$ and $V_j$. For each supervertex $V_i$, we refer to $v_i \in V(G_1)$ as its \emph{corresponding vertex} in $G_1$; conversely, we refer to $V_i$ as the \emph{corresponding supervertex} of $v_i$.

\begin{theorem}[\cite{dwary2025characterization}]
\label{lex-prod-char-word-re}
Let $G_1$ and $G_2$ be graphs. The lexicographic product $G_1 \circ G_2$ is word-representable if and only if $G_1$ is word-representable and $G_2$ is a comparability graph.
\end{theorem}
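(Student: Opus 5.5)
The plan is to prove the two directions separately: necessity by exhibiting forbidden induced subgraphs and invoking Theorem~\ref{comp-all-adj-wr-character}, and sufficiency by building a semi-transitive orientation and invoking Theorem~\ref{wrg=semi}. Throughout, I read the statement as assuming that $G_1$ has at least one edge.

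\textbf{A necessary caveat.} If $E(G_1)=\emptyset$, then $G_1\circ G_2$ is a disjoint union of copies of $G_2$. By Theorem~\ref{unionofwrgiswrg} this union is word-representable as soon as $G_2$ is, even when $G_2$ is not a comparability graph (e.g.\ $G_2=C_5$). So the ``only if'' part genuinely needs an edge in $G_1$, which is why I adopt that assumption.

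\textbf{Necessity.} Fix any $b\in V(G_2)$. The set $\{(u,b): u\in V(G_1)\}$ induces a copy of $G_1$ in $H=G_1\circ G_2$. Since word-representability is hereditary, $G_1$ must be word-representable.

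Next take an edge $uv\in E(G_1)$ and consider $S=\{(u,b)\}\cup V_v$, where $V_v$ is the supervertex of $v$. In $H[S]$ the vertex $(u,b)$ is adjacent to every vertex of $V_v$, and $H[V_v]\cong G_2$. Theorem~\ref{comp-all-adj-wr-character} then says $H[S]$ is word-representable if and only if $G_2$ is a comparability graph. Since $H[S]$ is an induced subgraph of $H$, heredity forces $G_2$ to be a comparability graph.

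\textbf{Sufficiency.} By Theorem~\ref{wrg=semi}, take a semi-transitive orientation $D_1$ of $G_1$ and a transitive orientation $D_2$ of $G_2$. Orient $H$ as follows:
\begin{itemize}
\item an edge between different supervertices, $(u,a)(x,c)$ with $ux\in E(G_1)$, is oriented $(u,a)\to(x,c)$ iff $u\to x$ in $D_1$;
\item an edge inside a supervertex, $(u,a)(u,c)$, is oriented $(u,a)\to(u,c)$ iff $a\to c$ in $D_2$.
\end{itemize}
Project any directed walk of $H$ to $G_1$ by $(u,a)\mapsto u$. Steps inside a supervertex become stationary, so after deleting repetitions the projection is a directed walk in $D_1$.

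\emph{Acyclicity.} A directed cycle that stays in one supervertex contradicts acyclicity of $D_2$. A cycle that leaves some supervertex projects to a closed directed walk in $D_1$ with at least one arc, contradicting acyclicity of $D_1$.

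\emph{No shortcuts.} Let $P=w_1\to\cdots\to w_k$ be a directed path in $H$ with $w_1\to w_k$. The argument has three steps.
\begin{enumerate}
\item \emph{Structure of the projection.} Since $D_1$ is acyclic, the projection of $P$ visits distinct vertices $p_1,\dots,p_m$ of $G_1$. Hence the vertices of $P$ lying in each supervertex form one consecutive block.
\item \emph{The one-block case.} If $m=1$, then $P$ lies inside one copy of $G_2$. Transitivity of $D_2$ gives $w_i\to w_j$ for all $i<j$.
\item \emph{The several-block case.} If $m\ge 2$, the arc $w_1\to w_k$ goes between different supervertices, so $p_1\to p_m$ in $D_1$ and $p_1\to\cdots\to p_m$ is a directed path in $D_1$. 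Semi-transitivity of $D_1$ gives $p_i\to p_j$ for all $i<j$. Therefore every pair $w_i,w_j$ ($i<j$) in different blocks satisfies $w_i\to w_j$ by the lexicographic rule. Pairs in the same block are joined by a directed subpath inside one copy of $G_2$, so transitivity of $D_2$ gives $w_i\to w_j$.
\end{enumerate}
In both cases no arc is missing, so $P$ is not a shortcut.

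\textbf{Main obstacle.} The delicate step is the no-shortcut verification: specifically, showing that each supervertex's vertices occur consecutively on $P$, and that the collapsed projection is an honest directed path in $D_1$ equipped with the arc $p_1\to p_m$. Both facts rest on the acyclicity of $D_1$, and I would establish them carefully before applying semi-transitivity.
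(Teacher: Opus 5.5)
Your proof is correct. The necessity argument (the induced copy of $G_1$ on $V(G_1)\times\{b\}$, and the induced subgraph on $\{(u,b)\}\cup V_v$ with a universal vertex fed into Theorem~\ref{comp-all-adj-wr-character}) checks out. So does the sufficiency argument: acyclicity and the no-shortcut condition are both reduced to $D_1$ and $D_2$ through the consecutive-block structure of a directed path. This covers every case, including $m=2$ and paths confined to a single supervertex.

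The paper gives no proof to compare against. It imports the statement from \cite{dwary2025characterization}, and its only addition is the remark that the ``only if'' direction needs $E(G_1)\neq\emptyset$. That is exactly the caveat you isolated, with the same $C_5$ example.

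A common word-level alternative for sufficiency is module substitution. Take a $k$-uniform representant of $G_1$ and a permutational $k$-representant of $G_2$, padded to a common $k$. Then replace the $i$-th occurrence of each vertex $u$ by the $i$-th permutation, relabelled inside $\{u\}\times V(G_2)$. That route produces explicit representants. Your orientation argument avoids the uniformity bookkeeping and handles all supervertices at once.

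Your construction also gives a small bonus. If $D_1$ is chosen transitive, the same composite orientation is transitive. This yields the ``if'' half of Theorem~\ref{lex-prod-char-compara} with no extra work.
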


\begin{remark}
We note that Theorem~\ref{lex-prod-char-word-re}, as formulated in~\cite{dwary2025characterization}, requires the additional condition that $G_1$ is a non-empty graph (i.e., $G_1$ contains at least one edge). Indeed, if $G_1$ is an edgeless graph on $n$ vertices ($n \ge 1$) and $G_2$ is the cycle graph $C_5$, then $G_2$ is word-representable but not a comparability graph. However, the lexicographic product $G_1 \circ G_2$ is simply the disjoint union of $n$ copies of $C_5$. By Theorem~\ref{unionofwrgiswrg}, the disjoint union of word-representable graphs remains word-representable, which provides a counterexample to the necessity of $G_2$ being a comparability graph when $G_1$ is edgeless. Consequently, the condition that $G_1$ contains at least one edge is necessary for the theorem to hold.
\end{remark}

\begin{theorem}[\cite{dwary2025characterization}]
\label{lex-prod-char-compara}
Let $G_1$ and $G_2$ be graphs. The lexicographic product $G_1 \circ G_2$ is a comparability graph if and only if $G_1$ and $G_2$ are comparability graphs.
\end{theorem}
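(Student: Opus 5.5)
The plan is to prove the two directions separately. Necessity follows from heredity. Sufficiency follows from an explicit transitive orientation of $G_1 \circ G_2$ assembled from transitive orientations of the factors.

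For necessity, suppose $H = G_1 \circ G_2$ is a comparability graph. Fix any $v \in V(G_2)$. The set $\{(u,v) : u \in V(G_1)\}$ induces a copy of $G_1$ in $H$, since for distinct $u,x$ the vertices $(u,v)$ and $(x,v)$ are adjacent exactly when $ux \in E(G_1)$. Similarly, fix $u \in V(G_1)$; the supervertex $\{(u,y) : y \in V(G_2)\}$ induces a copy of $G_2$. The class of comparability graphs is hereditary, so both $G_1$ and $G_2$ are comparability graphs.

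For sufficiency, let $T_1$ and $T_2$ be transitive orientations of $G_1$ and $G_2$. Orient each edge of $H$ as follows:
\begin{itemize}
\item an edge $(u,v)(x,y)$ with $ux \in E(G_1)$ is oriented $(u,v) \to (x,y)$ iff $u \to x$ in $T_1$;
\item an edge $(u,v)(u,y)$ with $vy \in E(G_2)$ is oriented $(u,v) \to (u,y)$ iff $v \to y$ in $T_2$.
\end{itemize}
Every edge of $H$ is of exactly one of these two types, and each type receives a single direction. So this is a well-defined orientation.

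It remains to check transitivity. Suppose $(a_1,b_1) \to (a_2,b_2) \to (a_3,b_3)$; we need $(a_1,b_1) \to (a_3,b_3)$. There are four cases.
\begin{enumerate}
\item Both arcs are of the first type. Then $a_1 \to a_2 \to a_3$ in $T_1$, so $a_1 \to a_3$ in $T_1$ by transitivity of $T_1$. In particular $a_1 \ne a_3$ (this also needs $T_1$ acyclic, which a transitive orientation of a simple graph is). Hence $(a_1,b_1) \to (a_3,b_3)$ in $H$.
\item The first arc is of the first type and the second of the second type. Then $a_2 = a_3$ and $a_1 \to a_2$ in $T_1$, so $a_1 \to a_3$ and the required arc exists.
\item The first arc is of the second type and the second of the first type. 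This is symmetric to case 2: $a_1 = a_2 \to a_3$.
\item Both arcs are of the second type. Then $a_1 = a_2 = a_3$ and $b_1 \to b_2 \to b_3$ in $T_2$. Transitivity of $T_2$ gives $b_1 \to b_3$, so $(a_1,b_1) \to (a_1,b_3)$.
\end{enumerate}

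No step is a genuine obstacle. The only point needing care is case 1: one must observe that $a_1 \ne a_3$, so that the resulting arc is of the first type rather than a loop.
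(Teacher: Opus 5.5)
Your proof is correct in both directions. The paper does not prove this statement: it imports it from \cite{dwary2025characterization} and uses it as a black box. So there is no in-paper argument to compare against, and your write-up works as a self-contained justification.

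Your necessity direction is the induced-copy argument the paper uses repeatedly for its lower bounds (for example in Theorem~\ref{gen-thm-multi-repno}), combined with heredity of comparability graphs. Because the supervertex copy of $G_2$ is always induced, this direction needs no hypothesis that $G_1$ has an edge. That is the key difference from the word-representability analogue, Theorem~\ref{lex-prod-char-word-re}, which does need such a hypothesis, and your argument handles it correctly.

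Your sufficiency direction is the standard lexicographic-sum-of-posets orientation, and the four-case transitivity check is complete. One small simplification: in case~1 you do not need acyclicity of $T_1$ to exclude $a_1 = a_3$. If $a_1 = a_3$, the single edge $a_1a_2$ would be oriented in both directions, which no orientation allows. The same remark gives $b_1 \ne b_3$ in case~4, which you use there without stating it.
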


\begin{definition}
Let $G$ be a graph and $k$ a positive integer. The \emph{$k$-th lexicographic power} of $G$, denoted $G^{[k]}$, is defined recursively as
\[
G^{[1]} = G, \quad G^{[k]} = G^{[k-1]} \circ G \text{ for } k \ge 2,
\]
where $\circ$ denotes the lexicographic product.
\end{definition}

\begin{remark}
Since the lexicographic product is associative, $G^{[k]}$ is well-defined. Equivalently, one may write $G^{[k]} = G \circ G^{[k-1]}$.  
\end{remark}

\begin{definition}
Let $G_1$ and $G_2$ be graphs, and let $H = G_1 \circ G_2$ be their lexicographic product. For any subgraph $G_{1}'$ of $G_{1}$, the \emph{lexicographic map} of $G_1'$ in $H$, denoted by $\mathcal{L}_{H}(G_1')$, is the spanning subgraph of $H$ with vertex set $V(\mathcal{L}_{H}(G_1')) = V(H)$ and edge set 
\[
E(\mathcal{L}_{H}(G_1')) = \{ (u,x)(v,y) \mid uv \in E(G_1') \text{ and } x,y \in V(G_2) \}.
\]
\end{definition}

\begin{remark}
\label{remark-lex-map}   
Structurally, the lexicographic map $\mathcal{L}_{H}(G_1)$ is isomorphic to the lexicographic product $G_1 \circ \overline{K_n}$, where $\overline{K_n}$ is the empty (edgeless) graph on $n = |V(G_2)|$ vertices. 
\end{remark}

The following theorem follows from Theorem~\ref{lex-prod-char-word-re}, Theorem~\ref{lex-prod-char-compara}, Remark~\ref{remark-lex-map}, and the fact that the complement graph of $K_{n}$, for any integer $n$, is a comparability graph.

\begin{theorem}
\label{thm:lexmap-merged}
Let $H' = \mathcal{L}_H(G_1')$. Then $H'$ is word-representable (resp. a comparability graph) if and only if $G_1'$ is word-representable (resp. a comparability graph).
\end{theorem}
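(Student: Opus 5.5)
By Remark~\ref{remark-lex-map}, $H'=\mathcal{L}_H(G_1')$ is isomorphic to a lexicographic product. The plan is to make that isomorphism explicit and then apply Theorems~\ref{lex-prod-char-word-re} and~\ref{lex-prod-char-compara} with second factor $\overline{K_n}$, where $n=|V(G_2)|$.

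\textbf{Step 1: the isomorphism.} The edges of $H'$ are exactly the pairs $(u,x)(v,y)$ with $uv\in E(G_1')$. Vertices $(u,x)$ with $u\in V(G_1)\setminus V(G_1')$ are isolated in $H'$. So
\[
H'\cong (G_1'\circ \overline{K_n}) \,\cup\, I,
\]
where $I$ is an edgeless graph on the leftover vertices. The map $(u,x)\mapsto(u,x)$ is the identification, since the only adjacencies in $G_1'\circ\overline{K_n}$ come from clause~1 of the definition of the lexicographic product. Clause~2 is vacuous because $\overline{K_n}$ has no edges.

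\textbf{Step 2: removing the isolated part.} Adding isolated vertices preserves word-representability, by Theorem~\ref{word-rep-components-thm}. It also preserves comparability, since a transitive orientation is unaffected. Conversely, both classes are hereditary. Hence $H'$ lies in either class if and only if $G_1'\circ\overline{K_n}$ does.

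\textbf{Step 3: applying the characterizations.} $\overline{K_n}$ is a comparability graph, via the vacuous orientation. For comparability, Theorem~\ref{lex-prod-char-compara} gives directly that $G_1'\circ\overline{K_n}$ is a comparability graph if and only if $G_1'$ is.

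For word-representability, Theorem~\ref{lex-prod-char-word-re} gives that $G_1'\circ\overline{K_n}$ is word-representable if and only if $G_1'$ is word-representable. The caveat in the subsequent remark (that $G_1'$ must be non-empty) is harmless here. If $G_1'$ is edgeless, then both $G_1'$ and $H'$ are edgeless, hence word-representable and comparability graphs, so the equivalence holds trivially.

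\textbf{Main obstacle.} The only delicate point is this bookkeeping: the edgeless case of Theorem~\ref{lex-prod-char-word-re}, and $G_1'$ being possibly non-spanning in $G_1$. Neither requires real work. As a sanity check that avoids the cited theorem, the direction ``$G_1'$ word-representable $\Rightarrow$ $H'$ word-representable'' can be done directly. Take a semi-transitive orientation of $G_1'$ and orient $(u,x)\to(v,y)$ whenever $u\to v$. Each supervertex is an independent set whose vertices are twins, so any directed path or shortcut in $H'$ projects to one in $G_1'$ with consecutive vertices distinct. The converse follows because $G_1'$ embeds as an induced subgraph of $H'$, obtained by choosing one copy from each supervertex. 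The same projection and embedding arguments handle the comparability case.
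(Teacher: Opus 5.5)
Your proposal is correct and follows the paper's route. Like the paper, you identify $\mathcal{L}_H(G_1')$ with $G_1'\circ\overline{K_n}$ via Remark~\ref{remark-lex-map} and apply Theorems~\ref{lex-prod-char-word-re} and~\ref{lex-prod-char-compara}, using that $\overline{K_n}$ is a comparability graph; your extra handling of isolated vertices (for non-spanning $G_1'$) and of the edgeless case, which the paper leaves implicit, is accurate.
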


\begin{definition}
\label{defn-special}
Let $G_{1}$ and $G_{2}$ be graphs, and let $H = G_{1} \circ G_{2}$ denote their lexicographic product. A subgraph $H'$ of $H$ is called a \emph{special subgraph} if it can be expressed as $H' = G_{1}' \circ G_{2}'$, where $G_{1}'$ is a word-representable subgraph of $G_{1}$ and $G_{2}'$ is a spanning comparability subgraph of $G_{2}$.
\end{definition}

\begin{remark}
If $G_{1}$ is word-representable and $G_{2}$ is a comparability graph, then $H = G_{1} \circ G_{2}$ is itself a special graph.
\end{remark}

\begin{remark}
\label{lex-map-special-connection-remark}
Let $H = G_1 \circ G_2$, let $G_1'$ be a word-representable spanning subgraph of $G_1$, and let $G_2'$ be a spanning comparability subgraph of $G_2$. The special subgraph $H' = G_1' \circ G_2'$ is precisely the union of the lexicographic map $\mathcal{L}_H(G_1')$ and the disjoint union of copies of $G_2'$ across all supervertices of $H$.
\end{remark}

As an application of Theorem~\ref{lex-prod-char-word-re} and Theorem~\ref{lex-prod-char-compara}, we obtain the following observation on special subgraphs of lexicographic products.

\begin{observation}
\label{special-word-rep-comp-char}
Let $G_{1}$ and $G_{2}$ be graphs, and let $H = G_{1} \circ G_{2}$. Any special subgraph $H_{s}$ of $H$ is word-representable. Moreover, if $H_{s} = G_{1}' \circ G_{2}'$, where both $G_{1}'$ and $G_{2}'$ are comparability graphs, then $H_{s}$ is a comparability graph.
\end{observation}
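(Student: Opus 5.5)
The plan is to reduce both claims of Observation~\ref{special-word-rep-comp-char} directly to the two characterizations of lexicographic products, Theorem~\ref{lex-prod-char-word-re} and Theorem~\ref{lex-prod-char-compara}.

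Write $H_s = G_1' \circ G_2'$, where $G_1'$ is a word-representable subgraph of $G_1$ and $G_2'$ is a spanning comparability subgraph of $G_2$. Here I read $G_1' \circ G_2'$ as a graph in its own right, on the vertex set $V(G_1') \times V(G_2)$. This is a subgraph of $H$, since $G_1' \subseteq G_1$ and $G_2' \subseteq G_2$ give an inclusion of edge sets.

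For the first claim, I split on whether $G_1'$ has an edge.
\begin{itemize}
\item If $G_1'$ has at least one edge, Theorem~\ref{lex-prod-char-word-re} applies in the form proved in~\cite{dwary2025characterization}. Since $G_1'$ is word-representable and $G_2'$ is a comparability graph, $H_s$ is word-representable.
\item If $G_1'$ is edgeless, the remark following Theorem~\ref{lex-prod-char-word-re} explains that the theorem need not apply. In this case $H_s$ is just a disjoint union of $|V(G_1')|$ copies of $G_2'$. Each copy is a comparability graph and hence word-representable, so Theorem~\ref{unionofwrgiswrg} shows $H_s$ is word-representable.
\end{itemize}
This edgeless case is the only delicate point, and it only matters for the direction we need ("if"). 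Note that the "if" direction of Theorem~\ref{lex-prod-char-word-re} is unaffected by the edgeless issue.

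For the second claim, assume both $G_1'$ and $G_2'$ are comparability graphs. Theorem~\ref{lex-prod-char-compara} then gives immediately that $G_1' \circ G_2'$ is a comparability graph. This theorem has no edgeless restriction: if $G_1'$ is edgeless, the product is a disjoint union of comparability graphs, which is again a comparability graph, since transitive orientations can be taken componentwise.

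No real obstacle is expected. The only care needed is the edgeless case of $G_1'$, which is handled as above.
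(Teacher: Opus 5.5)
Your proposal is correct and follows the paper's approach: the paper obtains this observation directly from Theorem~\ref{lex-prod-char-word-re} (for word-representability) and Theorem~\ref{lex-prod-char-compara} (for the comparability claim), without writing out a separate proof. Your explicit treatment of an edgeless $G_1'$ via Theorem~\ref{unionofwrgiswrg} is a harmless extra precaution that the paper leaves implicit here, though it handles edgeless factors the same way in the proof of Theorem~\ref{gen-thm-multi-repno}.
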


We recall the definition of the corona product of two graphs below.

\begin{definition}
Let $G_{1}$ and $G_{2}$ be graphs. The \emph{corona product} $H = G_{1} \odot G_{2}$ is the graph obtained by taking one copy of $G_{1}$ and, for each vertex $v \in V(G_{1})$, a copy of $G_{2}$, denoted by $G_{2}^{v}$, and joining $v$ to every vertex of $G_{2}^{v}$. Formally, the vertex set of $H$ is
\[
V(H) = V(G_{1}) \cup \bigcup_{v \in V(G_{1})} V(G_{2}^{v}),
\]
and the edge set of $H$ is
\[
E(H) = E(G_{1}) \cup \bigcup_{v \in V(G_{1})} E(G_{2}^{v}) \cup \bigcup_{v \in V(G_{1})} \{vu : u \in V(G_{2}^{v})\}.
\]
Each $G_{2}^{v}$ is called the \emph{copy of $G_{2}$ attached to $v$}.
\end{definition}

\begin{theorem}[\cite{Corona-product}]
\label{corona-prod-chara-word}
Let $G_1$ and $G_2$ be graphs. The corona product $G_1 \odot G_2$ is word-representable if and only if $G_1$ is word-representable and $G_2$ is a comparability graph.
\end{theorem}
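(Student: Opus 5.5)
We prove the two directions separately. Necessity comes from hereditary properties; sufficiency comes from an explicit semi-transitive orientation, via Theorem~\ref{wrg=semi}.

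\emph{Necessity.} Suppose $H = G_1 \odot G_2$ is word-representable. The copy of $G_1$ inside $H$ is an induced subgraph, since $H$ adds no edges between vertices of $G_1$. Because word-representability is hereditary, $G_1$ is word-representable. Now fix any $v \in V(G_1)$ and look at the induced subgraph $H[\{v\} \cup V(G_2^v)]$. Here $v$ is adjacent to every vertex of $G_2^v$, so it is a universal vertex of this subgraph. This subgraph is word-representable, again by heredity. Theorem~\ref{comp-all-adj-wr-character} then says that removing $v$ leaves a comparability graph, and what remains is $G_2^v \cong G_2$. (We assume $V(G_1)\neq\emptyset$.)

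\emph{Sufficiency.} Suppose $G_1$ is word-representable and $G_2$ is a comparability graph. By Theorem~\ref{wrg=semi}, $G_1$ has a semi-transitive orientation; fix one. Orient each copy $G_2^v$ by a transitive orientation of $G_2$. Orient every attaching edge as $v \to u$ for $u \in V(G_2^v)$.

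The key structural fact is this: a vertex of $G_2^v$ has out-neighbours only inside $G_2^v$, and its only in-neighbours outside $G_2^v$ come from $v$. Hence every directed path in $H$ has the form (directed path in $G_1$) followed possibly by (directed path inside a single copy $G_2^v$). In the second case, the path enters the copy from its last $G_1$-vertex, which must be $v$.

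\emph{Acyclicity.} A directed cycle cannot contain a copy vertex, because once a path enters a copy it can never leave it. So any cycle would lie entirely in $G_1$, and the orientation of $G_1$ is acyclic.

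\emph{No shortcuts.} Let $P = v_1 \to \dots \to v_k$ be a directed path with $v_1 \to v_k$. We split into three cases according to where $v_1$ and $v_k$ lie.
\begin{enumerate}
\item If $v_k \in V(G_1)$, then the whole path lies in $G_1$, and semi-transitivity of $G_1$ gives all edges $v_i \to v_j$ with $i<j$.
\item If $v_k \in V(G_2^v)$ and $v_1 \in V(G_2^v)$, then the whole path lies in $G_2^v$. Transitivity of the orientation of $G_2^v$ gives all required edges.
\item If $v_k \in V(G_2^v)$ and $v_1 \notin V(G_2^v)$, the edge $v_1 v_k$ forces $v_1 = v$. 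Since $P$ starts at $v$ and ends in the copy, its $G_1$-part is just $v$. Otherwise the path would return to $v$ later, which is impossible by acyclicity. So $v_2,\dots,v_k$ all lie in $G_2^v$. Transitivity supplies every edge $v_i \to v_j$ with $2 \le i<j$, and $v \to v_j$ holds for all $j$ because $v$ is joined to its whole copy.
\end{enumerate}
Thus the orientation is semi-transitive, and $H$ is word-representable by Theorem~\ref{wrg=semi}.

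The main point needing care is the case analysis for shortcuts, in particular the third case. It is resolved entirely by the structural fact that directed paths cannot leave a copy of $G_2$.
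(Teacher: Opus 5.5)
Your proof is correct. The paper does not prove this statement itself: it imports it from \cite{Corona-product} and uses it as a black box, for instance in the proof of Theorem~\ref{corona-gen-thm}. So there is no in-paper argument to compare against.

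Your argument is self-contained relative to the paper's preliminaries. Necessity uses only heredity plus Theorem~\ref{comp-all-adj-wr-character}, applied to the cone $H[\{v\}\cup V(G_2^v)]$. Sufficiency is a direct semi-transitive orientation. There, the observation that a directed path can enter a copy $G_2^v$ only from $v$ and can never leave it disposes of all three shortcut cases cleanly. In case 2 you are implicitly using the fact that in a transitive orientation every directed path has all its forward chords; this is a one-line induction and worth stating.

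There is an alternative route to sufficiency. Each cone $K_1 \vee G_2$ is word-representable by Theorem~\ref{comp-all-adj-wr-character}, and $H$ is obtained from $G_1$ by identifying each vertex $v$ with the apex of its own cone. Gluing word-representable graphs at a single vertex is a standard closure operation for word-representability (see \cite{book}). That route is shorter, but the gluing fact is not stated in the paper, so your orientation argument is the more self-contained choice. It is also essentially what underlies the gluing fact: a directed path visits the cut vertex at most once.

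Your caveat $V(G_1)\neq\emptyset$ is the right one. Compare the paper's remark after Theorem~\ref{lex-prod-char-word-re}, where $G_1$ needs an edge. Here a single vertex of $G_1$ already forces $G_2$ to be a comparability graph, because that vertex is universal to its own copy of $G_2$.
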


We now recall the definition of the tensor product of graphs.

\begin{definition}
\label{tensor-prod-defn}
Let $G_{1}$ and $G_{2}$ be graphs. The \emph{tensor product} $H = G_{1} \times G_{2}$ is the graph with vertex set $V(H) = V(G_{1}) \times V(G_{2})$, where two vertices $(u_{1}, v_{1})$ and $(u_{2}, v_{2})$ are adjacent in $H$ if and only if $u_{1}u_{2} \in E(G_{1})$ and $v_{1}v_{2} \in E(G_{2})$.
\end{definition}

The chromatic number of the tensor product satisfies the well-known upper bound $\chi(G_{1} \times G_{2}) \le \min \{\chi(G_{1}), \chi(G_{2})\}$, as established by Hedetniemi~\cite{Hedetniemi1967HomomorphismsOG}.

\begin{definition}
\label{def-strong-product}
Let $G_{1}$ and $G_{2}$ be graphs. The \emph{strong product} $H = G_{1} \boxtimes G_{2}$ is the graph with vertex set $V(H) = V(G_{1}) \times V(G_{2})$, where two distinct vertices $(u_{1}, v_{1})$ and $(u_{2}, v_{2})$ are adjacent in $H$ if and only if one of the following conditions holds:
\begin{enumerate}
    \item $u_{1} = u_{2}$ and $v_{1}v_{2} \in E(G_{2})$,
    \item $v_{1} = v_{2}$ and $u_{1}u_{2} \in E(G_{1})$, or
    \item $u_{1}u_{2} \in E(G_{1})$ and $v_{1}v_{2} \in E(G_{2})$.
\end{enumerate}
\end{definition}

Equivalently, as established by Sabidussi~\cite{sabidussi1959graph}, the edge set of the strong product can be expressed directly as the union of the edge sets of the Cartesian product and the tensor product:
\[
E(G_{1} \boxtimes G_{2}) = E(G_{1} \square G_{2}) \cup E(G_{1} \times G_{2}).
\]

\section{Cartesian product of graphs and multi-word-representation number}
\label{sec:cartesian}

In this section, we determine the exact value of the multi-word-representation number for the Cartesian product of two arbitrary graphs.

\begin{theorem}
\label{thm:cartesian-multi-rep}
Let $G_{1}$ and $G_{2}$ be graphs, and let $H = G_{1} \square G_{2}$. Then 
\[
\mu(H) = \max\{\mu(G_{1}), \mu(G_{2})\}.
\]
\end{theorem}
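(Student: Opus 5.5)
The proof splits into a lower bound and an upper bound.

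For the lower bound, fix any vertex $v\in V(G_2)$. The set $\{(u,v): u\in V(G_1)\}$ induces in $H$ a copy of $G_1$. Indeed, two such vertices share the second coordinate, so they are adjacent exactly when $u_1u_2\in E(G_1)$. Symmetrically, fixing $u\in V(G_1)$ yields an induced copy of $G_2$. By Remark~\ref{monotonic-mu-hereditary-remark}, $\mu(G_1)\le\mu(H)$ and $\mu(G_2)\le\mu(H)$, so $\mu(H)\ge \max\{\mu(G_1),\mu(G_2)\}$.

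For the upper bound, let $k=\max\{\mu(G_1),\mu(G_2)\}$. Take coverings $G_1=\bigcup_{i=1}^k A_i$ and $G_2=\bigcup_{i=1}^k B_i$ by word-representable graphs. If one of the graphs has fewer than $k$ members in its covering, pad it with edgeless graphs. By Remark~\ref{spanning-cover-remark} we may assume every $A_i$ is a spanning subgraph of $G_1$ and every $B_i$ is a spanning subgraph of $G_2$. Now set $H_i=A_i\,\square\,B_i$ for $1\le i\le k$. Each $H_i$ is a spanning subgraph of $H$, since every edge of $A_i\square B_i$ is either of the form $(u,v_1)(u,v_2)$ with $v_1v_2\in E(B_i)\subseteq E(G_2)$ or of the form $(u_1,v)(u_2,v)$ with $u_1u_2\in E(A_i)\subseteq E(G_1)$. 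By Theorem~\ref{cartesian-word-rep-thm}, each $H_i$ is word-representable.

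It remains to check that $\bigcup_i H_i=H$. Take any edge of $H$. If it has the form $(u,v_1)(u,v_2)$ with $v_1v_2\in E(G_2)$, then $v_1v_2\in E(B_i)$ for some $i$. Because $A_i$ is spanning, $u\in V(A_i)$, so the edge lies in $H_i$. The case $(u_1,v)(u_2,v)$ is symmetric, using that each $B_i$ is spanning. Hence $\mu(H)\le k$.

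The only delicate point is the spanning assumption combined with the padding. Both coverings need the same index set of size $k$, and each factor in $A_i\square B_i$ must contain all vertices, so that every fibre edge actually appears in some $H_i$. Edgeless graphs are word-representable, and Remark~\ref{spanning-cover-remark} handles adding isolated vertices, so this causes no difficulty. The rest is routine.
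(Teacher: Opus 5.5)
Your proposal is correct and follows essentially the same route as the paper. The lower bound uses the induced fibre copies of $G_1$ and $G_2$. The upper bound pads both coverings to $k$ spanning word-representable members and covers $H$ by $\bigcup_{i=1}^{k} A_i \square B_i$ via Theorem~\ref{cartesian-word-rep-thm}, and your attention to the spanning assumption is exactly the point the paper's proof relies on as well.
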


\begin{proof}
We first establish the lower bound. For any fixed vertex $v \in V(G_{2})$, the subgraph of $H$ induced by $V(G_{1}) \times \{v\}$ is isomorphic to $G_{1}$. By Remark~\ref{monotonic-mu-hereditary-remark}, it follows that $\mu(G_{1}) \leq \mu(H)$. Similarly, fixing a vertex $u \in V(G_{1})$ yields an induced subgraph isomorphic to $G_{2}$, implying $\mu(G_{2}) \leq \mu(H)$. Consequently,
\[
\mu(H) \geq \max\{\mu(G_{1}), \mu(G_{2})\}.
\]

Next, we establish the upper bound. Let $\mu(G_{1}) = k_{1}$ and $\mu(G_{2}) = k_{2}$, and let $k = \max\{k_{1}, k_{2}\}$. Then $G_{1}$ can be expressed as the union of $k_{1}$ word-representable subgraphs $A_{1}, \dots, A_{k_{1}}$, and $G_{2}$ as the union of $k_{2}$ word-representable subgraphs $B_{1}, \dots, B_{k_{2}}$.
We may assume, without loss of generality, that each $A_i$ and $B_j$ is spanning, by adding isolated vertices if necessary. For indices satisfying $k_{1} < i \leq k$ and $k_{2} < j \leq k$, define $A_{i} = (V(G_{1}), \emptyset)$ and $B_{j} = (V(G_{2}), \emptyset)$, each of which is an edgeless graph and hence trivially word-representable. Thus,
\[
G_{1} = \bigcup_{i=1}^{k} A_{i}, \quad G_{2} = \bigcup_{j=1}^{k} B_{j}.
\]

For each $i \in \{1, 2, \ldots, k\}$, define the graph $D_{i} = A_{i} \square B_{i}$. Since $A_{i}$ and $B_{i}$ are word-representable, each $D_{i}$ is word-representable by Theorem~\ref{cartesian-word-rep-thm}. Moreover, $V(D_{i}) = V(G_{1}) \times V(G_{2}) = V(H)$, so each $D_{i}$ is a spanning subgraph of $H$.

To show that $H = \bigcup_{i=1}^{k} D_i$, let $e = ((u_{1}, v_{1}), (u_{2}, v_{2})) \in E(H)$. By the definition of the Cartesian product, either:
\begin{enumerate}
    \item $v_{1} = v_{2}$ and $u_{1} u_{2} \in E(G_{1})$. Since $G_{1} = \bigcup_{i=1}^{k} A_{i}$, there exists $i \in \{1, \dots, k\}$ such that $u_{1} u_{2} \in E(A_{i})$. As $v_{1} \in V(B_{i})$, we obtain $e \in E(A_{i} \square B_{i}) = E(D_{i})$.
    
     \item $u_{1} = u_{2}$ and $v_{1} v_{2} \in E(G_{2})$. Since $G_{2} = \bigcup_{i=1}^{k} B_{i}$, there exists $i \in \{1, \dots, k\}$ such that $v_{1} v_{2} \in E(B_{i})$. As $u_{1} \in V(A_{i})$, we obtain $e \in E(A_{i} \square B_{i}) = E(D_{i})$.
\end{enumerate}
Hence every edge of $H$ belongs to some $D_i$, and since each $D_i$ is a spanning subgraph of $H$, we obtain
\[
H = \bigcup_{i=1}^{k} D_i.
\]
Thus, $\mu(H) \leq k = \max\{\mu(G_{1}), \mu(G_{2})\}$. Combining both bounds completes the proof.

\end{proof}

\section{Rooted product of graphs and multi-word-representation number}
\label{sec:rooted}

In this section, we determine the exact value of the multi-word-representation number for the rooted product of two graphs.

\begin{theorem}
\label{thm:rooted-multi-rep}
Let $G_{1}$ and $G_{2}$ be graphs, where $G_{2}$ is rooted at a designated vertex $v_{0}$, and let $H = G_{1} \diamond G_{2}$. Then
\[
\mu(H) = \max\{\mu(G_{1}), \mu(G_{2})\}.
\]
\end{theorem}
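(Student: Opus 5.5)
The plan is to follow the template of Theorem~\ref{thm:cartesian-multi-rep}, with Theorem~\ref{rooted-char-word-rep-thm} taking the place of Theorem~\ref{cartesian-word-rep-thm}.

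\textbf{Lower bound.} Both factors occur in $H$ as induced subgraphs.
\begin{itemize}
\item For a fixed $u \in V(G_1)$, the set $\{u\} \times V(G_2)$ induces a copy of $G_2$. Indeed, two of its vertices are adjacent only through rule~1, since rule~2 would need $v_1 = v_2 = v_0$, that is, equal vertices.
\item The set $V(G_1) \times \{v_0\}$ induces a copy of $G_1$. Here rule~1 never applies between distinct vertices of the set, because their first coordinates differ, and rule~2 gives exactly the edges of $G_1$.
\end{itemize}
By Remark~\ref{monotonic-mu-hereditary-remark} it follows that $\mu(H) \ge \max\{\mu(G_1),\mu(G_2)\}$.

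\textbf{Upper bound.} Let $k=\max\{\mu(G_1),\mu(G_2)\}$.
\begin{itemize}
\item Write $G_1=\bigcup_{i=1}^k A_i$ and $G_2=\bigcup_{i=1}^k B_i$ with every $A_i$ and $B_i$ word-representable and spanning. This uses Remark~\ref{spanning-cover-remark}, padding the shorter list with edgeless spanning graphs as in the Cartesian proof.
\item Since each $B_i$ is spanning, it contains $v_0$, so we may root $B_i$ at $v_0$ and set $D_i = A_i \diamond B_i$.
\item By Theorem~\ref{rooted-char-word-rep-thm}, each $D_i$ is word-representable.
\item Each $D_i$ has vertex set $V(G_1)\times V(G_2)=V(H)$.
\item Each $D_i$ is a subgraph of $H$, because $E(A_i)\subseteq E(G_1)$ and $E(B_i)\subseteq E(G_2)$ and the adjacency rules are monotone in the factor edge sets.
\end{itemize}

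\textbf{Covering.} It remains to check that every edge of $H$ lies in some $D_i$, which I do by the two adjacency rules.
\begin{itemize}
\item An edge of type~1, $(u,v_1)(u,v_2)$ with $v_1v_2\in E(G_2)$, has $v_1v_2\in E(B_i)$ for some $i$. As $u\in V(A_i)$, the edge lies in $D_i$.
\item An edge of type~2, $(u_1,v_0)(u_2,v_0)$ with $u_1u_2\in E(G_1)$, has $u_1u_2\in E(A_i)$ for some $i$. Since $v_0$ is the root of $B_i$, the edge lies in $D_i$.
\end{itemize}
Hence $H=\bigcup_{i=1}^k D_i$ and $\mu(H)\le k$.

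There is no real obstacle here. The one point to get right is that every $B_i$ must contain the root $v_0$, so that $A_i\diamond B_i$ is defined with the same root and its vertex set is all of $V(H)$. The spanning assumption of Remark~\ref{spanning-cover-remark} guarantees this, including for the padded edgeless graphs. One should also note that Theorem~\ref{rooted-char-word-rep-thm} holds for any choice of root, so rooting $B_i$ at $v_0$ is permitted.
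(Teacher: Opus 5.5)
Your proposal is correct and follows essentially the same route as the paper's proof: the same two induced copies $V(G_1)\times\{v_0\}$ and $\{u\}\times V(G_2)$ for the lower bound, and the same padded spanning covers with $D_i = A_i \diamond B_i$ rooted at $v_0$, justified by Theorem~\ref{rooted-char-word-rep-thm}, for the upper bound. Your extra checks go slightly beyond the paper's write-up but are consistent with it. These are the verification that the two vertex sets really induce $G_1$ and $G_2$, and that each $D_i$ is a subgraph of $H$.
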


\begin{proof}
We first establish the lower bound. Considering the root vertex $v_{0} \in V(G_{2})$, the subgraph of $H$ induced by the vertex set $V(G_{1}) \times \{v_{0}\}$ is isomorphic to $G_{1}$. By Remark~\ref{monotonic-mu-hereditary-remark}, it follows that $\mu(G_{1}) \leq \mu(H)$. Similarly, selecting any fixed vertex $u \in V(G_{1})$ yields an induced subgraph isomorphic to $G_{2}$ on the vertex set $\{u\} \times V(G_{2})$, implying $\mu(G_{2}) \leq \mu(H)$. Consequently,
\[
\mu(H) \geq \max\{\mu(G_{1}), \mu(G_{2})\}.
\]

Next, we establish the upper bound. Let $\mu(G_{1}) = k_{1}$ and $\mu(G_{2}) = k_{2}$, and let $k = \max\{k_{1}, k_{2}\}$. Then $G_{1}$ can be expressed as the union of $k_{1}$ word-representable subgraphs $A_{1}, \dots, A_{k_{1}}$, and $G_{2}$ as the union of $k_{2}$ word-representable subgraphs $B_{1}, \dots, B_{k_{2}}$.
We may assume, without loss of generality, that each $A_i$ and $B_j$ is spanning, by adding isolated vertices if necessary. For indices satisfying $k_{1} < i \leq k$ and $k_{2} < j \leq k$, define $A_{i} = (V(G_{1}), \emptyset)$ and $B_{j} = (V(G_{2}), \emptyset)$, each of which is an edgeless graph and hence trivially word-representable. Thus,
\[
G_{1} = \bigcup_{i=1}^{k} A_{i}, \quad G_{2} = \bigcup_{j=1}^{k} B_{j}.
\]

For each $i \in \{1, 2, \ldots, k\}$, define the graph $D_{i} = A_{i} \diamond B_{i}$, where each $B_i$ is rooted at the same vertex $v_0$ as $G_2$. Since $A_{i}$ and $B_{i}$ are word-representable, each $D_{i}$ is word-representable by Theorem~\ref{rooted-char-word-rep-thm}. Moreover, $V(D_{i}) = V(H)$, so each $D_{i}$ is a spanning subgraph of $H$.

To show that $H = \bigcup_{i=1}^{k} D_i$, let $e = ((u_{1}, v_{1}), (u_{2}, v_{2})) \in E(H)$. By the definition of the rooted product, either:
\begin{enumerate}
    \item $v_{1} = v_{2} = v_{0}$ and $u_{1} u_{2} \in E(G_{1})$. Since $G_{1} = \bigcup_{i=1}^{k} A_{i}$, there exists $i \in \{1, \dots, k\}$ such that $u_{1} u_{2} \in E(A_{i})$. As $v_{0} \in V(B_{i})$, we obtain $e \in E(A_{i} \diamond B_{i}) = E(D_{i})$.
    \item $u_{1} = u_{2}$ and $v_{1} v_{2} \in E(G_{2})$. Since $G_{2} = \bigcup_{i=1}^{k} B_{i}$, there exists $i \in \{1, \dots, k\}$ such that $v_{1} v_{2} \in E(B_{i})$. As $u_{1} \in V(A_{i})$, we obtain $e \in E(A_{i} \diamond B_{i}) = E(D_{i})$.
\end{enumerate}
Hence every edge of $H$ belongs to some $D_i$, and since each $D_i$ is a spanning subgraph of $H$, we obtain
\[
H = \bigcup_{i=1}^{k} D_i.
\]
Thus, $\mu(H) \leq k = \max\{\mu(G_{1}), \mu(G_{2})\}$. Combining both bounds completes the proof.

\end{proof}

\section{Lexicographic product of graphs and multi-word-representation number}
\label{sec:lexicographic}

In this section, we investigate the multi-word-representation number of the lexicographic product of graphs. Theorem~\ref{gen-thm-multi-repno} establishes a general upper bound for $\mu(G_1 \circ G_2)$ and provides an exact evaluation under a specified structural condition. Structurally, the characterization of word-representable graphs dictates that the base case $\mu(G_1 \circ G_2) = 1$ depends entirely on whether $G_2$ is a comparability graph. This dependency underscores the relevance of the cover number by comparability graphs, $\mathrm{cov}_{\mathrm{comp}}(G_2)$, as a critical parameter for analyzing the behavior of $\mu(G_1 \circ G_2)$. Guided by this parameter, we partition our analysis into two natural structural regimes: the first where $\mathrm{cov}_{\mathrm{comp}}(G_2) \le \max\{\mu(G_1), \mu(G_2)\}$, which yields an exact evaluation of $\mu(G_1 \circ G_2) = \max\{\mu(G_1), \mu(G_2)\}$, and the second where $\mathrm{cov}_{\mathrm{comp}}(G_2) > \max\{\mu(G_1), \mu(G_2)\}$, wherein a potential gap between the bounds persists.

\begin{theorem}
\label{gen-thm-multi-repno}
Let $G_1$ and $G_2$ be graphs, and let $H = G_1 \circ G_2$. Then
\[
\max\{\mu(G_1), \mu(G_2)\} \le \mu(H) \le \mu(G_1) + \mu(G_2).
\]
Moreover, if $\mathrm{cov}_{\mathrm{comp}}(G_2) \le \max\{\mu(G_1), \mu(G_2)\}$, then
\[
\mu(H) = \max\{\mu(G_1), \mu(G_2)\}.
\]
\end{theorem}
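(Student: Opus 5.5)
The lower bound is immediate from heredity, exactly as in the Cartesian case. Fix $v\in V(G_2)$: the set $V(G_1)\times\{v\}$ induces a copy of $G_1$ in $H$. Fix $u\in V(G_1)$: the supervertex $\{u\}\times V(G_2)$ induces a copy of $G_2$. Remark~\ref{monotonic-mu-hereditary-remark} then gives $\mu(H)\ge\max\{\mu(G_1),\mu(G_2)\}$.

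For the general upper bound $\mu(G_1)+\mu(G_2)$, I would separate the edges of $H$ into two types: edges between distinct supervertices, and edges inside a supervertex. Let $G_1=\bigcup_{i=1}^{k_1}A_i$ with each $A_i$ word-representable and spanning, where $k_1=\mu(G_1)$.
\begin{enumerate}
\item The lexicographic maps $\mathcal{L}_H(A_i)$, $1\le i\le k_1$, are word-representable by Theorem~\ref{thm:lexmap-merged}. Together they cover every edge $(u,x)(v,y)$ with $uv\in E(G_1)$.
\item Let $G_2=\bigcup_{j=1}^{k_2}B_j$ with each $B_j$ word-representable and spanning, where $k_2=\mu(G_2)$. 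For each $j$, take the disjoint union of $|V(G_1)|$ copies of $B_j$, one placed on each supervertex. This is a spanning subgraph of $H$, and it is word-representable by Theorem~\ref{unionofwrgiswrg}.
\end{enumerate}
These $k_1+k_2$ graphs cover all edges of $H$, so $\mu(H)\le\mu(G_1)+\mu(G_2)$.

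For the tight case, let $k=\max\{\mu(G_1),\mu(G_2)\}$ and suppose $\mathrm{cov}_{\mathrm{comp}}(G_2)\le k$. The idea is to pair each word-representable piece of $G_1$ with one comparability piece of $G_2$ inside a single special subgraph. By Remark~\ref{spanning-cover-remark}, pad both covers with spanning edgeless graphs to get $G_1=\bigcup_{i=1}^k A_i$ with $A_i$ word-representable and spanning, and $E(G_2)=\bigcup_{i=1}^k E(C_i)$ with each $C_i$ a spanning comparability subgraph of $G_2$. Define $D_i=A_i\circ C_i$. This is a special subgraph of $H$ in the sense of Definition~\ref{defn-special}, so it is word-representable by Observation~\ref{special-word-rep-comp-char}. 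Each $D_i$ is spanning, since $V(D_i)=V(G_1)\times V(G_2)$. The covering check has two cases.
\begin{enumerate}
\item An edge $(u,x)(v,y)$ with $uv\in E(G_1)$ lies in $D_i$ for any $i$ with $uv\in E(A_i)$, because $A_i\circ C_i$ joins all of $\{u\}\times V(G_2)$ to all of $\{v\}\times V(G_2)$.
\item An edge $(u,x)(u,y)$ with $xy\in E(G_2)$ lies in $D_i$ for any $i$ with $xy\in E(C_i)$, because $u\in V(A_i)$.
\end{enumerate}
Hence $\mu(H)\le k$, and combined with the lower bound this gives equality.

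The only step needing care is the degenerate case of Theorem~\ref{lex-prod-char-word-re}, which is only valid when the first factor has an edge. Some $A_i$ may be edgeless after padding. I do not expect this to be a real obstacle: then $A_i\circ C_i$ is a disjoint union of copies of the comparability graph $C_i$, which is word-representable by Theorem~\ref{unionofwrgiswrg}. I would note this explicitly, since the direction of the theorem used here (a word-representable first factor and a comparability second factor give a word-representable product) holds in either case. Likewise, the first factor of the lexicographic maps in step 1 may be edgeless; such a map has no edges and is trivially word-representable.
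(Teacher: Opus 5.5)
Your proposal is correct and matches the paper's proof. It uses the same induced copies of $G_1$ and $G_2$ for the lower bound, the same cover by the lexicographic maps $\mathcal{L}_H(A_i)$ together with supervertex-wise copies of the $B_j$ for the bound $\mu(G_1)+\mu(G_2)$, and the same products $A_i \circ C_i$ in the tight case. The only difference is organizational: you pad both covers to length $k$ and argue uniformly, whereas the paper splits into the cases $k_1 \ge k_2$ and $k_2 > k_1$. In the second case, for $i > k_1$, the paper uses disjoint unions of copies of $C_i$, which is exactly your $A_i \circ C_i$ with $A_i$ edgeless.
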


\begin{proof}
Let $\mu(G_1)=k_1$ and $\mu(G_2)=k_2$. By Definition~\ref{defn-covering-word-rep}, there exist word-representable subgraphs $A_1, \dots, A_{k_1}$ of $G_1$ and $B_1, \dots, B_{k_2}$ of $G_2$ such that 
\[
G_1 = \bigcup_{i=1}^{k_1} A_i 
\quad \text{and} \quad 
G_2 = \bigcup_{j=1}^{k_2} B_j.
\]
By adding isolated vertices if necessary, we may assume without loss of generality that each $A_i$ is a spanning subgraph of $G_1$ and each $B_j$ is a spanning subgraph of $G_2$. We construct $k_1+k_2$ word-representable subgraphs of $H$ that cover $H$:

\begin{enumerate}
    \item For each $i \in \{1, \dots, k_1\}$, let $X_i = \mathcal{L}_H(A_i)$. By Theorem~\ref{thm:lexmap-merged}, each $X_i$ is a word-representable spanning subgraph of $H$. Since $G_1 = \bigcup_{i=1}^{k_1} A_i$, it follows from the definition of the lexicographic map that 
    \[
    \bigcup_{i=1}^{k_1} X_i = \big( V(H), \{ (u,x)(v,y) \in E(H) \mid uv \in E(G_1) \} \big).
    \]
    
    \item For each $j \in \{1, \dots, k_2\}$, let $Y_j$ be the spanning subgraph of $H$ with edge set 
    \[
    E(Y_j) = \bigcup_{u \in V(G_1)} \{ (u,x)(u,y) \mid xy \in E(B_j) \}.
    \]
    Because the vertex set of $Y_j$ is $V(H)$ and the supervertices partition $V(H)$, the graph $Y_j$ is precisely a disjoint union of $|V(G_1)|$ copies of the word-representable graph $B_j$. By Theorem~\ref{unionofwrgiswrg}, each $Y_j$ is word-representable.
\end{enumerate}

By the definition of the lexicographic product, any edge $(u,x)(v,y) \in E(H)$ either satisfies $uv \in E(G_1)$ or satisfies $u=v$ with $xy \in E(G_2)$. Since $G_2 = \bigcup_{j=1}^{k_2} B_j$, it follows that 
\[
H = \left(\bigcup_{i=1}^{k_1} X_i\right) \cup \left(\bigcup_{j=1}^{k_2} Y_j\right).
\]
Because each constituent subgraph is word-representable, $H$ admits a covering by $k_1+k_2$ word-representable graphs, yielding $\mu(H) \le k_1+k_2$.

To establish the lower bound, observe that $H = G_1 \circ G_2$ contains induced subgraphs isomorphic to both $G_1$ and $G_2$. Specifically, choosing any fixed vertex $y \in V(G_2)$, the vertex subset $U_1 = \{ (u, y) \mid u \in V(G_1) \}$ contains exactly one vertex from each supervertex, and the induced subgraph $H[U_1]$ is isomorphic to $G_1$. Similarly, choosing any fixed vertex $u \in V(G_1)$, the supervertex $V_u = \{ (u, v) \mid v \in V(G_2) \}$ induces a subgraph $H[V_u]$ isomorphic to $G_2$. By Remark~\ref{monotonic-mu-hereditary-remark}, it follows immediately that $\mu(H) \ge \max\{\mu(G_1), \mu(G_2)\} = \max\{k_1, k_2\}$.

Now suppose $\mathrm{cov}_{\mathrm{comp}}(G_2) \le \max\{k_1,k_2\}$. We show that $\mu(H) \le \max\{k_1, k_2\}$ by analyzing two cases.

\medskip
\noindent
\textbf{Case 1:} $k_1 \ge k_2$. \\
In this case, $\mathrm{cov}_{\mathrm{comp}}(G_2) \le k_1$, meaning there exist comparability subgraphs $C_1, \dots, C_{k_1}$ of $G_2$ such that $G_2 = \bigcup_{i=1}^{k_1} C_i$. By adding isolated vertices if necessary, we may assume without loss of generality that each $C_i$ is a spanning subgraph of $G_2$.
For each $i \in \{1, \dots, k_1\}$, define $W_i = A_i \circ C_i$. If $A_i$ contains at least one edge, Theorem~\ref{lex-prod-char-word-re} ensures each $W_i$ is a word-representable subgraph of $H$. If $A_i$ is an edgeless graph, then by the definition of the lexicographic product, $W_i$ is precisely a disjoint union of $|V(G_1)|$ copies of the comparability graph $C_i$, which is word-representable by Theorem~\ref{unionofwrgiswrg}. Note that each $W_i$ is a spanning subgraph of $H$ since $V(W_i) = V(A_i) \times V(C_i) = V(G_1) \times V(G_2) = V(H)$.

The subgraphs $W_1, \dots, W_{k_1}$ cover all edges $(u,x)(v,y) \in E(H)$ where $uv \in E(G_1)$, as well as all edges satisfying $u=v$ with $xy \in \bigcup_{i=1}^{k_1} E(C_i)$. It follows that $H = \bigcup_{i=1}^{k_1} W_i$, implying $\mu(H) \le k_1 = \max\{k_1, k_2\}$.

\medskip
\noindent
\textbf{Case 2:} $k_2 > k_1$. \\
In this case, $\mathrm{cov}_{\mathrm{comp}}(G_2) \le k_2$, meaning there exist comparability subgraphs $C_1, \dots, C_{k_2}$ of $G_2$ such that $G_2 = \bigcup_{i=1}^{k_2} C_i$. By adding isolated vertices if necessary, we may assume without loss of generality that each $C_i$ is a spanning subgraph of $G_2$. We construct $k_2$ word-representable subgraphs $Z_1, \dots, Z_{k_2}$ of $H$ as follows:
\begin{enumerate}

    \item For each $i \in \{1, \dots, k_1\}$, let $Z_i = A_i \circ C_i$. If $A_i$ contains at least one edge, Theorem~\ref{lex-prod-char-word-re} ensures each $Z_i$ is a word-representable subgraph of $H$. If $A_i$ is an edgeless graph, $Z_i$ is a disjoint union of $|V(G_1)|$ copies of the comparability graph $C_i$, which is word-representable by Theorem~\ref{unionofwrgiswrg}. Note that each $Z_i$ is a spanning subgraph of $H$ since $V(Z_i) = V(A_i) \times V(C_i) = V(G_1) \times V(G_2) = V(H)$.
   
    \item For each $i \in \{k_1+1, \dots, k_2\}$, let $Z_i$ be the spanning subgraph of $H$ with edge set 
    \[
    E(Z_i) = \bigcup_{u \in V(G_1)} \{ (u,x)(u,y) \mid xy \in E(C_i) \}.
    \]
    Since the supervertices partition $V(H)$ and $C_i$ is a spanning subgraph of $G_2$, $Z_i$ is precisely a disjoint union of $|V(G_1)|$ copies of the comparability graph $C_i$. Because comparability graphs are word-representable, Theorem~\ref{unionofwrgiswrg} ensures that each $Z_i$ is word-representable.
\end{enumerate}
The subgraphs $Z_1, \dots, Z_{k_1}$ cover all edges $(u,x)(v,y) \in E(H)$ where $uv \in E(G_1)$, as well as all edges satisfying $u=v$ with $xy \in \bigcup_{i=1}^{k_1} E(C_i)$. The remaining edges satisfying $u=v$ with $xy \in \bigcup_{i=k_1+1}^{k_2} E(C_i)$ are covered by $Z_{k_1+1}, \dots, Z_{k_2}$. It follows that $H = \bigcup_{i=1}^{k_2} Z_{i}$, yielding $\mu(H) \le k_2 = \max\{k_1, k_2\}$. 
Since the matching lower bound $\mu(H) \ge \max\{k_1, k_2\}$ holds generally, the proof is complete.
\end{proof}

The structural characterization established in Theorem~\ref{gen-thm-multi-repno} leaves an open gap when $\operatorname{cov}_{\mathrm{comp}}(G_2) > \max\{\mu(G_1), \mu(G_2)\}$. To understand the behavior of this remaining regime, we examine the first case in which the regime's condition is met. Consider the case where $G_1$ is a word-representable graph containing at least one edge ($\mu(G_1) = 1$), and $G_2$ is a word-representable graph that is not a comparability graph ($\mu(G_2) = 1$). This implies that $\operatorname{cov}_{\mathrm{comp}}(G_2) \ge 2 > \max\{1,1\}$, perfectly satisfying the condition. By Theorem~\ref{lex-prod-char-word-re}, the resulting product graph $H = G_1 \circ G_2$ is non-word-representable, which implies $\mu(H) \ge 2$. Concurrently, our general upper bound dictates that $\mu(H) \le \mu(G_1) + \mu(G_2) = 1 + 1 = 2$. It follows that $\mu(H) = 2$. This demonstrates that the general additive upper bound of Theorem~\ref{gen-thm-multi-repno} is exactly tight when $G_1$ is word-representable, and $G_2$ is a non-comparability word-representable graph.

For the remaining cases where $\max\{\mu(G_1), \mu(G_2)\} \ge 2$, the exact behavior of the multi-word-representation number remains open. Motivated by this, we pose the following question.

\begin{question}\label{q:lex_product_gap}
Let $G_1$ and $G_2$ be graphs such that $\operatorname{cov}_{\mathrm{comp}}(G_2) > \max\{\mu(G_1), \mu(G_2)\}$. What is the exact value of $\mu(G_1 \circ G_2)$? In particular, for higher parameter values (where $\max\{\mu(G_1), \mu(G_2)\} \ge 2$):
\begin{enumerate}
    \item Can the value collapse to the lower bound, meaning $\mu(G_1 \circ G_2) = \max\{\mu(G_1), \mu(G_2)\}$ in any of these cases?
    \item Are there instances where the incremental value $\mu(G_1 \circ G_2) = \max\{\mu(G_1), \mu(G_2)\} + 1$ is achieved?
    \item Can the general additive upper bound $\mu(G_1 \circ G_2) = \mu(G_1) + \mu(G_2)$ become tight?
\end{enumerate}
\end{question}

A complete resolution to Question~\ref{q:lex_product_gap} would fully determine the multi-word-representation number for lexicographic products when $\operatorname{cov}_{\mathrm{comp}}(G_2) > \max\{\mu(G_1), \mu(G_2)\}$.

Theorem~\ref{gen-thm-multi-repno} establishes the general upper bound $\mu(G_1 \circ G_2) \le \mu(G_1) + \mu(G_2)$. For minimal non-word-representable graphs $G_1$ and $G_2$, we have $\mu(G_1) = \mu(G_2) = 2$ (since any such graph $G$ can be covered by the word-representable proper induced subgraph $G \setminus \{v\}$ and a spanning star graph centered at $v$), yielding a lower bound of $\max\{\mu(G_1), \mu(G_2)\} = 2$ and a general upper bound of $4$. Consequently, the value of $\mu(H)$ is bounded within the interval $[2, 4]$. It remains unresolved whether this class of graphs satisfies the exact evaluation condition of Theorem~\ref{gen-thm-multi-repno} (namely, $\operatorname{cov}_{\mathrm{comp}}(G_2) \le 2$, which forces $\mu(H) = 2$) or falls into the remaining open regime outlined in Question~\ref{q:lex_product_gap}. This classification depends directly on the following open problem.

\begin{question}
\label{q:minimal_non_wr_cover}
Let $G$ be a minimal non-word-representable graph. Is $\operatorname{cov}_{\mathrm{comp}}(G) \le 2$\textup{?}
\end{question}

While Question~\ref{q:minimal_non_wr_cover} remains open, the next result establishes a tighter upper bound for this class of graphs. By proving that $\mu(H) \le 3$, we reduce the upper bound from the general case of 4 to 3, narrowing the potential value of $\mu(H)$ to the set $\{2, 3\}$. However, the potential for $\mu(H)=2$ in instances where the condition in Question~\ref{q:minimal_non_wr_cover} is not satisfied remains an open and interesting direction for future study.

\begin{theorem}
Let $G_1$ and $G_2$ be minimal non-word-representable graphs, and let $H = G_1 \circ G_2$. Then $\mu(H) \le 3$.
\end{theorem}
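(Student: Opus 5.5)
The plan is to cover $H = G_1 \circ G_2$ by three explicit spanning subgraphs, splitting the edges of each factor in the natural way allowed by minimality. Fix a vertex $a \in V(G_1)$ and a vertex $b \in V(G_2)$. By minimality, $G_1 - a$ and $G_2 - b$ are word-representable. Let $S_a$ be the spanning star of $G_1$ consisting of all edges at $a$, and let $S_b$ be the spanning star of $G_2$ consisting of all edges at $b$. Then
\[
G_1 = (G_1 - a) \cup S_a, \qquad G_2 = (G_2 - b) \cup S_b,
\]
where $G_1 - a$ is made spanning by keeping $a$ as an isolated vertex, and similarly for $G_2 - b$. The key observation is that $S_b$ is a star, hence bipartite, hence a comparability graph. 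By contrast, $G_2 - b$ is only known to be word-representable.

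The three covering graphs are as follows.
\begin{enumerate}
\item $X_1 = (G_1 - a) \circ S_b$. This is a special subgraph in the sense of Definition~\ref{defn-special}, so it is word-representable by Observation~\ref{special-word-rep-comp-char}. In the edgeless corner case, $X_1$ is a disjoint union of copies of $S_b$, which is word-representable by Theorem~\ref{unionofwrgiswrg}. Because $a$ is isolated but present in $G_1 - a$, the graph $X_1$ contains every edge $(u,x)(v,y)$ with $uv \in E(G_1 - a)$. It also contains a copy of $S_b$ inside every supervertex, including $V_a$.
\item $X_2 = \mathcal{L}_H(S_a)$ together with a copy of $G_2 - b$ inside each supervertex $V_u$ with $u \neq a$.
\item $X_3$, the copy of $G_2 - b$ placed inside $V_a$ alone. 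This is word-representable as a disjoint union with isolated vertices.
\end{enumerate}

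Checking that these cover $H$ is routine. Edges between supervertices come from $E(G_1 - a) \cup E(S_a)$. Edges inside a supervertex come from $E(S_b) \cup E(G_2 - b)$, and those inside $V_a$ from $G_2 - b$ are handled by $X_3$.

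The main step is showing $X_2$ is word-representable, which I would do through Theorem~\ref{wrg=semi} by exhibiting a semi-transitive orientation. Components of $X_2$ not touching $V_a$ are single copies of $G_2 - b$, so they are fine by Theorem~\ref{word-rep-components-thm}. The remaining component consists of the independent set $V_a$ joined completely to the disjoint union of copies of $G_2 - b$ in $V_u$, $u \in N_{G_1}(a)$. Orient each copy by a semi-transitive orientation, and orient every edge out of $V_a$, so that $V_a$ consists of sources. This is acyclic.

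For shortcuts, note that a directed path can use at most one vertex of $V_a$, and only as its first vertex. After that it stays inside a single copy, since copies are mutually non-adjacent and $V_a$ receives no arcs. A shortcut whose path lies within one copy is excluded by that copy's semi-transitivity. If the path starts at $x \in V_a$, then $x$ is adjacent to every later vertex, so a missing arc could only lie inside the copy. In that case the subpath $v_2 \to \dots \to v_k$ together with the arc $v_2 \to v_k$, if present, would itself form a shortcut. I expect this shortcut analysis to be the only delicate point, in particular handling the case where the arc $v_2 \to v_k$ is absent. If needed, I would replace the argument by an explicit word: concatenate representants of the copies with occurrences of the letters of $V_a$ placed around them, in the style used for graphs with a universal vertex (Theorem~\ref{comp-all-adj-wr-character}).
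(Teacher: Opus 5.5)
There is a genuine gap: your $X_2$ is not word-representable in general, and the ``delicate'' case you flag is fatal rather than technical.

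Note first that $a$ has a neighbour $u$ in $G_1$; otherwise $G_1$ would be $G_1-a$ plus an isolated vertex, hence word-representable. In $\mathcal{L}_H(S_a)$, every $x\in V_a$ is adjacent to all of $V_u$. Your $X_2$ also carries the copy of $G_2-b$ inside $V_u$. So $X_2$ contains, as an induced subgraph, the vertex $x$ joined to a copy of $G_2-b$. By Theorem~\ref{comp-all-adj-wr-character}, that subgraph is word-representable only if $G_2-b$ is a comparability graph.

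Concretely, take $G_2=W_5$ and let $b$ be its hub. Then $G_2-b=C_5$, and $X_2$ contains an induced $W_5$.

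In orientation terms, making $x$ a source forces the orientation of each copy to be transitive. Every acyclic orientation of a non-comparability graph has a path $v_2\to v_3\to v_4$ with $v_2v_4\notin E$. Together with the arc $x\to v_4$, the path $x\to v_2\to v_3\to v_4$ is then a shortcut. This is exactly your case where ``$v_2\to v_k$ is absent.''

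Your fallback of writing an explicit word cannot rescue this, since Theorem~\ref{comp-all-adj-wr-character} is an equivalence. Choosing $b$ cleverly does not fix it in general either. A rim vertex of $W_5$ happens to leave a comparability graph, but nothing guarantees that an arbitrary minimal non-word-representable graph has a vertex whose deletion leaves a comparability graph.

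The repair is to keep the graph carrying the join out of $V_a$ free of internal edges:
\begin{itemize}
\item Take $X_2=\mathcal{L}_H(S_a)$ alone; it is word-representable by Theorem~\ref{thm:lexmap-merged}.
\item Let $X_3$ be the disjoint union of the copies of $G_2-b$ in all supervertices, including $V_a$; it is word-representable by Theorem~\ref{unionofwrgiswrg}.
\item Keep your $X_1=(G_1-a)\circ S_b$, which is correct as written.
\end{itemize}
These three graphs cover $H$: cross-edges come from $E(G_1-a)\cup E(S_a)$, and internal edges from $E(S_b)\cup E(G_2-b)$. This fixed version is, if anything, slightly simpler than the paper's proof, and it rests on the same principle.

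The paper's third graph $H_3$ is exactly the lexicographic map of the star at the distinguished vertex $v_r$, with no internal edges. The internal edges are split between the other two graphs:
\begin{itemize}
\item $H_1$ is the copy of $G_2-u^*$ in $V_r$, disjoint from $(G_1-v_r)\circ S_w$.
\item $H_2$ has no cross-edges at all: the star at $u^*$ in $V_r$ and copies of $G_2-w$ elsewhere.
\end{itemize}
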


\begin{proof}
Let $G_{1}$ be a graph on $n$ vertices with $V(G_{1}) = \{v_{1}, \ldots, v_{n}\}$, and let $G_{2}$ be a graph on $m$ vertices with $V(G_{2}) = \{u_{1}, \ldots, u_{m}\}$. By the definition of the lexicographic product, $H = G_1 \circ G_2$ is constructed by replacing each vertex $v_i \in V(G_1)$ with a supervertex $V_i$, where the induced subgraph $H[V_i]$ is isomorphic to $G_2$. We classify the edges of $H$ into two categories: \emph{internal edges} (edges within $H[V_i]$) and \emph{cross-edges} (edges connecting distinct supervertices $V_i$ and $V_j$ whenever $v_i v_j \in E(G_1)$).

We demonstrate that $\mu(H) \le 3$ by constructing three spanning subgraphs $H_1, H_2,$ and $H_3$ such that their edge sets satisfy $E(H) = \bigcup_{k=1}^3 E(H_k)$. We then establish that each $H_k$ is word-representable, which implies $\mu(H) \le 3$.

\medskip
\noindent
\textbf{Subgraph Construction:}
Let $v_r \in V(G_1)$ be a distinguished vertex, and let $V_r$ denote its corresponding supervertex in $H$. Fix an arbitrary vertex $w \in V(G_2)$. For every $v_i \in V(G_1)$ with $v_i \neq v_r$, let $r_i \in V_i$ denote the vertex $(v_i, w)$. 

Additionally, select an arbitrary vertex $u_k \in V(G_2)$, and let $u^* = (v_r, u_k)$ denote its exact copy within the supervertex $V_r$.
\begin{enumerate}
    \item \textbf{Subgraph $H_1$:} Define $E(H_1)$ to contain:
    \begin{itemize}
        \item All internal edges of the proper induced subgraph $H[V_r \setminus \{u^*\}]$;
        \item For each $v_i \in V(G_1) \setminus \{v_r\}$, the internal edges of $H[V_i]$ incident to the vertex $r_i$. This forms a star subgraph centered at $r_i$ (along with any isolated vertices not adjacent to $r_i$) in $H_1[V_i]$;
        \item All cross-edges of $H$ that are not incident to any vertex in $V_r$.
    \end{itemize}
    
    \item \textbf{Subgraph $H_2$:} Define $E(H_2)$ to contain all remaining internal edges of $H$ not allocated to $H_1$. Specifically, $E(H_2)$ consists of:
    \begin{itemize}
        \item The internal edges of $H[V_r]$ incident to $u^*$, which form a star subgraph centered at $u^*$ (along with any isolated vertices);
        \item For each $v_i \in V(G_1) \setminus \{v_r\}$, all internal edges of the proper induced subgraph $H[V_i \setminus \{r_i\}]$.
    \end{itemize}
    
    \item \textbf{Subgraph $H_3$:} Define $E(H_3)$ to contain all remaining cross-edges of $H$. These are precisely the cross-edges incident to the distinguished supervertex $V_r$.
\end{enumerate}

By construction, $E(H) = \bigcup_{k=1}^3 E(H_k)$. We now establish the word-representability of each subgraph.

\begin{lemma}
The subgraph $H_1$ is word-representable.
\end{lemma}
\begin{proof}
By construction, no cross-edges incident to $V_r$ are present in $H_1$. Thus, $H_1$ consists of two disconnected components:
\begin{itemize}
    \item \textbf{Component 1:} The subgraph $H_1[V_r]$. By construction, $u^*$ has no incident edges in $H_1$, making it an isolated vertex. Therefore, $H_1[V_r]$ is the disjoint union of $H_1[V_r \setminus \{u^*\}]$ and the isolated vertex $u^*$. Since $H_1[V_r \setminus \{u^*\}]$ is isomorphic to the proper induced subgraph $G_2[V(G_2) \setminus \{u_k\}]$ of the minimal non-word-representable graph $G_2$, it is word-representable. Because the disjoint union of word-representable graphs remains word-representable (Theorem \ref{unionofwrgiswrg}), $H_1[V_r]$ is word-representable.
    \item \textbf{Component 2:} The subgraph induced by $\bigcup_{v_i \neq v_r} V_i$. Within each subgraph $H_1[V_i]$, the internal edges constitute a star graph centered at $r_i$ (along with isolated vertices). Because each $r_i$ corresponds to the same vertex $w \in V(G_2)$, these subgraphs are identical. Thus, this component is isomorphic to the special subgraph $G_{1}' \circ S$, where $G_{1}' = G_1[V(G_1) \setminus \{v_r\}]$ and $S$ is the spanning star subgraph of $G_2$ centered at $w$. Since $G_1$ is minimal non-word-representable, its proper induced subgraph $G_1'$ is word-representable. Furthermore, because $S$ consists exclusively of edges incident to the single vertex $w$, it contains no cycles. As a forest, it is a comparability graph. Thus, by Definition \ref{defn-special}, this product graph is a special subgraph of $G_{1} \circ G_{2}$, and by Observation \ref{special-word-rep-comp-char}, it is word-representable.
\end{itemize}
By Theorem \ref{word-rep-components-thm}, $H_1$ is word-representable.
\end{proof}

\begin{lemma}
The subgraph $H_2$ is word-representable.
\end{lemma}
\begin{proof}
Since $H_2$ contains no cross-edges, it is the disjoint union of the induced subgraphs $H_2[V_i]$ for all $v_i \in V(G_1)$.
\begin{itemize}
    \item For $v_i = v_r$, $H_2[V_r]$ is a forest consisting of a star graph centered at $u^*$ and a set of isolated vertices. Every forest is a comparability graph and is therefore word-representable.
    \item For each $v_i \in V(G_1) \setminus \{v_r\}$, $r_i$ acts as an isolated vertex in $H_2[V_i]$. The remaining vertices induce a subgraph isomorphic to $G_2[V(G_2) \setminus \{w\}]$. Since $G_2$ is minimal non-word-representable, this proper induced subgraph is word-representable, and the inclusion of the isolated vertex $r_i$ preserves this property via disjoint union.
\end{itemize}
As the disjoint union of word-representable graphs is word-representable, $H_2$ is word-representable (Theorem \ref{unionofwrgiswrg}).
\end{proof}

\begin{lemma}
The subgraph $H_3$ is word-representable.
\end{lemma}
\begin{proof}
The edge set $E(H_3)$ consists precisely of all cross-edges incident to the distinguished supervertex $V_r$. The graph $H_3$ is the lexicographic map of a word-representable subgraph of $G_1$ (specifically, the spanning star subgraph $S_{G_1}$ of $G_1$, where $E(S_{G_1})$ consists exactly of all edges in $G_1$ incident to $v_r$, including any vertices not adjacent to $v_r$ as isolated vertices). Consequently, by Theorem \ref{thm:lexmap-merged}, $H_3$ is word-representable.
\end{proof}

Since $E(H) = \bigcup_{k=1}^3 E(H_k)$ and each subgraph $H_k$ is word-representable, we conclude that $\mu(H) \le 3$.
\end{proof}

Having established that $\mu(G_1 \circ G_2) \le 3$, it is natural to ask whether this bound is tight. This leads to the following open problem regarding the existence of minimal non-word-representable graphs whose lexicographic product attains this upper bound.

\begin{question}
\label{q:tightness_mu_3}
Do there exist minimal non-word-representable graphs $G_1$ and $G_2$ such that $\mu(G_1 \circ G_2) = 3$\textup{?}
\end{question}

\section{Lexicographic powers of graph and multi-word-representation number}
\label{sec:lexpower_and_mul}

In this section, we provide bounds on the multi-word-representation number for the lexicographic powers of a graph. By leveraging the known characterization of word-representability for lexicographic products, we first deduce the following characterization of word-representability for lexicographic powers.

\begin{theorem}
\label{lex-power-charact}
Let $G$ be a graph and let $k \ge 2$. Then:
\begin{enumerate}
    \item If $G$ is a comparability graph, then $G^{[k]}$ is a comparability graph and hence word-representable.
    \item If $G$ is not a comparability graph, then $G^{[k]}$ is not word-representable.
\end{enumerate}
\end{theorem}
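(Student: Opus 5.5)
Both parts follow by induction on $k$ from the recursion $G^{[k]} = G^{[k-1]} \circ G$, using Theorem~\ref{lex-prod-char-compara} and Theorem~\ref{lex-prod-char-word-re}.

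For part (1), suppose $G$ is a comparability graph. The base case $G^{[1]} = G$ is immediate. For the inductive step, assume $G^{[k-1]}$ is a comparability graph. Then $G^{[k]} = G^{[k-1]} \circ G$ is a lexicographic product of two comparability graphs, so it is a comparability graph by Theorem~\ref{lex-prod-char-compara}. Every comparability graph is word-representable, which completes part (1).

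For part (2), suppose $G$ is not a comparability graph. I would write $G^{[k]} = G^{[k-1]} \circ G$ with $k \ge 2$ and apply the ``only if'' direction of Theorem~\ref{lex-prod-char-word-re}: the second factor $G$ is not a comparability graph, so the product is not word-representable. No induction is needed here.

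The main obstacle is the hypothesis flagged in the remark after Theorem~\ref{lex-prod-char-word-re}: the ``only if'' direction requires the first factor $G^{[k-1]}$ to contain at least one edge. This is where I would spend the care.
\begin{itemize}
\item If $G$ is edgeless, it is trivially a comparability graph. So a non-comparability $G$ has at least one edge $uv$.
\item Then $G^{[k-1]}$ contains $G$ as an induced subgraph (fix coordinates, as in the proof of Theorem~\ref{gen-thm-multi-repno}). Hence $G^{[k-1]}$ is non-empty and the characterization applies.
\end{itemize}
As a backup route, I would argue directly from heredity. Since $G^{[k]} = G \circ G^{[k-1]}$ by associativity, one can fix the first $k-2$ coordinates along an edge of $G$ to obtain an induced copy of $G \circ G$. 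The non-empty first factor $G$ and the non-comparability second factor $G$ make $G \circ G$ non-word-representable by Theorem~\ref{lex-prod-char-word-re}. Word-representability is hereditary, so $G^{[k]}$ is not word-representable either.
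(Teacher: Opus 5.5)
Your proof is correct and follows the paper's argument: induction via Theorem~\ref{lex-prod-char-compara} for part~1, and a direct application of Theorem~\ref{lex-prod-char-word-re} to $G^{[k]} = G^{[k-1]} \circ G$ for part~2. Your explicit check that $G^{[k-1]}$ has an edge is a useful addition: a non-comparability graph cannot be edgeless, and $G$ is an induced subgraph of $G^{[k-1]}$. The paper's proof applies the characterization without verifying this non-emptiness hypothesis, even though its own remark flags it.
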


\begin{proof}
Suppose first that $G$ is a comparability graph. We show by induction on $k \ge 2$ that $G^{[k]}$ is a comparability graph, and hence word-representable. 
For the base case $k=2$, we have $G^{[2]} = G \circ G$, which is a comparability graph by Theorem~\ref{lex-prod-char-compara}. 
For the inductive step, assume that $G^{[r]}$ is a comparability graph for some $r \ge 2$. By definition, $G^{[r+1]} = G^{[r]} \circ G$. Since both $G^{[r]}$ and $G$ are comparability graphs, it follows again from Theorem~\ref{lex-prod-char-compara} that $G^{[r+1]}$ is a comparability graph. This completes the induction.

Now suppose that $G$ is not a comparability graph. For $k=2$, $G^{[2]} = G \circ G$ is not word-representable by Theorem~\ref{lex-prod-char-word-re}. For $k > 2$, writing $G^{[k]} = G^{[k-1]} \circ G$ by definition, and noting that $G$ is not a comparability graph, Theorem~\ref{lex-prod-char-word-re} implies that $G^{[k]}$ is not word-representable for every $k \ge 2$.
\end{proof}

From Theorem~\ref{lex-power-charact}, if $G$ is a comparability graph, then $\mu(G^{[k]}) = 1$ for all $k \geq 1$. In contrast, when $G$ is not a comparability graph, the theorem provides no information about the multi-word-representation number of $G^{[k]}$ for $k \geq 2$.

To address this case, we establish a general upper bound in terms of a structural parameter of $G$. 

\begin{theorem}
\label{lex-power-mu-gen-bound}
For any graph $G$ and any integer $k \ge 2$,
\[
\mu(G^{[k]}) \le \mathrm{cov}_{\mathrm{comp}}(G).
\]
\end{theorem}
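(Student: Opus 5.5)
Let $c=\mathrm{cov}_{\mathrm{comp}}(G)$ and fix spanning comparability subgraphs $C_1,\dots,C_c$ of $G$ with $G=\bigcup_{i=1}^c C_i$, spanning by Remark~\ref{spanning-cover-remark}. The plan is to write $G^{[k]}$ as the union of the $c$ spanning subgraphs
\[
D_i = C_i^{[k]} \qquad (1 \le i \le c),
\]
where each $C_i^{[k]}$ is a spanning subgraph of $G^{[k]}$ on the same vertex set $V(G)^k$. For each $D_i$, part~1 of Theorem~\ref{lex-power-charact} (or $D_i=C_i$ itself when $k=1$) says that $D_i$ is a comparability graph, hence word-representable. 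This gives $\mu(G^{[k]})\le c$ once we know the $D_i$ cover every edge.

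The main work, which I expect to be the only real obstacle, is the covering claim. I would identify vertices of $G^{[k]}$ with $k$-tuples $(x_1,\dots,x_k)\in V(G)^k$, using associativity. By unwinding the recursive definition, two tuples $x\neq y$ are adjacent in $G^{[k]}$ if and only if, at the first index $t$ where $x_t\neq y_t$, we have $x_ty_t\in E(G)$. I would prove this by a short induction on $k$ using $G^{[k]}=G\circ G^{[k-1]}$. The same characterization holds for each $C_i^{[k]}$ with $E(C_i)$ in place of $E(G)$. Now take an edge $xy$ of $G^{[k]}$ with first differing index $t$. Since $x_ty_t\in E(G)=\bigcup_i E(C_i)$, there is some $i$ with $x_ty_t\in E(C_i)$, and then $xy\in E(C_i^{[k]})$. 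Conversely, each $C_i^{[k]}$ is a subgraph of $G^{[k]}$, since $E(C_i)\subseteq E(G)$. Therefore $G^{[k]}=\bigcup_{i=1}^c C_i^{[k]}$.

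A subtle point to handle is that the same index $i$ must be used in every coordinate of the product; mixing covers across levels would not be needed. This works precisely because the first-differing-coordinate description means only one coordinate's edge ever matters for a given pair. If $c=1$, then $G$ is a comparability graph and the bound $\mu=1$ agrees with Theorem~\ref{lex-power-charact}, so no separate case is needed.
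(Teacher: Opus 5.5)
Your proposal is correct and takes essentially the paper's route: the paper also covers $G^{[k]}$ by lexicographic powers of the spanning comparability cover graphs. It builds them inductively as $A_i \circ B_i$, where $B_i$ is the previous stage's cover graph, which unwinds to exactly your $C_i^{[k]}$, and it invokes Theorem~\ref{lex-prod-char-compara} to get comparability. The only difference is cosmetic: you verify the covering through the first-differing-coordinate description of adjacency, while the paper checks the two edge types of $G \circ G^{[r]}$ at each inductive step.
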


\begin{proof}
Let $c = \mathrm{cov}_{\mathrm{comp}}(G)$, and let $\{A_1, A_2, \dots, A_c\}$ be a collection of comparability subgraphs of $G$ such that $G = \bigcup_{i=1}^c A_i$. By adding isolated vertices if necessary, we may assume without loss of generality that each $A_i$ is a spanning subgraph. We proceed by induction on $k \ge 2$ to show that $G^{[k]}$ can be covered by $c$ spanning comparability subgraphs.

\medskip
\noindent
\textbf{Base case: $k=2$.}  
Consider $G^{[2]} = G \circ G$. For each $i \in \{1, \dots, c\}$, let $C_i = A_i \circ A_i$. Since $A_i$ is a spanning subgraph of $G$, $C_i$ is a spanning subgraph of $G^{[2]}$. Because $A_i$ is a comparability graph, Theorem~\ref{lex-prod-char-compara} ensures each $C_i$ is a comparability graph. 

Let $e = (u,x)(v,y)$ be an arbitrary edge in $G^{[2]}$. By the definition of the lexicographic product, two cases arise:
\begin{enumerate}
    \item If $uv \in E(G)$, then $uv \in E(A_i)$ for some $i$. Since $A_i$ is a spanning subgraph, $x, y \in V(A_i)$ for all $x,y \in V(G)$, implying $e \in E(C_i)$.
    \item If $u = v$ and $xy \in E(G)$, then $xy \in E(A_i)$ for some $i$. Since $A_i$ is a spanning subgraph, $u \in V(A_i)$, implying $e \in E(C_i)$.
\end{enumerate}
Therefore, every edge of $G^{[2]}$ belongs to some $C_i$, so $G^{[2]} = \bigcup_{i=1}^c C_i$.

\medskip
\noindent
\textbf{Inductive step.} 
Assume that for some $r \ge 2$, $G^{[r]} = \bigcup_{i=1}^c B_i$, where each $B_i$ is a spanning comparability subgraph of $G^{[r]}$. Consider $G^{[r+1]} = G \circ G^{[r]}$. For each $i \in \{1, \dots, c\}$, let $C_i = A_i \circ B_i$. Since $A_i$ and $B_i$ are spanning subgraphs of $G$ and $G^{[r]}$ respectively, $C_i$ is a spanning subgraph of $G^{[r+1]}$. Because both $A_i$ and $B_i$ are comparability graphs, Theorem~\ref{lex-prod-char-compara} ensures each $C_i$ is a comparability graph.

Let $e = (u,x)(v,y)$ be an arbitrary edge in $G^{[r+1]}$. By the definition of the lexicographic product, two cases arise:
\begin{enumerate}
    \item If $uv \in E(G)$, then $uv \in E(A_i)$ for some $i$. Since $B_i$ is a spanning subgraph of $G^{[r]}$, we have $x, y \in V(B_i)$, which implies $e \in E(C_i)$.
    \item If $u = v$ and $xy \in E(G^{[r]})$, then $xy \in E(B_i)$ for some $i$. Since $A_i$ is a spanning subgraph of $G$, we have $u \in V(A_i)$, which implies $e \in E(C_i)$.
\end{enumerate}
Therefore, every edge of $G^{[r+1]}$ belongs to some $C_i$, so $G^{[r+1]} = \bigcup_{i=1}^c C_i$.

\medskip
By induction, $G^{[k]}$ is a union of at most $c$ spanning comparability subgraphs for all $k \ge 2$. Because every comparability graph is word-representable, this collection also constitutes a valid cover of $G^{[k]}$ by at most $c$ word-representable subgraphs. Therefore, the multi-word-representation number $\mu(G^{[k]})$ is bounded above by $c$, and we conclude that $\mu(G^{[k]}) \le c = \mathrm{cov}_{\mathrm{comp}}(G)$.
\end{proof}

Theorem~\ref{lex-power-mu-gen-bound} provides a general upper bound on the multi-word-representation number of $G^{[k]}$ for any integer $k \ge 2$. A key feature of this bound is its independence from $k$. Consequently, this bound is particularly powerful when $\mathrm{cov}_{\mathrm{comp}}(G)$ is small or bounded by a constant, or when $k$ grows arbitrarily large.

However, when $\mathrm{cov}_{\mathrm{comp}}(G)$ is large, the above bound may not be optimal, especially for small values of $k$. In such cases, a better bound can be obtained under the additional assumption that $G$ is a word-representable non-comparability graph. This is established in the following theorem.

\begin{theorem}
Let $G$ be a word-representable graph that is not a comparability graph, and let $k \ge 2$. Then
\[
\mu(G^{[k]}) \le k.
\]
\end{theorem}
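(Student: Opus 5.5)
We argue by induction on $k$, using the writing $G^{[k]} = G \circ G^{[k-1]}$ and peeling off one layer at a time. The key observation is that the lexicographic map of a word-representable graph is word-representable (Theorem~\ref{thm:lexmap-merged}). Hence the edges of $G \circ G^{[k-1]}$ that go between distinct supervertices always form a single word-representable spanning subgraph $\mathcal{L}_H(G)$, because $G$ itself is word-representable. The remaining edges lie inside the supervertices, and they form a disjoint union of $|V(G)|$ copies of $G^{[k-1]}$.

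For the base case $k=2$, the general bound $\mu(G \circ G) \le \mu(G) + \mu(G) = 2$ of Theorem~\ref{gen-thm-multi-repno} applies directly. Alternatively, we can cover $G\circ G$ by $\mathcal{L}_H(G)$ together with the disjoint union of copies of $G$, which is word-representable by Theorem~\ref{unionofwrgiswrg}.

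For the inductive step, suppose $G^{[k-1]} = \bigcup_{j=1}^{k-1} B_j$ with each $B_j$ a spanning word-representable subgraph. For each $j$, let $Y_j$ be the spanning subgraph of $H = G \circ G^{[k-1]}$ consisting of a copy of $B_j$ inside every supervertex. Each $Y_j$ is a disjoint union of word-representable graphs, so it is word-representable by Theorem~\ref{unionofwrgiswrg}. Let $X = \mathcal{L}_H(G)$. Every edge $(u,x)(v,y)$ of $H$ satisfies either $uv\in E(G)$, in which case it lies in $X$, or $u=v$ and $xy \in E(G^{[k-1]})$, in which case it lies in some $Y_j$. Thus $H = X \cup Y_1 \cup \dots \cup Y_{k-1}$ is a cover by $k$ word-representable graphs. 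Equivalently, the inequality of Theorem~\ref{gen-thm-multi-repno} gives $\mu(G^{[k]}) \le \mu(G) + \mu(G^{[k-1]}) = 1 + (k-1)$.

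There is no real obstacle here; the only care needed is bookkeeping. First, we must use the associativity remark to justify writing $G^{[k]} = G \circ G^{[k-1]}$ rather than $G^{[k-1]}\circ G$, so that the outer factor is the word-representable graph $G$ and not $G^{[k-1]}$, which is non-word-representable by Theorem~\ref{lex-power-charact}. Second, $\mathcal{L}_H(G)$ is word-representable because $G$ is, by Theorem~\ref{thm:lexmap-merged}. If $G$ is edgeless this last point is trivial, and in that case $G$ is in fact a comparability graph, so it is excluded by hypothesis anyway.
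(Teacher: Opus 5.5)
Your proposal is correct and follows essentially the same argument as the paper. You induct via $G^{[k]} = G \circ G^{[k-1]}$, cover all cross-supervertex edges by the single word-representable lexicographic map $\mathcal{L}_H(G)$, and cover the internal edges by $k-1$ disjoint unions of copies of the $B_j$; your base case also matches the paper's. Your one-line shortcut through the additive bound $\mu(G_1 \circ G_2) \le \mu(G_1) + \mu(G_2)$ is likewise valid.
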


\begin{proof}
We prove that $\mu(G^{[k]}) \le k$ for all $k \ge 2$ by induction on $k$.

\medskip
\noindent
\textbf{Base case: $k=2$.}  
We have $G^{[2]} = G \circ G$. By Theorem~\ref{lex-power-charact}, it follows that $\mu(G^{[2]}) \ge 2$. We now demonstrate that $\mu(G^{[2]}) \le 2$ by covering $G^{[2]}$ with two spanning word-representable subgraphs. Let $A = \mathcal{L}_{G^{[2]}}(G)$, and let $B$ be the spanning subgraph of $G^{[2]}$ with vertex set $V(B) = V(G^{[2]})$ and edge set 
\[
E(B) = \bigcup_{u \in V(G)} \{ (u,x)(u,y) \mid xy \in E(G) \}.
\]
By construction, every edge of $G^{[2]}$ belongs to either $A$ or $B$, so $G^{[2]} = A \cup B$. By Theorem~\ref{thm:lexmap-merged}, the graph $A$ is word-representable. Let $V_u = \{(u,x) \mid x \in V(G)\}$. Since $\{V_u : u \in V(G)\}$ is a partition of $V(G^{[2]})$, the graph $B$ is a vertex-disjoint union of copies of $G$. Hence, $B$ is a disjoint union of word-representable graphs and is therefore word-representable by Theorem~\ref{unionofwrgiswrg}. It follows that $\mu(G^{[2]}) \le 2$, implying $\mu(G^{[2]}) = 2$.

\medskip
\noindent
\textbf{Inductive step.} 
Assume that $\mu(G^{[r]}) \le r$ for some $r \ge 2$, meaning $G^{[r]} = \bigcup_{i=1}^{r} A_i$ for some spanning word-representable subgraphs $A_1, \dots, A_r$ of $G^{[r]}$. Consider $G^{[r+1]} = G \circ G^{[r]}$. We construct $r+1$ spanning subgraphs $H_1, \dots, H_{r+1}$ of $G^{[r+1]}$ with vertex set $V(G^{[r+1]})$ as follows:

\begin{enumerate}
    \item For each $i \in \{1, \dots, r\}$, let $H_i$ be the spanning subgraph with edge set 
    \[
    E(H_i) = \bigcup_{u \in V(G)} \{ (u,x)(u,y) \mid xy \in E(A_i) \}.
    \]
    Let $V_u = \{(u,x) \mid x \in V(G^{[r]})\}$. Since $\{V_u : u \in V(G)\}$ is a partition of $V(G^{[r+1]})$, each $H_i$ is a vertex-disjoint union of copies of $A_i$. Hence, each $H_i$ is word-representable by Theorem~\ref{unionofwrgiswrg}.
    
    \item Let $H_{r+1} = \mathcal{L}_{G^{[r+1]}}(G)$. By Theorem~\ref{thm:lexmap-merged}, since $G$ is word-representable, $H_{r+1}$ is word-representable.
\end{enumerate}

By construction, any edge $(u,x)(v,y) \in E(G^{[r+1]})$ satisfying $u=v$ requires that $xy \in E(G^{[r]})$, which is covered by the subgraph union $\bigcup_{i=1}^{r} H_i$. Alternatively, if $u \neq v$, then $uv \in E(G)$, and the edge is covered by $H_{r+1}$. Thus, $G^{[r+1]} = \bigcup_{j=1}^{r+1} H_j$. Since each $H_j$ is a spanning word-representable subgraph of $G^{[r+1]}$, we conclude that $\mu(G^{[r+1]}) \le r+1$, completing the inductive step.
\end{proof}

\section{Asymptotic bounds on $\tau(n)$ and the WB problem}
\label{sec:tau:n}

In this section, we investigate the asymptotic behavior of the extremal function $\tau(n)$ and apply our findings to the Word-representable Bipartition (WB) problem. We first construct a family of graphs using the lexicographic powers of a carefully chosen base graph to establish a sublinear general upper bound for $\tau(n)$. We then highlight a sharp structural contrast between $\tau(n)$ and the multi-word-representation number $\mu(G)$ for this family. Utilizing this extremal family, we prove that the WB problem has a negative answer for all sufficiently large $n$. Finally, we contrast our sublinear upper bound by providing lower bounds for $\tau(n)$ in general graphs and a polynomial lower bound restricted to perfect graphs.

It was shown in~\cite{multi-word-rep} that there exist three graphs on $8$ vertices that do not contain any word-representable induced subgraph on $7$ vertices. Let $H$ be one such graph, as depicted in Figure~\ref{fig:tau8neq7}. By this selection, the base graph satisfies $|V(H)| = 8$ and $\eta(H) \le 6$. 

We first determine an upper bound on the maximum order of a word-representable induced subgraph in the $k$-th lexicographic power of $H$.

\begin{lemma}
\label{etaHk}
For every integer $k \ge 1$, we have $\eta(H^{[k]}) \le 6^{k}$.
\end{lemma}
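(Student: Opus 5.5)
Induction on $k$, writing $H^{[k]} = H \circ H^{[k-1]}$, so that $V(H^{[k]})$ splits into $8$ supervertices $V_1,\dots,V_8$, one for each vertex $h_1,\dots,h_8$ of $H$. Each $V_i$ induces a copy of $H^{[k-1]}$. The base case $k=1$ is just $\eta(H)\le 6$, which holds by the choice of $H$.

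For the inductive step, let $S\subseteq V(H^{[k]})$ be a representable set, and write $S_i = S\cap V_i$. Let $I=\{i : S_i\neq\emptyset\}$.
\begin{itemize}
\item The set $\{h_i : i\in I\}$ is representable in $H$. Picking one vertex from each nonempty $S_i$ gives an induced subgraph of $H^{[k]}[S]$ isomorphic to $H[\{h_i: i\in I\}]$, and word-representability is hereditary. Hence $|I|\le 6$.
\item Each $S_i$ is representable in $H[V_i]\cong H^{[k-1]}$, again by heredity, so $|S_i|\le \eta(H^{[k-1]})\le 6^{k-1}$ by the induction hypothesis.
\end{itemize}
Combining the two, $|S| = \sum_{i\in I}|S_i| \le 6\cdot 6^{k-1} = 6^k$.

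The only point needing care is the heredity step in the first item: the transversal must be an induced subgraph with the correct adjacencies. This holds because vertices in distinct supervertices $V_i,V_j$ are adjacent exactly when $h_ih_j\in E(H)$, so any transversal induces a copy of $H[\{h_i: i\in I\}]$. With that, the argument is just the multiplicativity of the bound, and I do not expect any genuine obstacle.
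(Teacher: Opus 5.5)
Your proposal is correct and matches the paper's proof essentially step for step. Like the paper, you induct via $H^{[k]} = H \circ H^{[k-1]}$, use a transversal of the nonempty intersections $S \cap V_i$ (which induces a copy of $H[I]$) to show $S$ meets at most $6$ supervertices, and bound each intersection by $6^{k-1}$ using the induction hypothesis; the only cosmetic difference is that you write $|I| \le \eta(H) \le 6$ directly, where the paper argues by contradiction from seven chosen indices.
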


\begin{proof}
We proceed by induction on $k$. For $k=1$, we have $H^{[1]} = H$. By our choice of $H$, it contains no word-representable induced subgraph on $7$ vertices, which directly implies $\eta(H) \le 6$.

Assume that $\eta(H^{[r]}) \le 6^{r}$ for some $r \ge 1$, and consider $H^{[r+1]} = H \circ H^{[r]}$. By the structure of the lexicographic product, $H^{[r+1]}$ consists of $8$ supervertices $V_1, \dots, V_8$, where each induced subgraph $H^{[r+1]}[V_i]$ is isomorphic to $H^{[r]}$, and adjacency between distinct supervertices is determined by the edges of $H$.

Let $S$ be a representable set in $H^{[r+1]}$. For each $i$, the set $S \cap V_i$ is representable in $H^{[r+1]}[V_i]$, so $|S \cap V_i| \le 6^{r}$ by the induction hypothesis. Since the supervertices $V_1, \dots, V_8$ form a partition of $V(H^{[r+1]})$, we have
\[
|S| = \sum_{i=1}^{8} |S \cap V_i|.
\]

We claim that $S$ intersects at most $6$ supervertices. Suppose, for contradiction, that $S$ intersects at least $7$ distinct supervertices. Let $I \subseteq \{1,\dots,8\}$ be a set of seven indices such that $S \cap V_i \neq \varnothing$ for every $i \in I$. For each $i \in I$, choose a vertex $u_i \in S \cap V_i$, and let $U=\{u_i : i\in I\}$.

By the definition of the lexicographic product, for distinct $i,j\in I$, the vertices $u_i$ and $u_j$ are adjacent in $H^{[r+1]}$ if and only if the corresponding vertices $i$ and $j$ are adjacent in $H$. Therefore,
\[
H^{[r+1]}[U]\cong H[I].
\]

Since $S$ is representable, the induced subgraph $H^{[r+1]}[S]$ is word-representable. As the class of word-representable graphs is hereditary and $U\subseteq S$, it follows that $H^{[r+1]}[U]$ is word-representable. Consequently, $H[I]$ is word-representable.

However, by the choice of $H$, no induced subgraph of $H$ on $7$ vertices is word-representable. Since $|I|=7$, the graph $H[I]$ is not word-representable, a contradiction. Therefore, $S$ intersects at most $6$ supervertices. Hence,
\[
|S| \le 6 \cdot 6^{r} = 6^{r+1}.
\]
Thus, $\eta(H^{[r+1]}) \le 6^{r+1}$, completing the induction.
\end{proof}

\begin{theorem}
\label{tau8k-6k}
For any positive integer $k \ge 1$, $\tau(8^{k}) \le 6^{k}$.
\end{theorem}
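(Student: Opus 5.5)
The plan is to exhibit, for each $k \ge 1$, a single graph on exactly $8^k$ vertices whose largest word-representable induced subgraph has at most $6^k$ vertices. The natural witness is the lexicographic power $H^{[k]}$ of the fixed $8$-vertex graph $H$ from Figure~\ref{fig:tau8neq7}. By Definition~\ref{def:tau-function}, $\tau(n)$ is the minimum of $\eta(G)$ over all graphs $G$ on $n$ vertices. So it suffices to verify two facts: that $|V(H^{[k]})| = 8^k$, and that $\eta(H^{[k]}) \le 6^k$.

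For the order, I will argue by induction on $k$. For $k=1$ we have $|V(H)| = 8$. For the inductive step, $V(H^{[k]}) = V(H^{[k-1]}) \times V(H)$ by the definition of the lexicographic product. Hence $|V(H^{[k]})| = 8^{k-1}\cdot 8 = 8^k$. The same count also follows from the recursion $H^{[k]} = H \circ H^{[k-1]}$ noted in the remark on associativity.

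For the bound on $\eta$, I will simply invoke Lemma~\ref{etaHk}, which gives $\eta(H^{[k]}) \le 6^k$ for every $k \ge 1$. Combining the two facts gives
\[
\tau(8^k) = \min\{\eta(G) \mid |V(G)| = 8^k\} \le \eta(H^{[k]}) \le 6^k .
\]

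There is no real obstacle, since the substantive work is already done in Lemma~\ref{etaHk}. The only point needing care is the vertex count: $\tau$ is evaluated at exactly $8^k$, so the product construction must be checked to give precisely that many vertices, not merely at most that many. This is immediate from the Cartesian-product vertex set.
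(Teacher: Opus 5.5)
Your proposal is correct and is essentially the paper's proof: take $G = H^{[k]}$, note that $|V(G)| = 8^k$, and combine $\tau(8^k) \le \eta(H^{[k]})$ with Lemma~\ref{etaHk}. Your inductive vertex count just spells out what the paper states in one line.
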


\begin{proof}
Let $G = H^{[k]}$ be the $k$-th lexicographic power of the base graph $H$ defined above. Since $|V(H)| = 8$, the graph $G$ has exactly $8^k$ vertices.

From the definition of $\tau(n)$, we have $\tau(n) \le \eta(G)$ for every graph $G$ on $n$ vertices. Applying this to $G = H^{[k]}$ and utilizing Lemma~\ref{etaHk}, we obtain
\[
\tau(8^{k}) \le \eta(H^{[k]}) \le 6^{k}.
\]
\end{proof}

\begin{figure}[h]
\centering
\begin{tikzpicture}[scale=0.75, transform shape]
    \tikzset{hollow/.style = {shape=circle,draw=black,fill=white,minimum size=1.5em,text=black}}
    \tikzset{edge/.style = {draw=black, - = latex'}}
    
    \node[hollow] (1) at  (0, 0) {$1$};
    \node[hollow] (2) at  (90:2cm) {$2$};
    \node[hollow] (3) at  (162:2cm) {$3$};
    \node[hollow] (4) at  (234:2cm) {$4$};
    \node[hollow] (5) at  (306:2cm) {$5$};
    \node[hollow] (6) at  (378:2cm) {$6$};
    \node[hollow] (7) at  (-2.6,-3.6) {$7$};
    \node[hollow] (8) at  (2.6, -3.6) {$8$};
    
    \draw[edge] (1) -- (2); 
    \draw[edge] (1) -- (3); 
    \draw[edge] (1) -- (4); 
    \draw[edge] (1) -- (5); 
    \draw[edge] (1) -- (6);

    \draw[edge] (2) -- (3); 
    \draw[edge] (2) -- (6); 
    \draw[edge] (2) -- (7); 

    \draw[edge] (3) -- (4); 
    \draw[edge] (3) -- (7); 
    \draw[edge] (3) -- (8); 

    \draw[edge] (4) -- (5); 
    \draw[edge] (4) -- (8);

    \draw[edge] (5) -- (6);
    \draw[edge] (5) -- (8);

    \draw[edge] (6) -- (7);
    \draw[edge] (6) -- (8);

    \draw[edge] (7) -- (8);
\end{tikzpicture}
\caption{An $8$-vertex graph $H$ with no $7$-vertex word-representable induced subgraph.}
\label{fig:tau8neq7}
\end{figure}

\begin{corollary}
\label{taun-bound-corollary}
For any $\epsilon > 0$ and sufficiently large $n$, $\tau(n) \le n^{\log_8 6 + \epsilon}$.
\end{corollary}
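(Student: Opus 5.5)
The plan is to interpolate Theorem~\ref{tau8k-6k} to arbitrary $n$, using that $\tau$ is non-decreasing in $n$. First I will check monotonicity: if $n \le m$, take any graph $G$ on $n$ vertices with $\eta(G)=\tau(n)$ and add $m-n$ isolated vertices to get $G'$. Any representable set $S'$ of $G'$ splits into a part inside $V(G)$ and some isolated vertices. The part inside $V(G)$ is representable in $G$. Conversely, a maximum representable set of $G$ together with the isolated vertices induces a disjoint union of word-representable graphs, which is word-representable by Theorem~\ref{unionofwrgiswrg}. Hence $\eta(G')=\eta(G)+(m-n)$, which gives only $\tau(m)\le \tau(n)+(m-n)$ and is the wrong direction. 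So instead I will argue directly that $\tau(n)\le\tau(m)$ for $n\le m$. Let $G'$ be an $m$-vertex graph with $\eta(G')=\tau(m)$, and let $G=G'[A]$ for any $n$-subset $A$. Every representable set of $G$ is representable in $G'$, so $\tau(n)\le\eta(G)\le\eta(G')=\tau(m)$.

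Given $n$, choose $k=\lceil \log_8 n\rceil$, so that $8^{k-1}<n\le 8^k$. Monotonicity together with Theorem~\ref{tau8k-6k} gives
\[
\tau(n)\le \tau(8^k)\le 6^k \le 6\cdot 6^{\log_8 n} = 6\, n^{\log_8 6}.
\]
Since $6 \le n^{\epsilon}$ once $n\ge 6^{1/\epsilon}$, we get $\tau(n)\le n^{\log_8 6+\epsilon}$ for all sufficiently large $n$.

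The only delicate point is the monotonicity step. I would make sure to use the induced-subgraph argument rather than padding with isolated vertices, since padding controls the bound in the wrong direction. The remaining steps are routine arithmetic.
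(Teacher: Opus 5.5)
Your proof is correct and follows the same route as the paper: monotonicity of $\tau$, then $\tau(n)\le\tau(8^k)\le 6^k\le 6\,n^{\log_8 6}\le n^{\log_8 6+\epsilon}$ with $8^{k-1}<n\le 8^k$. The only difference is that you prove the monotonicity of $\tau$ yourself via the induced-subgraph argument, which is valid, whereas the paper simply cites it from prior work.
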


\begin{proof}
Let $k$ be the unique positive integer such that $8^{k-1} < n \le 8^k$.
Then $k - 1 < \log_8 n \le k$, so $k < \log_8 n + 1$.

Since $\tau$ is non-decreasing \cite{multi-word-rep}, we have
$\tau(n) \le \tau(8^k) \le 6^k$.
Hence,
\[
\tau(n) \le 6^k < 6^{\log_8 n + 1} = 6n^{\log_8 6}.
\]

For any $\epsilon > 0$, for sufficiently large $n$ we have $6 \le n^\epsilon$, and thus
\[
\tau(n) \le 6n^{\log_8 6} \le n^{\log_8 6 + \epsilon}.
\]
\end{proof}
Since $\log_8 6 < 1$, Corollary~\ref{taun-bound-corollary} implies that $\tau(n)$ grows sublinearly.
As an interesting application of Theorem~\ref{lex-power-mu-gen-bound}, we obtain the following precise evaluation.

\begin{observation}
\label{obs-mu-H-k}
Let $H$ be the graph depicted in Figure~\ref{fig:tau8neq7}. Then, for every integer $k \ge 1$,
\[
\mu(H^{[k]}) = 2.
\]
\end{observation}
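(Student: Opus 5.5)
The plan is to sandwich $\mu(H^{[k]})$ between $2$ and $2$. The lower bound comes from non-word-representability of $H$. The upper bound comes from Theorem~\ref{lex-power-mu-gen-bound} together with an explicit cover of $H$ by two comparability graphs.

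\emph{Lower bound.} By the choice of $H$ we have $\eta(H)\le 6<8=|V(H)|$, so $H$ itself is not word-representable and $\mu(H)\ge 2$. For $k\ge 2$, fix a vertex in the second factor of $H^{[k]}=H\circ H^{[k-1]}$. As in the proof of Theorem~\ref{gen-thm-multi-repno}, this yields an induced copy of $H$ inside $H^{[k]}$. By Remark~\ref{monotonic-mu-hereditary-remark}, $\mu(H^{[k]})\ge \mu(H)\ge 2$.

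\emph{Upper bound.} The key step is to show $\mathrm{cov}_{\mathrm{comp}}(H)\le 2$. The idea is that a proper $4$-coloring of $H$ splits its edges into two bipartite graphs, and bipartite graphs are comparability graphs (orient every edge from one side to the other; no directed path of length $2$ exists). I will exhibit the $4$-coloring explicitly, using colors $a,b,c,d$:
\[
1\mapsto a,\; 2\mapsto b,\; 3\mapsto c,\; 4\mapsto d,\; 5\mapsto b,\; 6\mapsto c,\; 7\mapsto d,\; 8\mapsto a.
\]
This must be checked against the $18$ edges of Figure~\ref{fig:tau8neq7}. For instance, $7$ sees $2,3,6,8$ with colors $b,c,c,a$, and $8$ sees $3,4,5,6,7$ with colors $c,d,b,c,d$. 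Now set $P=\{a,b\}$ and $Q=\{c,d\}$.
\begin{itemize}
\item Let $A$ consist of all edges between a $P$-colored vertex and a $Q$-colored vertex. Then $A$ is bipartite.
\item Let $B$ consist of the remaining edges, namely those inside the $P$-colored vertices or inside the $Q$-colored vertices. Since the coloring is proper, each such edge joins two vertices of different colors within $\{a,b\}$ or within $\{c,d\}$. So $B$ is a disjoint union of two bipartite graphs, hence bipartite.
\end{itemize}
Both $A$ and $B$ are comparability graphs covering $E(H)$, so $\mathrm{cov}_{\mathrm{comp}}(H)\le 2$.

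\emph{Conclusion.} For $k\ge 2$, Theorem~\ref{lex-power-mu-gen-bound} gives $\mu(H^{[k]})\le 2$. For $k=1$, the same two comparability graphs are word-representable and cover $H$, so $\mu(H)\le 2$. Combined with the lower bound, $\mu(H^{[k]})=2$ for all $k\ge 1$.

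The only real obstacle is finding a valid $4$-coloring. A $3$-coloring is impossible, because vertex $1$ together with the $5$-cycle $2,3,4,5,6$ forms a $W_5$. A first attempt coloring the rim as $b,c,b,c,d$ forces vertex $8$ to need a fifth color. Choosing the rim coloring $b,c,d,b,c$ instead lets $7$ take $d$ and $8$ take $a$, which is the coloring above. Once it is in hand, the remaining checks are routine edge-by-edge verifications.
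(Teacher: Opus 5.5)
Your proposal is correct and follows the paper's proof. The lower bound comes from $H$ being non-word-representable plus heredity. The upper bound comes from $\mathrm{cov}_{\mathrm{comp}}(H)\le 2$ via a proper $4$-coloring, then Theorem~\ref{lex-power-mu-gen-bound} for $k\ge 2$ and a direct cover for $k=1$. The only cosmetic differences are two. First, you group the color classes into two bipartite graphs by hand instead of citing the $\lceil\log_2\chi\rceil$ bipartite-cover formula, using a different but valid coloring with classes $\{1,8\},\{2,5\},\{3,6\},\{4,7\}$. Second, you derive non-word-representability of $H$ from $\eta(H)\le 6$ rather than citing it.
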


\begin{proof}
We first determine $\mathrm{cov}_{\mathrm{comp}}(H)$. Since $H$ is non-word-representable~\cite{multi-word-rep}, $H$ is a non-comparability graph. Hence, $\mathrm{cov}_{\mathrm{comp}}(H) \ge 2$.

Figure~\ref{fig:4coloringH} demonstrates a valid $4$-coloring of the graph $H$, where the four independent sets formed in the partition are $\{1,8\}$, $\{2, 4\}$, $\{3, 6\}$, and $\{5, 7\}$. The chromatic number of $H$ is thus at most $4$. Furthermore, every $3$-colorable graph is word-representable~\cite{Halldorsson2011}. Since $H$ is known to be non-word-representable~\cite{multi-word-rep}, it cannot be $3$-colorable, implying that $\chi(H) \ge 4$. Consequently, we have exactly $\chi(H) = 4$. 

Every bipartite graph is a comparability graph~\cite{golumbic2004algorithmic}; thus, any covering of $H$ by bipartite graphs is inherently a covering by comparability graphs, yielding the inequality $\mathrm{cov}_{\mathrm{comp}}(H) \le \mathrm{cov}_{\mathrm{bip}}(H)$, where $\mathrm{cov}_{\mathrm{bip}}(H)$ denotes the cover number of $H$ by bipartite graphs. Further, $\mathrm{cov}_{\mathrm{bip}}(H) = \lceil \log_2 \chi(H) \rceil$~\cite{harary1977biparticity}. Substituting $\chi(H) = 4$ yields $\mathrm{cov}_{\mathrm{bip}}(H) = \lceil \log_2 4 \rceil = 2$, which implies $\mathrm{cov}_{\mathrm{comp}}(H) \le 2$. Combining our evaluations yields the exact value $\mathrm{cov}_{\mathrm{comp}}(H) = 2$.

Now, we determine the multi-word-representation number $\mu(H^{[k]})$. Since $H$ is an induced subgraph of its lexicographic power $H^{[k]}$, Remark~\ref{monotonic-mu-hereditary-remark} implies that $\mu(H^{[k]}) \ge \mu(H)$. Because $H$ is non-word-representable, $\mu(H) \ge 2$. We therefore obtain $\mu(H^{[k]}) \ge 2$ for all $k \ge 1$. 

For the upper bound, we branch our analysis into two cases based on the exponent $k$. If $k = 1$, then $H^{[1]} = H$; since every comparability graph is word-representable, any covering by comparability graphs constitutes a valid covering by word-representable graphs, yielding $\mu(H) \le \mathrm{cov}_{\mathrm{comp}}(H) = 2$. If $k \ge 2$, Theorem~\ref{lex-power-mu-gen-bound} directly provides the upper bound $\mu(H^{[k]}) \le \mathrm{cov}_{\mathrm{comp}}(H) = 2$. In both instances, the upper bound matches the lower bound exactly, completing the proof that $\mu(H^{[k]}) = 2$ for all $k \ge 1$.
\end{proof}

\begin{remark}
\label{rem:parameter-comparison}
Lemma~\ref{etaHk} and Observation~\ref{obs-mu-H-k} indicate that the parameters $\mu(G)$ and $\eta(G)$ capture different aspects of the structure of a graph. In particular, a graph family may have a bounded multi-word-representation number while the order of its largest word-representable induced subgraph is asymptotically negligible relative to the order of the whole graph. Indeed, for every integer $k \ge 1$, we have $\mu(H^{[k]})=2$ and $\eta(H^{[k]}) \le 6^k$, whereas $|V(H^{[k]})|=8^k$. 
\end{remark}

\begin{figure}[h]
\centering
\begin{tikzpicture}[scale=0.75, transform shape]

\tikzset{
    hollow/.style = {circle, draw=black, minimum size=1.5em},
    edge/.style = {draw=black}
}


\node[hollow, fill=red!60] (1) at  (0, 0) {$1$};
\node[hollow, fill=blue!60] (2) at  (90:2cm) {$2$};
\node[hollow, fill=green!60] (3) at  (162:2cm) {$3$};
\node[hollow, fill=blue!60] (4) at  (234:2cm) {$4$};
\node[hollow, fill=yellow!70] (5) at  (306:2cm) {$5$};
\node[hollow, fill=green!60] (6) at  (378:2cm) {$6$};
\node[hollow, fill=yellow!70] (7) at  (-2.6,-3.6) {$7$};
\node[hollow, fill=red!60] (8) at  (2.6,-3.6) {$8$};


\draw[edge] (1) -- (2); 
\draw[edge] (1) -- (3); 
\draw[edge] (1) -- (4); 
\draw[edge] (1) -- (5); 
\draw[edge] (1) -- (6);

\draw[edge] (2) -- (3); 
\draw[edge] (2) -- (6); 
\draw[edge] (2) -- (7); 

\draw[edge] (3) -- (4); 
\draw[edge] (3) -- (7); 
\draw[edge] (3) -- (8); 

\draw[edge] (4) -- (5); 
\draw[edge] (4) -- (8);

\draw[edge] (5) -- (6);
\draw[edge] (5) -- (8);

\draw[edge] (6) -- (7);
\draw[edge] (6) -- (8);

\draw[edge] (7) -- (8);

\end{tikzpicture}
\caption{A proper 4-coloring of the graph $H$.}
\label{fig:4coloringH}
\end{figure}

We now apply the extremal properties of the family $H^{[k]}$ to resolve the Word-representable Bipartition (WB) problem introduced in~\cite{multi-word-rep}. Observation 6 in \cite{multi-word-rep} states that if the Word-representable Bipartition (WB) problem is solved affirmatively for $n$, then any graph $G$ on at most $n + \tau(n)$ vertices is $2$-multi-word-representable. The proof technique utilized therein offers a strategy for establishing $2$-multi-word-representability for such a graph $G$: isolate a word-representable induced subgraph $A$ of order $\tau(n)$ and verify whether the remaining induced subgraph $G \setminus A$ admits a word-representable bipartition (that is, its vertex set can be partitioned into two subsets, each inducing a word-representable subgraph). 

While this strategy provides a sufficient condition for establishing $2$-multi-word-representability, it is not universally applicable. In fact, there exist infinite families of graphs for which this technique inherently fails. Specifically, for the family $H^{[k]}$ where $k \ge 4$, although $H^{[k]}$ is $2$-multi-word-representable (Observation~\ref{obs-mu-H-k}), the following corollary demonstrates that for any word-representable induced subgraph $A$ (such as one of order $\tau(n)$), the induced subgraph $H^{[k]} \setminus A$ cannot be vertex-partitioned into two word-representable induced subgraphs.

\begin{corollary}
\label{cor:wb_limitation}
For any integer $k \ge 4$, let $G = H^{[k]}$ and let $A$ be any word-representable induced subgraph of $G$. The induced subgraph $G \setminus A$ cannot be vertex-partitioned into two word-representable induced subgraphs.
\end{corollary}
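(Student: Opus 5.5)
A pure counting argument based on Lemma~\ref{etaHk} suffices. Let $n = |V(G)| = 8^k$. Suppose, for contradiction, that $V(G)\setminus V(A)$ can be partitioned into sets $V_1$ and $V_2$ with $G[V_1]$ and $G[V_2]$ word-representable. Then $V(A)$, $V_1$ and $V_2$ are three representable sets of $G = H^{[k]}$. By Lemma~\ref{etaHk}, each has cardinality at most $\eta(H^{[k]}) \le 6^k$. Since these three sets partition $V(G)$, we get
\[
8^k = |V(A)| + |V_1| + |V_2| \le 3\cdot 6^k .
\]

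The remaining step is to show that this inequality fails for every $k \ge 4$, that is, $(4/3)^k > 3$. For $k=4$ we have $(4/3)^4 = 256/81 \approx 3.16 > 3$, equivalently $8^4 = 4096 > 3888 = 3\cdot 6^4$. Since $(4/3)^k$ is increasing in $k$, the strict inequality $8^k > 3\cdot 6^k$ holds for all $k\ge 4$. This contradicts the displayed bound, so no such partition exists.

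A few degenerate cases need a brief check, but none causes trouble. If $A$ is empty, the argument above already rules out a partition into two parts, because $2\cdot 6^k < 8^k$. If one of $V_1$ or $V_2$ is empty, the bound only becomes stronger. The proof uses heredity of word-representability only through Lemma~\ref{etaHk}, which already bounds every representable set. I do not expect any step to be a real obstacle; the only care needed is the arithmetic threshold $k \ge 4$. That threshold is sharp for this counting method, since at $k=3$ we have $512 < 648 = 3\cdot 6^3$.
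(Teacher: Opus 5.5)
Your proof is correct and is essentially the paper's argument: both rest only on Lemma~\ref{etaHk} and reduce to the inequality $8^k \le 3\cdot 6^k$, which fails for all $k\ge 4$. The paper reaches that inequality by applying pigeonhole to the larger part of the bipartition of $G\setminus A$; you instead sum the three bounds $|V(A)|,|V_1|,|V_2|\le 6^k$ directly, which is a slightly cleaner route to the same inequality.
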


\begin{proof}
Let $k \ge 4$, and let $S \subseteq V(G)$ be any set of vertices such that the induced subgraph $G[S]$ is word-representable. By Lemma~\ref{etaHk}, the order of any word-representable induced subgraph in $G$ is at most $6^k$. Therefore, $|S| \le 6^k$.

Consider the induced subgraph $G' = G \setminus S$. The order of $G'$ satisfies:
\[
|V(G')| = |V(G)| - |S| \ge 8^k - 6^k.
\]

Assume, for the sake of contradiction, that $G'$ admits a word-representable bipartition. That is, $V(G')$ can be partitioned into two subsets, $V_1$ and $V_2$, such that both $G'[V_1]$ and $G'[V_2]$ are word-representable. By the Pigeonhole Principle, at least one of these subsets, say $V_1$, satisfies:
\[
|V_1| \ge \frac{|V(G')|}{2} \ge \frac{8^k - 6^k}{2}.
\]

Since $V_1 \subseteq V(G)$ and the induced subgraph $G[V_1]$ is word-representable, its order is bounded by the maximum order of any word-representable induced subgraph in $G$, which is $6^k$ by Lemma~\ref{etaHk}. Combining the lower bound for $|V_1|$ with this upper bound yields:
\[
\frac{8^k - 6^k}{2} \le 6^k \implies 8^k - 6^k \le 2 \cdot 6^k \implies 8^k \le 3 \cdot 6^k.
\]
Dividing by $6^k$, we obtain:
\[
\left(\frac{4}{3}\right)^k \le 3.
\]
For $k = 4$, we have $(4/3)^4 = 256/81 \approx 3.16 > 3$. Since $f(k) = (4/3)^k$ is a strictly increasing function, the inequality $(4/3)^k \le 3$ is false for all $k \ge 4$. This contradiction completes the proof.
\end{proof}

\begin{theorem}
    For every integer $n \geq 2593$, there exists a graph of order $n$ that does not admit a word-representable bipartition. Consequently, the Word-representable Bipartition (WB) problem yields a negative answer for all $n \geq 2593$.
\end{theorem}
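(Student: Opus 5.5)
The plan is to use the family $H^{[4]}$ together with Lemma~\ref{etaHk}, in the same way as in Corollary~\ref{cor:wb_limitation}, and to handle arbitrary orders $n \ge 2593$ through the heredity of word-representability. The threshold comes from $6^4 = 1296$: we have $2 \cdot 6^4 + 1 = 2593 \le 8^4 = 4096$.

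\textbf{Step 1: a base graph on exactly $2593$ vertices.} Choose any set $T \subseteq V(H^{[4]})$ with $|T| = 2593$, which is possible since $|V(H^{[4]})| = 4096$, and let $G_0 = H^{[4]}[T]$. Every representable set of $G_0$ is also a representable set of $H^{[4]}$, because $G_0$ is an induced subgraph. By Lemma~\ref{etaHk}, it follows that $\eta(G_0) \le \eta(H^{[4]}) \le 6^4 = 1296$.

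\textbf{Step 2: $G_0$ has no word-representable bipartition.} Suppose $V(G_0) = V_1 \cup V_2$ with both $G_0[V_1]$ and $G_0[V_2]$ word-representable. By pigeonhole, one part, say $V_1$, satisfies $|V_1| \ge \lceil 2593/2 \rceil = 1297 > 1296$. This contradicts $\eta(G_0) \le 1296$.

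\textbf{Step 3: extend to all $n \ge 2593$.} Let $G_n$ be the disjoint union of $G_0$ with $n - 2593$ isolated vertices, so $G_n$ has order $n$. Suppose $V(G_n) = V_1 \cup V_2$ were a word-representable bipartition. Intersecting each part with $V(G_0)$ gives a partition of $V(G_0)$, and each $G_0[V_i \cap V(G_0)]$ is an induced subgraph of the word-representable graph $G_n[V_i]$. Since word-representability is hereditary, both pieces are word-representable. This yields a word-representable bipartition of $G_0$, contradicting Step 2.

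If one wanted to avoid isolated vertices, an alternative would be to take an induced subgraph of $H^{[k]}$ for a suitable $k$, but the disjoint-union padding makes that unnecessary. Step 3 settles the WB problem negatively for every $n \ge 2593$. No step is a real obstacle; the only point needing care is that the bound $\eta \le 6^4$ transfers to induced subgraphs, which is immediate because representable sets of an induced subgraph are representable in the whole graph.
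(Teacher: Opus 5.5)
Your proposal is correct and follows essentially the same argument as the paper: it bounds $\eta(H^{[4]}) \le 6^4 = 1296$ via Lemma~\ref{etaHk}, applies pigeonhole to any induced subgraph of order at least $2593$, and extends to larger orders using the heredity of word-representability. The only difference is cosmetic. You pad a single $2593$-vertex induced subgraph with isolated vertices for every $n$, whereas the paper takes order-$n$ induced subgraphs of $H^{[4]}$ for $2593 \le n \le 4096$ and embeds only for $n > 4096$; your explicit disjoint-union padding makes that embedding step concrete.
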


\begin{proof}
Let $G = H^{[4]}$, where $H$ is the graph depicted in Figure \ref{fig:tau8neq7}. We show that any induced subgraph of $G$ of order at least $2593$ cannot be vertex-partitioned into two word-representable induced subgraphs.

By Theorem~\ref{tau8k-6k} and Lemma~\ref{etaHk}, the graph $G = H^{[4]}$ has order $|V(G)| = 8^4 = 4096$, and the maximum order of any word-representable induced subgraph in $G$ is at most $6^4 = 1296$.

Let $G'$ be an arbitrary induced subgraph of $G$ with order $|V(G')| \ge 2 \cdot 6^4 + 1 = 2593$. Assume, for the sake of contradiction, that $G'$ admits a word-representable bipartition. That is, $V(G')$ can be partitioned into two subsets, $V_1$ and $V_2$, such that both induced subgraphs $G'[V_1]$ and $G'[V_2]$ are word-representable. 

By the Pigeonhole Principle, at least one of these subsets, say $V_1$, must satisfy:
\[
|V_1| \ge \left\lceil \frac{|V(G')|}{2} \right\rceil \ge \left\lceil \frac{2593}{2} \right\rceil = 1297.
\]

Because $V_1 \subseteq V(G')$ and $G'$ is an induced subgraph of $G$, it follows that $G'[V_1]$ is identical to $G[V_1]$. Since $G[V_1]$ is word-representable, its order is bounded by the maximum order of any word-representable induced subgraph in $G$. Thus, by Lemma~\ref{etaHk}, we must have $|V_1| \le 1296$. 

Combining these bounds yields $1297 \le |V_1| \le 1296$, a clear contradiction. Thus, $G'$ cannot admit a word-representable bipartition.

The WB problem asks whether \textit{every} $n$-vertex graph admits a word-representable bipartition. Because $G$ has $4096$ vertices, we can construct a counterexample for any integer $n \in [2593, 4096]$ by selecting an arbitrary induced subgraph of $G$ of order $n$.

To extend this result for all $n > 4096$, note that if a graph $X$ lacks a word-representable bipartition, then any graph $G'$ that contains $X$ as an induced subgraph also lacks such a bipartition. This follows from the hereditary property of word-representable graphs: if $G'$ admitted a word-representable bipartition, restricting this partition to the vertex set of $X$ would necessarily yield a valid word-representable bipartition for $X$, which is a contradiction. Consequently, the existence of a counterexample of order $n$ implies the existence of a counterexample for all $m > n$ by simply embedding the counterexample as an induced subgraph within a graph of order $m$. We therefore conclude that for all integers $n \ge 2593$, there exists a graph of order $n$ lacking a word-representable bipartition, resolving the WB problem negatively for this range.
\end{proof}

\subsection{Lower bounds for general and perfect graphs}

Having established a sublinear upper bound for $\tau(n)$, we now turn our attention to general lower bounds. A classical Ramsey-theoretic argument implies that every graph on $n$ vertices contains either a clique or an independent set of order at least $c \log n$ for some absolute constant $c > 0$ (see, e.g., \cite{diestel}). Since both cliques and independent sets are word-representable (Example \ref{example-word-rep-graph}), it follows that every graph on $n$ vertices contains a word-representable induced subgraph of order at least $c \log n$. Consequently,
\[
\tau(n) \ge c \log n.
\]

We now examine the behavior of this parameter restricted to the class of perfect graphs. Let $\tau_{\mathrm{perf}}(n)$ denote the minimum value of $\eta(G)$ taken over all perfect graphs of order $n$:
\[
\tau_{\mathrm{perf}}(n) = \min\{\eta(G) : G \text{ is a perfect graph on } n ~\text{vertices}\}.
\]

A graph $G$ is \emph{perfect} if $\chi(H) = \omega(H)$ for every induced subgraph $H \subseteq G$. Let $G$ be a perfect graph of order $n$. Since $\chi(G) = \omega(G)$, any proper $\chi(G)$-coloring of $G$ induces a partition of $V(G)$ into $\chi(G)$ independent sets, at least one of which must have cardinality at least $\lceil n/\chi(G) \rceil$. Consequently, $\alpha(G) \ge n/\chi(G) = n/\omega(G)$, which implies
\[
\alpha(G)\omega(G) \ge n.
\]
Utilizing the inequality $\max\{a, b\} \ge \sqrt{ab}$ for positive reals $a, b$, we obtain
\[
\max\{\alpha(G), \omega(G)\} \ge \sqrt{\alpha(G)\omega(G)} \ge n^{1/2}.
\]
Thus, every perfect graph of order $n$ contains either a clique or an independent set of order at least $n^{1/2}$. Since both cliques and independent sets are word-representable, it follows that
\[
\tau_{\mathrm{perf}}(n) \ge n^{1/2}.
\]

\section{Corona product of graphs and multi-word-representation number}
\label{sec:corona}

In this section, we investigate the multi-word-representation number of the corona product $G_1 \odot G_2$ and establish its relationship with $\mu(G_1)$ and $\mu(G_2)$.

\begin{theorem}
\label{corona-gen-thm}
Let $G_1$ and $G_2$ be graphs, and let $H = G_1 \odot G_2$. Then
\[
\max\{\mu(G_1), \mu(G_2)\} \le \mu(H) \le \max\{\mu(G_1), \mu(G_2)\} + 1.
\]
Furthermore, if either
\begin{enumerate}
    \item $\mu(G_1) > \mu(G_2)$, or
    \item $G_2$ admits a covering by $\mu(G_2)$ word-representable graphs in which at least one graph is a comparability graph,
\end{enumerate}
then $
\mu(H)=\max\{\mu(G_1),\mu(G_2)\}.
$
\end{theorem}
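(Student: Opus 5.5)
The lower bound follows from heredity. $G_1$ is an induced subgraph of $H$, namely the copy of $G_1$ itself. Each $G_2^v$ is an induced subgraph isomorphic to $G_2$. So Remark~\ref{monotonic-mu-hereditary-remark} gives $\mu(H)\ge\max\{\mu(G_1),\mu(G_2)\}$.

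For the upper bounds, set $k_1=\mu(G_1)$, $k_2=\mu(G_2)$ and $k=\max\{k_1,k_2\}$. Take spanning word-representable covers $A_1,\dots,A_{k}$ of $G_1$ and $B_1,\dots,B_{k}$ of $G_2$, padding with edgeless graphs as in the proof of Theorem~\ref{thm:cartesian-multi-rep}.

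Partition the edges of $H$ into three classes:
\begin{itemize}
\item edges of $G_1$;
\item internal edges of the copies $G_2^v$;
\item pendant-join edges $vu$ with $u\in V(G_2^v)$.
\end{itemize}

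For $i\le k$, let $D_i$ be the spanning subgraph with edge set $E(A_i)\cup\bigcup_v E(B_i^v)$, where $B_i^v$ denotes the copy of $B_i$ inside $G_2^v$. Then $D_i$ is a disjoint union of $A_i$ and $|V(G_1)|$ copies of $B_i$, all vertex-disjoint, so it is word-representable by Theorem~\ref{unionofwrgiswrg}.

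The join edges form a spanning subgraph $S$ that is a disjoint union of stars centred at the vertices of $G_1$. Hence $S$ is a forest, so a comparability graph. This gives $k+1$ word-representable graphs covering $H$, which proves the general upper bound.

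To save the extra graph, absorb $S$ into one $D_i$, choosing the index according to the hypothesis.

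\textbf{Case (1): $k_1>k_2$.} Pick an index $i>k_2$, so that $B_i$ is edgeless. Then $D_i\cup S$ consists of $A_i$ together with pendant leaves: each $v\in V(G_1)$ receives $|V(G_2)|$ pendant vertices. Adding a pendant vertex (a vertex of degree one) preserves word-representability. In a word for the graph, replace one occurrence of $v$ by $vuv$, or argue via semi-transitive orientations, orienting $v\to u$ so that $u$ is a sink with in-degree one and no shortcut can use it. Alternatively, this is exactly the corona $A_i\odot\overline{K_m}$, which is word-representable by Theorem~\ref{corona-prod-chara-word}, since $\overline{K_m}$ is a comparability graph.

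\textbf{Case (2).} Choose the cover of $G_2$ so that $B_1$ is a comparability graph, and pair $B_1$ with $A_1$. The merged graph $D_1\cup S$ is then exactly $A_1\odot B_1$: it contains $A_1$, a copy of $B_1$ at each $v$, and $v$ joined to all of that copy. By Theorem~\ref{corona-prod-chara-word} it is word-representable, because $A_1$ is word-representable and $B_1$ is a comparability graph. This holds even when $A_1$ is a padded edgeless graph, since an edgeless graph is word-representable. The remaining $D_2,\dots,D_k$ are unchanged, so $\mu(H)\le k$.

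Case (1) is in fact a special instance of the same argument, because an edgeless $B_i$ is a comparability graph; so both cases reduce to invoking Theorem~\ref{corona-prod-chara-word}. The only point I expect to need care is this identification. One must check that the union $D_1\cup S$ is precisely the corona product $A_1\odot B_1$, with no stray edges between different copies and with the vertex sets matching. Once that is confirmed, no separate pendant-vertex lemma is needed.
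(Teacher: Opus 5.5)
Your proof is correct and takes essentially the same route as the paper. It uses heredity for the lower bound, covers the edges by the disjoint unions $A_i\cup\bigcup_v B_i^v$ plus the join edges, and in both special cases absorbs the join edges into a corona $A_i\odot B_i$ with $B_i$ a comparability graph (edgeless in case (1)), citing Theorem~\ref{corona-prod-chara-word}. The only difference is bookkeeping: the paper attaches the join edges to $A_1$ from the start and, in case (1), discards its last graph because it is empty.

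One of your side remarks is false. Replacing one occurrence of $v$ by $vuv$ does not add a pendant vertex: for $w=vx$ it gives $vuvx$, in which $u$ alternates with $x$ and $v$ no longer alternates with $x$. You do not actually rely on this, since the semi-transitive orientation argument and the corona theorem each suffice.
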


\begin{proof}
By the definition of the corona product $H = G_1 \odot G_2$, the graph $G_1$ is an induced subgraph of $H$. By Remark~\ref{monotonic-mu-hereditary-remark}, it follows that $\mu(G_1) \leq \mu(H)$. Similarly, for any fixed vertex $v \in V(G_1)$, the copy of $G_2$ attached to $v$, denoted by $G_2^v$, forms an induced subgraph of $H$ that is isomorphic to $G_2$. Applying Remark~\ref{monotonic-mu-hereditary-remark} again yields $\mu(G_2) \leq \mu(H)$. Consequently, we obtain the lower bound:
\[
\mu(H) \geq \max\{\mu(G_1), \mu(G_2)\}.
\]

Let $\mu(G_{1}) = k_{1}$ and $\mu(G_{2}) = k_{2}$, and let $k = \max\{k_{1}, k_{2}\}$. By definition, $G_{1}$ can be expressed as the union of $k_{1}$ word-representable subgraphs $A_{1}, A_{2}, \ldots, A_{k_{1}}$, and $G_{2}$ can be expressed as the union of $k_{2}$ word-representable subgraphs $B_{1}, B_{2}, \ldots, B_{k_{2}}$. Without loss of generality, we assume that $V(A_{i}) = V(G_{1})$ for all $1 \leq i \leq k_{1}$ and $V(B_{j}) = V(G_{2})$ for all $1 \leq j \leq k_{2}$. For indices satisfying $k_{1} < i \leq k$ and $k_{2} < j \leq k$ (if any), we define $A_{i} = (V(G_{1}), \emptyset)$ and $B_{j} = (V(G_{2}), \emptyset)$, each of which is an empty spanning subgraph and is trivially word-representable. Thus, we can write $G_{1} = \bigcup_{i=1}^{k} A_{i}$ and $G_{2} = \bigcup_{j=1}^{k} B_{j}$, where each $A_{i}$ and $B_{j}$ is a spanning word-representable subgraph of $G_{1}$ and $G_{2}$, respectively.

By the definition of the corona product $H = G_1 \odot G_2$, for each vertex $v \in V(G_1)$, there is a copy of $G_2$ attached to $v$, denoted by $G_2^v$. Correspondingly, for each $j \in \{1, \ldots, k\}$, let $B_j^v$ denote the subgraph of $G_2^v$ isomorphic to $B_j$. The vertex set of $H$ is given by the disjoint union
\[
V(H) = V(G_1) \cup \bigcup_{v \in V(G_1)} V(G_2^v).
\]

We now show that $\mu(H) \leq k + 1$ by constructing a collection of $k+1$ spanning subgraphs of $H$, denoted by $A_1'$ and $\{D_i\}_{i=2}^{k+1}$, defined by their edge sets as follows:
\begin{align*}
E(A_1') &= E(A_1) \cup \{vu : v \in V(G_1), \, u \in V(G_2^v)\}, \\
E(D_i) &= E(A_i) \cup \bigcup_{v \in V(G_1)} E(B_{i-1}^v), \quad \text{for } 2 \leq i \leq k, \\
E(D_{k+1}) &= \bigcup_{v \in V(G_1)} E(B_k^v).
\end{align*}

\noindent We now verify the word-representability of each spanning subgraph in this collection:
\begin{enumerate}
      \item \textbf{Word-representability of $A_1'$:} Observe that the subgraph $A_1'$ is precisely the corona product $A_1 \odot (V(G_2), \emptyset)$, where $(V(G_2), \emptyset)$ denotes the empty graph on the vertex set of $G_2$. By construction, $A_1$ is a word-representable graph. Since every empty graph is a comparability graph, it follows directly from Theorem~\ref{corona-prod-chara-word} that $A_1'$ is word-representable.

    \item \textbf{Word-representability of $D_i$ ($2 \leq i \leq k$):} For each $i$, the graph $D_i$ is the union of the spanning subgraph $A_i$ and the collection of subgraphs $\{B_{i-1}^v\}_{v \in V(G_1)}$. Since $A_i$ and each graph in the collection $\{B_{i-1}^v\}_{v \in V(G_1)}$ are word-representable and have pairwise disjoint vertex sets, it follows from Theorem~\ref{unionofwrgiswrg} that $D_i$ is word-representable for all $2 \leq i \leq k$.

       \item \textbf{Word-representability of $D_{k+1}$:} The graph $D_{k+1}$ is the union of the collection of subgraphs $\{B_k^v\}_{v \in V(G_1)}$. Since these subgraphs are word-representable and have pairwise disjoint vertex sets, Theorem~\ref{unionofwrgiswrg} guarantees that $D_{k+1}$ is word-representable.

\end{enumerate}

Every edge of $H$ lies in at least one of the graphs $A_1', D_2, \ldots, D_{k+1}$. Consequently,
\[
\mu(H) \leq k + 1 = \max\{\mu(G_1), \mu(G_2)\} + 1.
\]

Now suppose that $k_1 > k_2$. Then $k=\max\{k_1,k_2\}=k_1$, and hence $B_k=(V(G_2),\emptyset)$. Consequently,
$
D_{k+1}
=
\bigcup_{v\in V(G_1)} B_k^v
$
is an empty graph, i.e., $E(D_{k+1})=\emptyset$. Hence, $D_{k+1}$ may be omitted from the covering. It follows that the graphs $
A_1', D_2, \ldots, D_k$
already form a covering of \(H\) by \(k\) word-representable graphs. Hence,
$
\mu(H)\le k=\max\{\mu(G_1),\mu(G_2)\}.
$

Combined with the lower bound
$
\mu(H)\ge \max\{\mu(G_1),\mu(G_2)\},
$
established earlier, we obtain
\[
\mu(H)=\max\{\mu(G_1),\mu(G_2)\}.
\]

Now suppose that $G_2$ admits a covering by $\mu(G_2)=k_2$ word-representable graphs in which at least one graph is a comparability graph. Without loss of generality, assume that in the representation $G_2=\bigcup_{j=1}^{k_2} B_j$, the graph $B_1$ is a comparability graph.

We now consider the previously defined families $\{A_i\}_{i=1}^{k}$ and $\{B_j\}_{j=1}^{k}$, where the extension by empty spanning subgraphs (if necessary) is as in the general upper bound construction. Using these, we construct a collection of $k$ spanning subgraphs $\{D_i'\}_{i=1}^{k}$ of $H$ by defining their edge sets as follows:
\begin{align*}
E(D_1')
&=
E(A_1)
\cup
\{vu : v\in V(G_1),\, u\in V(G_2^v)\}
\cup
\bigcup_{v\in V(G_1)} E(B_1^v),\\
E(D_i')
&=
E(A_i)
\cup
\bigcup_{v\in V(G_1)} E(B_i^v),
\qquad 2\le i\le k.
\end{align*}

We now verify that each graph $D_i'$ is word-representable.

\begin{enumerate}
\item \textbf{Word-representability of $D_1'$.}  
Observe that $D_1'$ is precisely the corona product $A_1\odot B_1$. Since $A_1$ is word-representable and $B_1$ is a comparability graph, Theorem~\ref{corona-prod-chara-word} implies that $D_1'$ is word-representable.

\item \textbf{Word-representability of $D_i'$ for $2\le i\le k$.}  
For each $i\ge 2$, the graph $D_i'$ is the union of the spanning subgraph $A_i$ and the collection of subgraphs $\{B_i^v\}_{v\in V(G_1)}$. Since $A_i$ and each $B_i^v$ are word-representable and their vertex sets are pairwise disjoint, it follows directly from Theorem~\ref{unionofwrgiswrg} that $D_i'$ is word-representable.
\end{enumerate}

The union of the edge sets of $D_1',D_2',\ldots,D_k'$ is precisely $E(H)$. Hence, $\{D_i'\}_{i=1}^{k}$ is a covering of $H$ by $k$ word-representable graphs, and therefore $
\mu(H)\le k=\max\{\mu(G_1),\mu(G_2)\}.
$ Together with the lower bound
$
\mu(H)\ge \max\{\mu(G_1),\mu(G_2)\},
$
established earlier, we conclude that
\[
\mu(H)=\max\{\mu(G_1),\mu(G_2)\}.
\]
\end{proof}

\begin{remark}
The case $\mu(G_1)=\mu(G_2)=1$ where $G_2$ is not a comparability graph demonstrates that the upper bound in Theorem~\ref{corona-gen-thm} is attainable. Indeed, under these conditions, Theorem~\ref{corona-prod-chara-word} implies that $H$ is not word-representable, so $\mu(H) \geq 2$. Since Theorem~\ref{corona-gen-thm} establishes $\mu(H) \leq 2$, it follows that $\mu(H) = \max\{\mu(G_1),\mu(G_2)\}+1 = 2$.
\end{remark}

The structural characterization established in Theorem~\ref{corona-gen-thm} leaves an unresolved regime specifically when $\mu(G_1) \le \mu(G_2)$ (with $\mu(G_2) \ge 2$) and $G_2$ does not satisfy the specified covering condition. Motivated by the fact that satisfying this condition ensures the upper bound tightens to exactly $\max\{\mu(G_1), \mu(G_2)\}$, we pose the following question.

\begin{question}\label{q:comparability_covering}
Let $G$ be a graph with $\mu(G) \ge 2$. Does there exist a covering of $G$ by $\mu(G)$ word-representable subgraphs such that at least one subgraph in the covering is a comparability graph?
\end{question}

An affirmative answer to Question~\ref{q:comparability_covering} would imply that the hypothesis of Theorem~\ref{corona-gen-thm} is satisfied for every graph $G_2$ with $\mu(G_2) \ge 2$. Furthermore, if $\mu(G_2) = 1$ and $G_2$ is a comparability graph, the condition is trivially satisfied. Under an affirmative answer, the only scenario where the multi-word-representation number of the corona product does not perfectly collapse to $\max\{\mu(G_1), \mu(G_2)\}$ would be the exact gap identified in our Remark: when $\mu(G_1) \le 1$, $\mu(G_2) = 1$, and $G_2$ is not a comparability graph (yielding $\mu(H) = 2$). Beyond its direct application to the corona product, Question~\ref{q:comparability_covering} is of independent interest in structural graph theory.

\section{Tensor product of graphs and multi-word-representation number}
\label{sec:tensor}

For the tensor product of graphs, the exact characterization of word-representability remains an open problem. In~\cite{choi2019operations}, it was shown that the tensor product of two word-representable graphs is not necessarily word-representable; a concrete counterexample was provided.

It is well known that for any graphs $G_{1}$ and $G_{2}$, 
$\chi(G_{1} \times G_{2}) \le \min \{\chi(G_{1}), \chi(G_{2})\}$ \cite{Hedetniemi1967HomomorphismsOG}.
Moreover, for any graph $G$, Theorem~\ref{mu-log-chi-bound} implies that $\mu(G) \le \log_{3} \chi(G)$. Combining these two facts, we obtain the following upper bound on the multi-word-representation number of the tensor product.

\begin{observation}
\label{tensor-prod-multi-wordrep-bound}
Let $G_{1}$ and $G_{2}$ be graphs. Then
\[
\mu(G_{1} \times G_{2}) \le \log_{3} \bigl( \min \{\chi(G_{1}), \chi(G_{2})\} \bigr).
\]
\end{observation}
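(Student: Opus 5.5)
The plan is to chain two facts already recorded in the paper: Hedetniemi's bound on the chromatic number of a tensor product, and the chromatic bound on $\mu$ from Theorem~\ref{mu-log-chi-bound}. No new construction of word-representable covers should be needed. The point is that a proper colouring of either factor lifts to a proper colouring of the product, and $\mu$ is controlled by the number of colours.

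First I will re-derive the inequality $\chi(G_1\times G_2)\le\min\{\chi(G_1),\chi(G_2)\}$ directly, so the argument is self-contained. Take a proper colouring $c$ of $G_1$ with $\chi(G_1)$ colours and colour $(u,v)$ by $c(u)$. Any edge $(u_1,v_1)(u_2,v_2)$ of $G_1\times G_2$ has $u_1u_2\in E(G_1)$, so $c(u_1)\neq c(u_2)$ and the colouring is proper. The same argument applies with the roles of $G_1$ and $G_2$ exchanged.

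Next I will apply Theorem~\ref{mu-log-chi-bound} to $G_1\times G_2$, giving $\mu(G_1\times G_2)\le\lceil\log_3\chi(G_1\times G_2)\rceil$. Since $x\mapsto\lceil\log_3 x\rceil$ is non-decreasing on positive integers, this yields
\[
\mu(G_1\times G_2)\le\bigl\lceil\log_3\bigl(\min\{\chi(G_1),\chi(G_2)\}\bigr)\bigr\rceil .
\]
Concretely, the cover behind Theorem~\ref{mu-log-chi-bound} groups the $\chi$ colour classes into blocks of three colours. The edges between classes whose base-$3$ expansions of the colour index differ in digit $t$ span a $3$-colourable, hence word-representable, subgraph, one for each digit position.

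The main obstacle is not mathematical but interpretive. The displayed bound $\log_3(\min\{\chi(G_1),\chi(G_2)\})$ must be read with a ceiling, and with the convention that $\mu\ge1$. For instance, if $\min\{\chi(G_1),\chi(G_2)\}\in\{1,2\}$, the literal right-hand side is $0$ or $\log_3 2<1$, while $\mu=1$. So I would state the result in the rounded form $\max\{1,\lceil\log_3\min\{\chi(G_1),\chi(G_2)\}\rceil\}$, which is what Theorem~\ref{mu-log-chi-bound} actually delivers. I would then note the immediate corollary: if either factor is $3$-colourable, the product is $3$-colourable and hence word-representable.
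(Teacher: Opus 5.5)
Your proposal is correct and follows the paper's argument. The paper chains $\chi(G_1\times G_2)\le\min\{\chi(G_1),\chi(G_2)\}$ with Theorem~\ref{mu-log-chi-bound}, exactly as you do, and your projection proof of the chromatic bound and your base-$3$ digit description of the cover are both sound. Your rounding caveat is also justified, since the paper silently drops the ceiling: read literally, the displayed bound already fails for $K_2\times K_2\cong 2K_2$, where the right-hand side is $\log_3 2<1=\mu$, so the correct form is $\max\{1,\lceil\log_3\min\{\chi(G_1),\chi(G_2)\}\rceil\}$.
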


\begin{remark}
Regarding Observation~\ref{tensor-prod-multi-wordrep-bound}, this upper bound cannot be improved to $\mu(G_{1} \times G_{2}) \le \min \{\mu(G_{1}), \mu(G_{2})\}$, nor does a bound of the form $\mu(G_{1} \times G_{2}) \le \max \{\mu(G_{1}), \mu(G_{2})\}$ hold. Indeed, the counterexample in~\cite{choi2019operations} provides word-representable graphs $G_{1}$ and $G_{2}$ (where $\mu(G_{1}) = \mu(G_{2}) = 1$) whose tensor product $H$ satisfies $\mu(H) \ge 2$ (as $H$ is non-word-representable).

Conversely, Observation~\ref{tensor-prod-multi-wordrep-bound} immediately yields a notable sufficient condition for word-representability. If at least one of the graphs ($G_{1}$ or $G_{2}$) is $3$-colorable, then $\min \{\chi(G_{1}), \chi(G_{2})\} \le 3$. By our bound, this implies $\mu(G_{1} \times G_{2}) \le \log_{3}(3) = 1$. Since $\mu$ is a positive integer, we have $\mu(G_{1} \times G_{2}) = 1$, meaning the tensor product is guaranteed to be word-representable. This provides a direct, partial answer to Problem~7.2.5 posed by Kitaev in~\cite[Chapter~7]{book}, which concerns the word-representability of tensor products. Specifically, our result demonstrates that as long as one of the graphs is $3$-colorable, the resulting product graph is unconditionally word-representable, thereby completely settling the outcome regardless of the second graph's word-representability status.
\end{remark}

\section{Strong product of graphs and multi-word-representation number}
\label{sec:strong}

We now turn to the strong product. The exact characterization of word-representability remains an open problem for strong products as well. Because the edge set of the strong product $G_1 \boxtimes G_2$ is the union of the edge sets of the Cartesian product $G_1 \square G_2$ and the tensor product $G_1 \times G_2$, we can leverage our previous results. By combining Definition~\ref{def-strong-product}, Theorem~\ref{thm:cartesian-multi-rep}, and Observation~\ref{tensor-prod-multi-wordrep-bound}, we obtain the following bounds on the multi-word-representation number of the strong product.

\begin{observation}
Let $G_{1}$ and $G_{2}$ be graphs. Then
\[
\mu(G_{1} \boxtimes G_{2}) \le \max \{\mu(G_{1}), \mu(G_{2})\} + \log_{3} \left( \min \{\chi(G_{1}), \chi(G_{2})\} \right).
\]
\end{observation}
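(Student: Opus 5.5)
The plan is to split the edge set of $H = G_1 \boxtimes G_2$ as in the Sabidussi identity
\[
E(G_1 \boxtimes G_2) = E(G_1 \square G_2) \cup E(G_1 \times G_2),
\]
and to cover each part separately by word-representable spanning subgraphs of $H$. Since both $G_1 \square G_2$ and $G_1 \times G_2$ have vertex set $V(G_1)\times V(G_2) = V(H)$, each is a spanning subgraph of $H$. Hence any covering of $G_1 \square G_2$ by word-representable graphs, together with any covering of $G_1 \times G_2$ by word-representable graphs, gives a covering of $H$. This yields the subadditivity
\[
\mu(H) \le \mu(G_1 \square G_2) + \mu(G_1 \times G_2).
\]
This step only needs the definition of covering and Remark~\ref{spanning-cover-remark}, which lets us treat all pieces as spanning subgraphs on $V(H)$.

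Next I would bound each term by earlier results. Theorem~\ref{thm:cartesian-multi-rep} gives $\mu(G_1 \square G_2) = \max\{\mu(G_1),\mu(G_2)\}$ exactly. Observation~\ref{tensor-prod-multi-wordrep-bound} gives $\mu(G_1 \times G_2) \le \log_3(\min\{\chi(G_1),\chi(G_2)\})$. That observation rests on Hedetniemi's inequality $\chi(G_1\times G_2)\le\min\{\chi(G_1),\chi(G_2)\}$ together with Theorem~\ref{mu-log-chi-bound}. Substituting both bounds into the subadditivity inequality gives the claimed bound.

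The only delicate point is the bookkeeping around the logarithm. Theorem~\ref{mu-log-chi-bound} is stated with a ceiling, $\lceil \log_3 \chi \rceil$, whereas the Observation is written without one. I would follow the paper's convention in Observation~\ref{tensor-prod-multi-wordrep-bound} and cite it directly. I would also note that the argument honestly yields the version with the ceiling, and that this agrees with the stated inequality whenever $\log_3$ of the minimum chromatic number is an integer. Apart from this, no real obstacle arises: the proof is a direct combination of the edge decomposition and the two earlier bounds.

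If the lower bound $\max\{\mu(G_1),\mu(G_2)\}\le\mu(H)$ stated in the abstract is also wanted, I would prove it as in Theorem~\ref{thm:cartesian-multi-rep}. For fixed $v\in V(G_2)$, the set $V(G_1)\times\{v\}$ induces a copy of $G_1$ in $H$, because the strong product restricted to a fixed second coordinate reduces to condition~2 of the definition. Symmetrically, for fixed $u\in V(G_1)$, the set $\{u\}\times V(G_2)$ induces a copy of $G_2$. Remark~\ref{monotonic-mu-hereditary-remark} then gives both inequalities.
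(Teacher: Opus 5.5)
Your route is the paper's own: it also splits $E(G_1\boxtimes G_2)$ by Sabidussi's identity and adds Theorem~\ref{thm:cartesian-multi-rep} to Observation~\ref{tensor-prod-multi-wordrep-bound}. The genuine gap is the one you half-noticed, the missing ceiling, and you should not ``follow the paper's convention'' there.

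The unrounded tensor bound is not a harmless convention but false, so citing it directly puts a false step into your proof. Already $\mu(G)\le\log_3\chi(G)$ fails for $W_5$, where $\chi=4$ and $\mu=2>\log_3 4$. Observation~\ref{tensor-prod-multi-wordrep-bound} itself fails for $W_5\times K_6$. Take a bijection $\varphi\colon V(W_5)\to V(K_6)$; the vertices $(u,\varphi(u))$ induce a copy of $W_5$. Hence $\mu(W_5\times K_6)\ge 2>\log_3\min\{4,6\}$. What your argument, like the paper's, actually proves is
\[
\mu(G_1\boxtimes G_2)\le \max\{\mu(G_1),\mu(G_2)\}+\bigl\lceil \log_3 \min\{\chi(G_1),\chi(G_2)\}\bigr\rceil ,
\]
and that is the version you should state. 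The unrounded inequality follows from it only when $\min\{\chi(G_1),\chi(G_2)\}$ is a power of $3$. Otherwise this route gives no way to recover it.

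One degenerate case also needs a separate line. If $\min\{\chi(G_1),\chi(G_2)\}=1$, the tensor part is edgeless and has $\mu=1$, not $0$, so your subadditivity step overshoots by one. In that case observe that $G_1\boxtimes G_2=G_1\square G_2$ and apply Theorem~\ref{thm:cartesian-multi-rep} directly.
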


For the corresponding lower bound, observe that for any fixed vertex $v \in V(G_2)$, the subgraph of $G_1 \boxtimes G_2$ induced by $\{(u,v) : u \in V(G_1)\}$ is isomorphic to $G_1$. Similarly, for any fixed vertex $u \in V(G_1)$, the subgraph induced by $\{(u,v) : v \in V(G_2)\}$ is isomorphic to $G_2$. Thus, both $G_1$ and $G_2$ occur as induced subgraphs of $G_1 \boxtimes G_2$. Hence, by Remark~\ref{monotonic-mu-hereditary-remark}, we obtain the following:

\begin{observation}
\label{strong-prod-lower-bound}
Let $G_1$ and $G_2$ be graphs. Then
\[
\mu(G_1 \boxtimes G_2) \ge \max \{\mu(G_1), \mu(G_2)\}.
\]
\end{observation}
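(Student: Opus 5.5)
The statement to prove is the lower bound of Observation~\ref{strong-prod-lower-bound}: $\mu(G_1 \boxtimes G_2) \ge \max\{\mu(G_1),\mu(G_2)\}$. The plan is a hereditary argument. We exhibit copies of $G_1$ and of $G_2$ as induced subgraphs of $G_1 \boxtimes G_2$, and then invoke Remark~\ref{monotonic-mu-hereditary-remark}, which says $\mu$ cannot increase under taking induced subgraphs. This is the same scheme used for the lower bounds in Theorems~\ref{thm:cartesian-multi-rep} and~\ref{gen-thm-multi-repno}.

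\textbf{Step 1: the layer $\{(u,v) : u \in V(G_1)\}$.} Fix $v \in V(G_2)$ and let $U = \{(u,v) : u \in V(G_1)\}$. Take two distinct vertices $(u_1,v),(u_2,v) \in U$ and check the three adjacency conditions of Definition~\ref{def-strong-product}.
\begin{itemize}
\item Condition~1 requires $u_1 = u_2$, so it fails.
\item Condition~3 requires $vv \in E(G_2)$, which is impossible in a simple graph, so it fails.
\item Condition~2 holds exactly when $u_1u_2 \in E(G_1)$.
\end{itemize}
Hence the map $u \mapsto (u,v)$ is an isomorphism $G_1 \cong (G_1 \boxtimes G_2)[U]$.

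\textbf{Step 2: the layer $\{(u,v) : v \in V(G_2)\}$.} Fix $u \in V(G_1)$. By the symmetric argument, only Condition~1 can hold, and it holds exactly when the second coordinates are adjacent in $G_2$. So $\{(u,v) : v \in V(G_2)\}$ induces a copy of $G_2$.

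\textbf{Step 3: conclude.} Remark~\ref{monotonic-mu-hereditary-remark} now gives $\mu(G_1) \le \mu(G_1 \boxtimes G_2)$ and $\mu(G_2) \le \mu(G_1 \boxtimes G_2)$. Taking the maximum of the two yields the claim.

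The only point needing care is the irreflexivity used to rule out the tensor-type edges (Condition~3) inside a single layer; it relies on the graphs being simple. Otherwise the argument is routine, and I expect no real obstacle.
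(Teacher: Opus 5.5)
Your proof is correct and follows essentially the same route as the paper: you exhibit the layers $V(G_1)\times\{v\}$ and $\{u\}\times V(G_2)$ as induced copies of $G_1$ and $G_2$ in $G_1 \boxtimes G_2$ and then apply Remark~\ref{monotonic-mu-hereditary-remark}. Your explicit check of the three adjacency conditions, including using simplicity to rule out the tensor-type edges inside a layer, spells out a verification the paper leaves implicit.
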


\begin{remark}
The tensor product $G_{1} \times G_{2}$ does not, in general, contain either $G_{1}$ or $G_{2}$ as an induced subgraph. Consequently, the structural argument used to obtain the lower bound in the case of the strong product (Observation~\ref{strong-prod-lower-bound}) does not apply to the tensor product, and an analogous general lower bound cannot be derived by the same method.
\end{remark}

\section{Concluding remarks}
\label{sec:conclusion}

In this paper, we investigated the multi-word-representation number, $\mu(G)$, for graphs generated by six fundamental graph products: the Cartesian, rooted, lexicographic, corona, tensor, and strong products. We determined the exact value of $\mu$ for the Cartesian and rooted products. For the lexicographic and corona products, we established general upper and lower bounds and identified sufficient conditions under which these bounds collapse to exact evaluations. Notably, we proved that for the lexicographic product of any two minimal non-word-representable graphs, the multi-word-representation number is at most $3$. For the tensor and strong products, we established logarithmic bounds in terms of the chromatic numbers of the factor graphs, and observed that the tensor product of two graphs is word-representable if either of the factors is $3$-colorable.

Furthermore, we explored the multi-word-representation number for the lexicographic powers of a graph. For any integer $k \ge 2$, we established upper bounds for $\mu(G^{[k]})$ and characterized the word-representability of $G^{[k]}$. As a primary application, we utilized these lexicographic powers to investigate the extremal function $\tau(n)$, which denotes the largest integer such that every graph on $n$ vertices is guaranteed to contain a word-representable induced subgraph of at least that size. While $\tau(n)$ was introduced and evaluated only for small orders $n \le 9$ in~\cite{multi-word-rep}, this work provides the first general bounds on its asymptotic behavior. By constructing a family of extremal graphs via the $k$-th lexicographic power of a fixed base graph $H$ (Figure~\ref{fig:tau8neq7}) on $8$ vertices, we proved that the maximum size of a word-representable induced subgraph in this $8^k$-vertex family is at most $6^k$. This explicit construction yields the upper bound $\tau(n) \le n^{\log_8 6 + \epsilon}$ for any $\epsilon > 0$, formally demonstrating that $\tau(n)$ grows sublinearly. This construction also highlights a sharp structural contrast between the two underlying invariants: for all $k \ge 1$, the graph $H^{[k]}$ maintains a strictly bounded multi-word-representation number of $\mu(H^{[k]}) = 2$, even though the largest word-representable induced subgraph becomes asymptotically negligible relative to the order of the entire graph. Additionally, when restricted to the class of perfect graphs, we established a polynomial lower bound for the function $\tau(n)$. Building on these asymptotic results, we proved that for every integer $n \geq 2593$, there exists a graph of order $n$ that does not admit a word-representable bipartition, thereby providing a negative answer to the Word-representable Bipartition (WB) problem for all such $n$. While the WB problem is settled for $n \le 13$  and $n \ge 2593$, the status of the problem for the range $14 \le n \le 2592$ remains open.

Finally, our findings motivate four specific questions for future investigation:
\begin{itemize}
    \item \textbf{Question~\ref{q:lex_product_gap}} seeks to determine the exact multi-word-representation number of the lexicographic product in the unresolved regime where $\mathrm{cov}_{\mathrm{comp}}(G_2) > \max\{\mu(G_1), \mu(G_2)\}$.
    
    \item \textbf{Questions~\ref{q:minimal_non_wr_cover} and~\ref{q:tightness_mu_3}} focus on minimal non-word-representable graphs. Specifically, Question~\ref{q:minimal_non_wr_cover} asks whether every minimal non-word-representable graph admits a cover number by comparability graphs of at most $2$. While it is known that there exist word-representable graphs with a cover number by comparability graphs of $\Omega(\log n)$~\cite{kenkireth2026word}, determining whether the strict condition of minimality inherently suppresses this parameter remains an intriguing open problem. 
    
    \item Closely tied to the above is \textbf{Question~\ref{q:tightness_mu_3}}, which asks whether there exist minimal non-word-representable graphs $G_1$ and $G_2$ that force the lexicographic product to attain the established upper bound of $\mu(G_1 \circ G_2) = 3$. 
    
   \item Lastly, \textbf{Question~\ref{q:comparability_covering}} asks whether every graph $G$ with $\mu(G) \ge 2$ admits a covering by $\mu(G)$ word-representable subgraphs such that at least one subgraph is a comparability graph. An affirmative answer to this would imply that the general upper bound for the corona product inherently collapses to the exact value $\max\{\mu(G_1), \mu(G_2)\}$ in all cases where $\mu(G_2) \ge 2$.
\end{itemize}

\bibliographystyle{plain}  
\bibliography{references}

\end{document}